\title{Operator level hard-to-soft transition for $\beta$-ensembles}
\author{Laure Dumaz, Yun Li and Benedek Valk\'o}
\newcommand{\Addresses}{{  
\bigskip
  \footnotesize

Laure Dumaz, \textsc{Ceremade, CNRS, UMR 7534, Universit\'e Paris-Dauphine, PSL University, 75016 Paris, France}\par\nopagebreak
  \textit{E-mail address:} \texttt{dumaz@ceremade.dauphine.fr}

  \medskip

 Yun Li, \textsc{Department of Mathematics, University of Wisconsin Madison, 480 Lincoln Drive,
Madison WI 53706}\par\nopagebreak
  \textit{E-mail address:} \texttt{li724@wisc.edu}

  \medskip

Benedek Valk\'o, \textsc{Department of Mathematics, University of Wisconsin Madison, 480 Lincoln Drive,
Madison WI 53706}\par\nopagebreak
  \textit{E-mail address:} \texttt{valko@math.wisc.edu}

}}
    \newtheorem{theorem}{Theorem}
    \newtheorem{lemma}[theorem]{Lemma}
    \newtheorem{proposition}[theorem]{Proposition}
    \newtheorem{corollary}[theorem]{Corollary}
\theoremstyle{definition} 
    \newtheorem{remark}[theorem]{Remark}
\newcommand{\eps}{\varepsilon}
\newcommand{\Z}{{\mathbb Z}}
\newcommand{\R}{{\mathbb R}}
\newcommand{\CC}{{\mathbb C}}
\newcommand{\DD}{{\mathcal D}}
\newcommand{\N}{{\mathbb N}}
\newcommand{\lstar}{{\raise-0.15ex\hbox{$\scriptstyle \ast$}}}
\theoremstyle{remark} 
\newcommand{\Airyb}{\operatorname{Airy}_{\beta}}
\newcommand{\Bessel}{\operatorname{Bessel}}
\newcommand{\Besselop}{\textsf{G}_{\beta,2a}}
\newcommand{\Airyop}{\textsf{A}_{\beta}}
\definecolor{violet}{rgb}{0.8,0,0.2}
\newcommand{\ind}{\mathbf 1}
\newcommand{\ddd}{\mathbf{d}}
\newcommand{\psid}{\psi_{\ddd}}
\newcommand{\tph}{\tilde \phi}
\newcommand{\bside}{\noindent\textbf{\benedek{Begin side computation.}}
\begin{footnotesize}}
\newcommand{\eside}{\end{footnotesize}
\noindent \textbf{\benedek{End side computation.}}}
\begin{document}
\maketitle

\begin{abstract}
The soft and hard edge scaling limits of $\beta$-ensembles can be characterized as the spectra of certain random Sturm-Liouville operators \cite{RR, RRV}. 
It has been shown that by tuning the parameter of the hard edge process one can obtain the soft edge process as a scaling limit \cite{BF, RR, RR_spike}. 
We prove that this limit can be realized on the level of the corresponding random operators. More precisely, the random operators can be coupled in a way so that the scaled versions of the hard edge operators converge to the soft edge operator a.s.~in the norm resolvent sense. 
\end{abstract}

\section{Introduction}

The size $n$ Laguerre $\beta$-ensemble is a two-parameter family of distributions on $\R_+^n$ with density function 
\begin{align}\label{Lag_pdf}
p_{n, \beta, a}(\lambda_1, \dots, \lambda_n)=\frac{1}{Z_{n,\beta, a} }\prod_{j<k} |\lambda_j-\lambda_k|^{\beta} \prod_{k=1}^n \lambda_k^{\frac{\beta}{2}(a+1)-1} e^{- \frac{\beta}{2} \lambda_k}\,.
\end{align}
The parameters satisfy $\beta>0$ and $a>-1$, and  $Z_{n,\beta, a}$ is an explicitly computable normalizing constant. This density corresponds to the Gibbs measure of $n$ positively charged particles living on the positive half-line with a log-Gamma potential. For $\beta=1, 2$ or $4$ and $a\in \Z_{\ge 0}$, the density (\ref{Lag_pdf}) is also the joint eigenvalue distribution for an $n\times n$ Wishart matrix with real, complex or quaternion ingredients, respectively. These are classical random matrix ensembles of the form $MM^\dag$ where $M$ is an $n\times (n+a)$ dimensional matrix with i.i.d.~standard real/complex/quaternion gaussian entries. Notice that the matrix $(n+a)^{-1}\,MM^\dag$ is the correlation matrix of $n$ independent individuals whose $n+a$ characteristics are i.i.d. standard Gaussians.

When $n+a$ is of the same order as $n$, the macroscopic behavior of this ensemble is described by the famous Marchenko-Pastur limit law. Fix $\beta >0$ and let $a_n>-1, n\ge 1$ be a sequence such that $\lim_{n\to \infty} \frac{n+a_n}{n}=\gamma\in [1,\infty)$ exists. Denote by $\Lambda_{n, \beta, a_n}=(\lambda_{1,n}, \dots, \lambda_{n,n})$ a size $n$ Laguerre $\beta$-ensemble with parameter $a_n$, and consider the scaled empirical spectral measure $\nu_n:=\frac{1}{n} \sum_{k=1}^n \delta_{\lambda_{k,n}/n}$. The Marchenko-Pastur theorem (\cite{MP}, \cite{ForBook}) states that the sequence of random probability measures $\nu_n, n\ge 1$ converges in distribution a.s.~to a deterministic measure with density given by
\begin{align}\label{MP_dist}
\sigma_\gamma(x)=\frac{\sqrt{(x-b_{-})(b_+-x)}}{2\pi x} \ind_{[b_{-},b_{+}]}(x), \qquad b_{\pm}=b_{\pm}(\gamma)=(\sqrt{\gamma}\pm 1)^2.
\end{align}
Note that in the case $\gamma=1$, the density becomes $\frac{\sqrt{x(4-x)}}{2\pi x} \ind_{[0,4]}(x)$. 

The microscopic behavior of the Laguerre ensemble can be described by the large $n$ limit of the point process $c_n(\Lambda_{n,\beta, a_n}-d_n)$ where $d_n$ is the centering point and $c_n$ is the appropriate scaling parameter.
In order to get a meaningful point process limit, the scaling parameter $c_n$ would need to be chosen so that it is 
  roughly the inverse of the average spacing between the particles near $d_n$. 
From now on, we will focus on the \emph{lower edge behavior} i.e. the case $d_n := b_-$. 
(See  \cite{JV}  and \cite{RRV} for the bulk and upper edge behavior.)

The distribution of the limiting point process  depends on the asymptotic behavior of the sequence $a_n$.
If $a_n=a>-1$ does not depend on $n$, then Ram\'irez and Rider \cite{RR} showed that the scaling limit of $n \Lambda_{n,\beta,a}$ exists, and gave an explicit description of the limiting point process. This is called the hard edge  scaling limit. 
\begin{theorem}[Hard edge limit of the Laguerre ensemble, \cite{RR}]\label{thm:hardedge}
Fix $\beta>0$ and $a>-1$, and let $ \Lambda_{n,\beta, a}$  be a size $n$ Laguerre $\beta$-ensemble with  parameter $a$. Then the sequence $n \Lambda_{n,\beta, a}$ converges in distribution to a point process $\Bessel_{\beta,a}$ as $n\to \infty$. The $\Bessel_{\beta,a}$ process has the same distribution as the a.s.~discrete spectrum of the random differential operator
\begin{align}\label{hardedgeop}
&\qquad \mathfrak{G}_{\beta,a}=-\frac{1}{m(x)} \frac{d}{dx}\left(\frac{1}{s(x)} \frac{d}{dx}\, \cdot\,  \right)\,,\\[5pt]
\label{ms}
m(x)=m_{a}(x)&=e^{-(a+1)x-\frac{2}{\sqrt{\beta}} B_a(x)}\,, \qquad s(x)=s_a(x)=e^{ ax +\frac{2}{\sqrt{\beta}} B_a(x)}.
\end{align} 
Here $B_a$ is a standard Brownian motion, and the operator $\mathfrak{G}_{\beta,a}$ is 
defined on a subset of $L^2(\R_+,m)$ with Dirichlet boundary condition at $0$ and Neumann at infinity.
\end{theorem}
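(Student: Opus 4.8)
I would follow Ram\'irez and Rider \cite{RR}: realize the ensemble through its bidiagonal matrix model, recognize the eigenvalue equation as a random discrete Sturm--Liouville (string) problem, and transfer the limit to the level of the operators.

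\emph{Reduction to a difference operator.} By the Dumitriu--Edelman tridiagonalization, the density $(\ref{Lag_pdf})$ is the eigenvalue law of $B_nB_n^{\mathsf T}$, where $B_n$ is an $n\times n$ lower bidiagonal matrix with independent $\chi$-distributed entries whose shape parameters vary linearly with the index. Writing $B_nB_n^{\mathsf T}v=\lambda v$ as a two-term recursion and performing the similarity transformation that symmetrizes it, one rewrites the eigenvalue problem, after the hard-edge scaling (a rescaling of $\lambda$ and a reparametrization of the index under which the limiting spatial variable lives on $\R_+$), as $-\tfrac1{m_n}\nabla^*\!\big(\tfrac1{s_n}\nabla\,\cdot\big)\phi=\lambda\phi$ with a Dirichlet condition at the hard-edge end and a free (Neumann) condition at the far end; here $\nabla,\nabla^*$ are adjoint discrete derivatives and $m_n,s_n$ are explicit running products built from reciprocal squares of the $\chi$'s. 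Thus $n\Lambda_{n,\beta,a}$ is the spectrum of a random discrete operator $\mathcal G_n$.

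\emph{The limit operator and the coupling.} Using $\ev[\log\chi^2_k]=\log k+O(1/k)$ and $\operatorname{Var}(\log\chi^2_k)\sim 2/k$, one checks that under the reparametrization the increments of $\log s_n$ and of $\log m_n$ have drift and quadratic variation matching those of $ax+\tfrac2{\sqrt\beta}B_a(x)$ and $-(a+1)x-\tfrac2{\sqrt\beta}B_a(x)$; so by Donsker's theorem — or a KMT strong approximation for the a.s.\ version — one may put the $\chi$'s and a Brownian motion $B_a$ on one probability space with $s_n\to s_a$ and $m_n\to m_a$ locally uniformly. On the analytic side, $\mathfrak G_{\beta,a}$ with Dirichlet at $0$ and Neumann at $\infty$ is self-adjoint and bounded below (at $0$ the equation is limit-circle and the Dirichlet condition fixes the self-adjoint extension; at $\infty$ the fast decay of $m_a$ controls the endpoint), and its inverse is the integral operator on $L^2(\R_+,m_a)$ with kernel $S_a(x\wedge t)$, $S_a(x):=\int_0^x s_a$. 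Since a.s.\ $m_a(x)=e^{-(a+1)x+o(x)}$ and $S_a(x)=e^{a_+ x+o(x)}$ with $a_+=\max(a,0)$, the double integral $\iint S_a(x\wedge t)^2 m_a(x)m_a(t)\,dx\,dt$ is a.s.\ finite, so $\mathfrak G_{\beta,a}^{-1}$ is Hilbert--Schmidt and $\mathfrak G_{\beta,a}$ has a.s.\ discrete spectrum.

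\emph{Operator convergence and conclusion.} The resolvent of $\mathcal G_n$ is the analogous summation operator with kernel $S_n(x\wedge t)$, so $S_n\to S_a$ gives pointwise convergence of the kernels; combined with a tail bound on $m_n$ uniform in $n$ — which forbids spectral mass from escaping to $+\infty$ — this upgrades to convergence of the resolvents in Hilbert--Schmidt, hence operator, norm. Norm resolvent convergence of self-adjoint operators bounded below and with compact resolvent forces convergence of the individual eigenvalues, so $n\Lambda_{n,\beta,a}$ converges in distribution to the spectrum of $\mathfrak G_{\beta,a}$, i.e.\ to $\Bessel_{\beta,a}$. I expect the real work to be in the last two points: Hilbert--Schmidt compactness of $\mathfrak G_{\beta,a}^{-1}$ despite the rough Brownian coefficients, and — the genuinely delicate step — the uniform-in-$n$ tail estimate that turns local convergence of the coefficients into norm resolvent convergence; the endpoint classification at $0$ and $\infty$ likewise hinges on the precise growth and decay of $m_a$ and $s_a$.
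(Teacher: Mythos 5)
This theorem is not proved in the paper at all: it is quoted from Ram\'irez--Rider \cite{RR}, and the only internal discussion is the brief use of its consequences in Proposition \ref{pr:hardSL}. Your sketch follows essentially the same route as the original proof in \cite{RR} (Dumitriu--Edelman bidiagonal model, discrete string/Sturm--Liouville reformulation after hard-edge scaling, a.s.\ coupling of the $\chi$ variables with the Brownian motion, Hilbert--Schmidt convergence of the inverse integral operators with kernels $S_n(x\wedge t)\to S_a(x\wedge t)$, and eigenvalue convergence from compact-operator convergence), so there is nothing genuinely different to compare. One caveat: your parenthetical treatment of the endpoint at $+\infty$ glosses over the dependence on the sign of $a$ --- for $a>0$ the operator is limit point at infinity and the Neumann condition is vacuous (this is exactly the point of the erratum \cite{RR_err}, which the paper invokes), whereas for $-1<a\le 0$ the endpoint is limit circle (indeed $u_{\ddd}=S_a\in L^2(\R_+,m_a)$ there) and the Neumann condition at infinity is genuinely needed to single out the self-adjoint extension whose Green kernel is $S_a(x\wedge t)$; any complete write-up would have to make this case distinction explicit.
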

We will come back to the precise definition of $\mathfrak{G}_{\beta,a}$ in Section \ref{s:SL}. Let us just mention that since the functions $s, m$ are a.s.~continuous, this differential operator fits into the  framework of classical Sturm-Liouville operators. 

If the sequence $a_n, n\ge1$ goes to infinity with at least a constant speed then the Marchenko-Pastur theorem and the expression of the limiting measure (\ref{MP_dist}) suggest a different scaling than the one seen in the hard edge case. 
%
%
%
%
This is called the soft edge scaling scaling limit. The description of the limiting point process follows from the work of \cite{RRV}.\begin{theorem}[Soft edge limit, \cite{RRV}]\label{thm:softedge}
Fix $\beta>0$ and suppose that the sequence $a_n, n\ge1$ satisfies $\liminf_{n\to \infty} a_n/n>0$. Then there is a point process $\Airyb$ so that the following limit in distribution holds as $n\to \infty$:
\[
\frac{((n+a_n) n)^{1/6}}{(\sqrt{n+a_n}-\sqrt{n})^{4/3}}(\Lambda_{n,\beta,a_n}-(\sqrt{n+a_n}-\sqrt{n})^2)\Rightarrow \Airyb.
\]
The point process $\Airyb$ has the same distribution as the a.s.~discrete spectrum of the random differential operator 
\begin{align}\label{Airyop}
\Airyop=-\frac{d^2}{dx^2}+x+\frac{2}{\sqrt{\beta}} B'
\end{align}
defined on a subset of $L^2(\R_+)$ with Dirichlet boundary conditions at 0. Here  $B'$ is the standard white noise on $\R_+$.
\end{theorem}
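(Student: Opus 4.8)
This is a cited result, so we only sketch a proof, running along the lines of the stochastic Airy operator construction of Ram\'irez, Rider and Vir\'ag \cite{RRV}. The first step is to pass to a tridiagonal matrix model: by the bidiagonal construction of Dumitriu and Edelman, $\Lambda_{n,\beta,a_n}$ has the same law as the spectrum of $L_n := B_n B_n^{t}$, where $B_n$ is the $n\times n$ lower bidiagonal matrix with independent (suitably normalized) $\chi$-distributed entries, the diagonal one in row $k$ having about $\beta(n+a_n-k)$ degrees of freedom and the subdiagonal one about $\beta(n-k)$. A short computation with the means of these $\chi$'s shows that the smallest eigenvalues of $L_n$ come from the top-left corner of the matrix, where the local density of states has a soft lower edge at $(\sqrt{n+a_n}-\sqrt n)^2$; under the hypothesis $\liminf a_n/n>0$ this soft edge stays bounded away from the hard wall at $0$, so the origin plays no role.

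The second step identifies the scaling limit of $L_n$. Rewrite $L_n\psi=\lambda\psi$ as a three-term recursion for $\psi_1,\dots,\psi_n$, apply the affine change of spectral variable in the statement --- centering at $(\sqrt{n+a_n}-\sqrt n)^2$ and dilating by $((n+a_n)n)^{1/6}(\sqrt{n+a_n}-\sqrt n)^{-4/3}$ --- and simultaneously rescale the index $k$ by a mesh $\Delta_n\to 0$, of order $n^{-1/3}$ up to constants depending on $a_n$, so that the active region of the low-lying eigenfunctions has $O(1)$ continuum length while the full interval $[0,n\Delta_n]$ grows to fill $\R_+$. In these coordinates $L_n$ splits into a discrete second-difference part and a multiplication (potential) part; Taylor expanding the means of the $\chi$'s produces the confining main term $x$ of the potential, while its remaining random part is assembled from the fluctuations of the independent $\chi$ entries about their means. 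The partial sums of those fluctuations along the rescaled lattice converge, by Donsker's invariance principle, to a Brownian motion --- this is the origin of the white-noise potential $\tfrac{2}{\sqrt\beta}B'$ in $\Airyop$, with the constant pinned down by the variances of the $\chi$'s.

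The third step turns this heuristic into convergence of the spectrum, working with quadratic forms as in \cite{RRV}. One first verifies that $\Airyop$, defined through the bilinear form
\[
\langle f,\Airyop f\rangle \;=\; \int_0^\infty \big(f'(x)^2+x\,f(x)^2\big)\,dx \;+\; \tfrac{2}{\sqrt\beta}\int_0^\infty f(x)^2\,dB(x)
\]
on the form domain $\{f:\ f(0)=0,\ \int_0^\infty (f'(x)^2+(1+x)f(x)^2)\,dx<\infty\}$, is almost surely a self-adjoint operator, bounded below, with compact resolvent --- hence with a.s.\ discrete spectrum --- because this form domain embeds compactly into $L^2(\R_+)$ thanks to the confining weight $1+x$, and the white-noise term is a.s.\ a form-bounded perturbation of relative bound $0$ (integrate $\int f^2\,dB$ by parts and use an a.s.\ growth bound on $B$). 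One then feeds in the two quantitative inputs of the RRV scheme: (i) a uniform lower bound by which the discrete Dirichlet form of the rescaled, shifted $L_n$ dominates a fixed fraction of the continuum form minus a controlled error, together with tightness of the near-ground-state eigenvectors (no leakage of $L^2$ mass to the far end of the interval, and no oscillation on the mesh scale); and (ii) convergence of the relevant bilinear forms evaluated on suitable trial functions. Together these give joint convergence in distribution of any fixed number of bottom eigenvalues of the rescaled, shifted $L_n$ to those of $\Airyop$, which is exactly the stated weak convergence to $\Airyb$.

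The main obstacle is the third step, and within it part (i): one must control the discrete operator uniformly in $n$ near the soft edge, proving a priori bounds that keep the energy and the oscillation of the low-lying eigenvectors of $L_n$ tight on the correct spatial scale --- so that in the limit no eigenvalue escapes to $\pm\infty$ and no spurious low-lying state is manufactured --- and this has to be done entirely with forms rather than pointwise quantities, since the limiting potential carries the distributional term $\tfrac{2}{\sqrt\beta}B'$, which is only meaningful after pairing with $f^2$. Reconciling the fixed Dirichlet condition at $0$ with the soft, Airy-type wall built by the growing potential at the far end is then routine. (A transfer-matrix / Riccati analysis of the recursion is an alternative route, but the variational argument above is the most direct.)
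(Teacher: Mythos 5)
The paper offers no proof of this theorem: it is quoted from \cite{RRV} (together with the bidiagonal Laguerre model tracing back to Dumitriu--Edelman), and your sketch follows exactly that route — tridiagonalization, soft-edge rescaling, and the quadratic-form/variational scheme for convergence to the stochastic Airy operator. Your outline is a faithful summary of the cited argument, so it is consistent with the paper's treatment and nothing further is needed here.
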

The precise definition of the operator $\Airyop$ will be discussed in Section \ref{s:SL}. Note that a priori it is not even clear that the operator $\Airyop$ is well-defined, due to the irregularity of the white noise term in the potential. 

It is natural to conjecture that the condition $\liminf_{n\to \infty} a_n/n>0$ in Theorem \ref{thm:softedge} could be relaxed to $\lim_{n\to \infty} a_n=\infty$, but the tools developed in \cite{RRV} do not seem to be sufficient to prove this. (See however \cite{DMT} for the treatment of the case  $\beta=2$, $a_n=c\sqrt{n}$, where the appropriate limit is proved using the determinantal structure present at $\beta=2$.) This conjecture, together with a diagonal argument, would imply the following point process level transition from the $\Bessel_{\beta,a}$ process to $\Airyb$:
\begin{align}\label{HtoS_process}
a^{-4/3}(\Bessel_{\beta,2a}-a^2)\Rightarrow \Airyb, \qquad \text{as $a\to \infty$}.
\end{align}
See \cite{BVBV} for a similar diagonal argument for the transition between the soft edge and the bulk limiting processes.

The process level limit  (\ref{HtoS_process}) is called hard to soft edge transition. It can be analyzed without considering the finite $n$ ensembles, working directly with the limiting point processes appearing in the statement. This transition was first proved in \cite{BF} for $\beta=2$ using again the determinantal structure present in this case. For general $\beta>0$, Ram\'irez and Rider \cite{RR} proved the scaling limit for the first point of the respective point processes. This result was extended in \cite{RR_spike} to a full process level limit.

In light of Theorems \ref{thm:hardedge} and \ref{thm:softedge}, the statement of (\ref{HtoS_process}) can be rewritten using the operators $\mathfrak{G}_{\beta,2a}$ and $\Airyop$ as 
 \[
 a^{-4/3}(\operatorname{spec} (\mathfrak{G}_{\beta,2a} )-a^2)\Rightarrow \operatorname{spec} (\Airyop),\]
 where $\operatorname{spec} (Q)$ denotes the spectrum of the operator $Q$. 
It is natural to ask whether it is possible to prove the corresponding limit on the level of the operators. This is the main result of our paper. Theorem \ref{thm:hardtosoft} below shows that one can realize the operator level limit as an a.s.~limit with an appropriate coupling between the Brownian motion $B_a$ of the Bessel operator \eqref{hardedgeop} and the white noise $B'$ of the Airy operator \eqref{Airyop}.

To describe our coupling, we introduce a simple transformation of $\mathfrak{G}_{\beta,2a}$. 
For $a>0$ let $\theta_a$ be the `stretching' transformation defined via
\begin{align}\label{stretch}
\left(\theta_a f\right)(x)=f(a^{2/3} x),
\end{align}
and define the following transform of the hard-edge operator corresponding to $2a$: 
\begin{align}\label{Besselop}
\Besselop= \theta_a^{-1} \left(m_{2a}^{1/2} \mathfrak{G}_{\beta,2a} m_{2a}^{-1/2}\right) \theta_a.
\end{align}
As we will see in Section \ref{s:SL}, $\Besselop$ is a self-adjoint operator with the same spectrum as $\mathfrak{G}_{\beta,2a} $, and the operators $\Airyop^{-1}$ and $(\Besselop-a^2)^{-1}$ are  Hilbert-Schmidt integral operators acting on the same space of $L^2(\R_+)$ functions. 

\begin{theorem}[Operator level hard-to-soft transition]\label{thm:hardtosoft}
Let $B'$ be  white noise on $\R_+$ and let $B$ be a Brownian motion defined as $B(x) := \int_0^x B'(y)dy$. Set $B_{2a}(x)=a^{-1/3} B(a^{2/3} x)$ for $a>0$. Consider $\Airyop$ defined as (\ref{Airyop}) using the white noise $B'$, and $\Besselop$ defined with the Brownian motion $B_{2a}$ via (\ref{hardedgeop}) and (\ref{Besselop}) for $a>0$. Then $a^{4/3}(\Besselop-a^2)^{-1}\to \Airyop^{-1}$ a.s.~in Hilbert-Schmidt norm as $a\to \infty$.
\end{theorem}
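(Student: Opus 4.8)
The plan is to pass both inverse operators through their explicit integral kernels and establish convergence of the kernels in $L^2(\R_+^2)$, since Hilbert-Schmidt norm of an integral operator equals the $L^2$ norm of its kernel. First I would record the Green's function representation of $\Airyop^{-1}$: writing the operator in Sturm-Liouville form $-\frac{d}{dx}(p\,\tfrac{d}{dx}\cdot) + q$ after the standard change of variables that removes the white noise from the potential (the Riccati/exponential-of-Brownian substitution used in \cite{RRV}), one obtains a kernel of the form $K_{\Airyop}(x,y)=\varphi(x\wedge y)\psi(x\vee y)$ built from the two canonical solutions of the homogeneous equation (the one satisfying the Dirichlet condition at $0$ and the $L^2$ one near $+\infty$). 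I would do the same for $(\Besselop-a^2)^{-1}$: unwinding the definition \eqref{Besselop}, the conjugation by $m_{2a}^{1/2}$ and the stretch $\theta_a$ turn $\mathfrak{G}_{\beta,2a}$ into a Schrödinger-type operator on $L^2(\R_+)$ whose potential, after the shift by $a^2$ and the scaling $a^{4/3}$, is a function $V_a(x)$ that I will want to compare to $x+\tfrac{2}{\sqrt\beta}B'$. The coupling in the statement, $B_{2a}(x)=a^{-1/3}B(a^{2/3}x)$, is exactly the one that makes the Brownian part of $V_a$ line up with $B$ after rescaling.

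The core analytic step is then: show that the coupled coefficients of $\Besselop$, transported to $L^2(\R_+)$ and recentered/rescaled, converge to those of $\Airyop$ in a sense strong enough to force norm-resolvent convergence. Concretely I expect to verify (i) the drift/potential terms converge: the deterministic part of $V_a(x)$ tends to $x$ uniformly on compacts with a quantitative tail bound, and the Brownian part converges because of the scaling relation for $B_{2a}$; (ii) a uniform (in large $a$) coercivity/tightness bound of the form $\Besselop - a^2 \ge c\,(\Airyop_{\mathrm{model}})$ or equivalently a uniform lower bound on the Dirichlet forms together with a uniform bound on $\|(\Besselop-a^2)^{-1}\|_{\mathrm{HS}}$, which gives compactness of the family of kernels; and (iii) identification of the limit, i.e. any subsequential limit kernel solves the resolvent equation for $\Airyop$. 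For (iii) I would use the oscillation-theory/ODE description: the homogeneous solutions of the Bessel equation (after transformation) converge locally uniformly to the Airy solutions by continuous dependence of linear ODEs on their coefficients in the coupled probability space, and one upgrades this to convergence of the $L^2$-at-infinity solution using the uniform tail estimates from (ii). Assembling (i)--(iii): $K_{\Besselop-a^2}\to K_{\Airyop}$ pointwise a.s., and a dominating function coming from the uniform estimates gives $L^2(\R_+^2)$ convergence by dominated convergence, hence the Hilbert-Schmidt convergence.

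I anticipate two places where real work is needed. The first, and I expect the main obstacle, is establishing the \textbf{uniform-in-$a$ tail and regularity estimates} on the transformed Bessel operator — one needs that, after the stretch and conjugation, the potential $V_a$ grows at least linearly at $+\infty$ with constants not degenerating as $a\to\infty$, controlling the Brownian fluctuations $B_{2a}$ (whose law is $a$-independent by the scaling, which helps) but also the interaction between the exponential weights $m_{2a},s_{2a}$ and the stretch; this is what yields both the a.s.\ finiteness of $\|(\Besselop-a^2)^{-1}\|_{\mathrm{HS}}$ uniformly and the domination for the final limit. This step is essentially a careful version of the estimates in \cite{RRV} made quantitative and uniform, plus Brownian tail estimates (laws of the iterated logarithm / reflection bounds) to handle the stochastic term. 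The second, more routine, point is the bookkeeping of the conjugation in \eqref{Besselop}: one must check that $m_{2a}^{1/2}\mathfrak{G}_{\beta,2a}m_{2a}^{-1/2}$, after $\theta_a^{-1}(\cdot)\theta_a$, is genuinely of Schrödinger form on $L^2(\R_+)$ with Dirichlet condition at $0$, that its spectrum is unchanged, and that the near-$0$ behavior of the transformed weights (recall $a>-1$ versus $2a$ in the exponents $\lambda^{\frac\beta2(a+1)-1}$) does not create a boundary-condition mismatch in the limit $a\to\infty$ — a direct computation with \eqref{ms}. Once the uniform estimates are in hand, the convergence of kernels and the conclusion follow by the dominated-convergence argument sketched above.
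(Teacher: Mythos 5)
Your outline is sound and its essential content coincides with the paper's proof: both reduce the theorem to convergence in $L^2(\R_+^2)$ of the explicit Green's kernels built from the Dirichlet solution and the $L^2$-at-infinity solution, both obtain the local part from convergence of the coupled eigenvalue equations under the scaling $B_{2a}(x)=a^{-1/3}B(a^{2/3}x)$, and you correctly identify the real work as uniform-in-$a$ control of the transformed hard-edge kernel at infinity (the paper's Section \ref{s:Step3proof} diffusion estimates on the Riccati process $p^{(2a)}$). Where you genuinely differ is the assembly: you propose pointwise a.s.\ convergence of the kernels plus an $a$-uniform $L^2(\R_+^2)$ dominating envelope, whereas the paper inserts truncated kernels $K^{(L)}_{\textsf{A}},K^{(L)}_{\textsf{G},2a}$ (inverses on $[0,L]$ with a Dirichlet condition at $L$) and splits the Hilbert-Schmidt norm into three pieces (Lemmas \ref{l:Step1}--\ref{l:Step3}). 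The truncation buys something concrete: the truncated kernels involve the solutions only on $[0,L]$, so the paper never has to prove convergence of $\tilde\phi_\infty$ to $\psi_\infty$ directly, while your route cannot avoid it (pointwise convergence of the full kernel requires it); since these functions are non-adapted --- their values depend on the whole noise path --- their convergence must itself be extracted from the Wronskian representation (\ref{wronski}) combined with the tail estimates, i.e.\ from the same work the truncation repackages, so your plan is workable but not lighter. Two smaller points you should make explicit: the ``continuous dependence on coefficients'' step concerns SDEs, not ODEs, and in the paper it is run through a stochastic-flow continuity theorem in the parameter $\eps=a^{-1/3}$ (Proposition \ref{prop:sdelim}); and one must also show that in this coupling $a^2$ is a.s.\ not an eigenvalue of $\Besselop$ for all large $a$, so that the resolvent exists at all --- in the paper this falls out of the same diffusion bounds by showing the Dirichlet solution is not in $L^2(\R_+,m_{2a})$ (Corollary \ref{prop:eigenvalue}), and your uniform estimates would need to deliver the analogous statement.
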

We expect that with a more careful application of our methods one could also get estimates on the speed of convergence in our coupling. See Remark \ref{rmk:convergence rate} in Section \ref{s:Step3proof}.

The theorem implies that $a^{-4/3}(\Besselop-a^2)\to \Airyop$ a.s.~in norm resolvent sense from which the process level transition  $a^{-4/3}(\operatorname{spec} (\mathfrak{G}_{\beta,2a} )-a^2)\Rightarrow \operatorname{spec} (\Airyop)$, and therefore the limit (\ref{HtoS_process}) follow. The coupling of the operators produces a coupling of the point processes in a way that almost surely the points in the scaled hard edge processes converge to the points in the soft edge point process. More precisely, a version of the Hoffman-Wielandt inequality (see e.g.~\cite{BhatiaElsner})  shows that if we denote the ordered points in the scaled hard edge process $a^{-4/3}(\Bessel_{\beta,2a}-a^2)$ by $\lambda_{k,2a}, k\ge 0$, and the ones in the soft edge process $\Airyb$ by $\lambda_{k}, k\ge 0$, then in the coupling of Theorem \ref{thm:hardtosoft} we have a.s.
\[
\lim_{a\to \infty} \sum_{k=0}^\infty \left|\lambda_k^{-1}-\lambda_{k,2a}^{-1}  \right|^2=0\;.
\]
Moreover, as the spectrum of the operators are discrete, and each eigenvalue has multiplicity $1$, the a.s.~norm resolvent convergence also implies the a.s.~convergence of the respective normalized eigenfunctions in $L^2$.

\medskip

The structure of the rest of the paper is as follows. 
In Section \ref{s:SL}  we show how one can describe the appearing differential operators using the generalized Sturm-Liouville theory, show that $\Airyop^{-1}$ and $(\Besselop-a^2)^{-1}$ are Hilbert-Schmidt integral operators, and  describe their kernels in terms of certain diffusions. Section \ref{s:mainsteps} outlines the main steps of the proof of the main Theorem \ref{thm:hardtosoft}. Our proof uses the approximation of the integral operators by their truncated version. We state the convergences of the truncated operators towards their full operator as well as the convergence of the truncated hard edge integral operators to the truncated soft edge integral operator in several lemmas whose proofs are postponed to later sections. Section \ref{s:Step1proof} estimates the truncation error of the soft edge integral operator. Section \ref{s:Step2proof} shows that the truncated hard edge integral operators converge to the truncated soft edge integral operator by proving that the integral kernels converge uniformly on compacts with probability one. Section \ref{s:Step3proof} describes the asymptotic behavior of the diffusions connected to the operator $ \Besselop$ and provides the results needed to estimate the truncation error for the hard edge integral operators. Finally, the final section gathers the proof of some technical lemmas needed for the results of Sections \ref{s:Step1proof} and \ref{s:Step3proof}. 

%
%
\section{The operators $\Airyop$ and $\mathfrak{G}_{\beta,2a}$ as generalized Sturm-Liouville operators}\label{s:SL}

This section briefly introduces the background for the differential operators appearing in this work, and shows how it can be used to describe the random differential operators $ \mathfrak{G}_{\beta,2a}, \Besselop, \Airyop$ and their inverses. We use the classical theory discussed in \cite{Weidmann} and Chapter 9 of \cite{Teschl}. 

\subsection{Generalized Sturm-Liouville operators}

We consider generalized Sturm-Liouville (S-L) operators of the form
 \begin{align}\label{SL}
\tau u(x)= \frac{1}{r(x)}\left(-(p_1(x)u'(x)-q_0(x) u(x))'-q_0(x)u'(x)+p_0(x) u(x)\right),
 \end{align}
 where $u$ is a real valued function on $[0,L]$ for some $L >0$ or on $\R_+$ (which we consider to be the $L=\infty$ case in the following). We assume that the real functions $p_0, p_1, q_0, r$  are continuous on $[0,\infty)$ and $r(x), p_1(x)>0$ for $x\ge 0$.

The operation $\tau u$ is well-defined if both $u$ and $p_1u'-q_0 u$ are absolutely continuous on $[0,L]$.  From the standard theory of differential equations we have that for any $\lambda\in \CC$ the differential equation $\tau u=\lambda u$ has a unique differentiable solution on $[0,L]$ with initial conditions $u(0)=c_0, u'(0)=c_1$. We  note that  if $f_1, f_2$ are both solutions of $\tau f=\lambda f$ then integration by parts shows that the Wronskian $p_1 (f_1 f_2'-f_1' f_2)$ is constant on $\R_+$.

We consider differential operators satisfying the following three assumptions: 
\begin{enumerate}
\item[(A1)] The solution $u_{\ddd}$ of the equation $\tau u_{\ddd}=0$ with Dirichlet initial condition $u_{\ddd}(0)=0$, $u_{\ddd}'(0)=1$ is not in $L^2(\R_+, r)$, i.e.~$\int_0^\infty u_{\ddd}^2(x) r(x) dx=\infty$.

\item[(A2)] There is a unique solution $u_\infty$ of the equation $\tau u_\infty=0$, with initial condition $u_\infty(0)=1$ that is in $L^2(\R_+,r)$. 
\item[(A3)] With $u_{\ddd}, u_\infty$ defined from (A1), (A2), we have $\int_0^\infty \int_0^x  u_\infty(x)^2 u_{\ddd}(y)^2 r(x) r(y)\, dy dx<\infty$.
\end{enumerate}

Under these assumptions, the operator $\tau$ can be made self-adjoint on an appropriate subset of $L^2([0,L],r)$ or $L^2(\R_+,r)$. We introduce 
\begin{align*}
\DD_L=\left\{u\in L^2([0,L],r):  \tau u\in L^2([0,L],r), \, u, p_1u'-q_0 u\in \text{AC}([0,L])\right\},
\end{align*}
and we drop the subscript $L$ for $L=\infty$. Here $\text{AC}([0,L])$ is the set of absolutely continuous real functions on $[0,L]$. 

The continuity of the functions $p_0$, $p_1$, $q_0$ and $r$ implies that the operator $\tau$ is regular at $0$ and at any finite $L$ and therefore is limit circle at those points.
The condition (A1) implies that the operator $\tau$ is limit point at $+\infty$ thanks to the Weyl's alternative theorem. Conditions (A2) and (A3) ensure that the inverse and the resolvent are Hilbert Schmidt operators.

The following propositions summarize the basic properties of generalized Sturm-Liouville differential operators satisfying conditions (A1)-(A3). 

\begin{proposition}[Self-adjoint version of $\tau$]\label{pr:SL1}
Assume that $\tau$ is of the form (\ref{SL}) and that it satisfies the condition (A1-A3), 
and let $L\in (0,\infty]$. Then there is a self-adjoint version of the operator on $[0,L]$ with Dirichlet boundary conditions on the domain 
\[
\DD_{L,0}=\DD_L\cap \{u: u(0)=0, u(L)=0\},
\]
 where the end condition $u(L)=0$ is dropped in the case $L=\infty$. We denote this self-adjoint operator by $\tau_L$. 
\end{proposition}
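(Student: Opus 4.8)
The plan is to recognize $\tau$ as a (mildly generalized) Sturm--Liouville operator that is regular at the left endpoint and, thanks to (A1), limit point at $+\infty$, and then to invoke the standard Weyl--von Neumann classification of self-adjoint realizations; assumptions (A2)--(A3) play no role here (they will be needed only for the Hilbert--Schmidt statement). The first step is to record the Lagrange identity adapted to the form \eqref{SL}. Writing the quasi-derivative $u^{[1]}:=p_1u'-q_0u$, so that $r\,\tau u=-(u^{[1]})'-q_0u'+p_0u$, a direct integration by parts gives, for any $u,v$ on a subinterval $[\alpha,\beta]\subset[0,L]$ with $u,v,u^{[1]},v^{[1]}$ absolutely continuous,
\begin{align*}
\int_\alpha^\beta\big((\tau u)\,\bar v-u\,\overline{\tau v}\big)\,r\,dx=[u,v]\Big|_\alpha^\beta,\qquad [u,v]:=u\,\overline{v^{[1]}}-u^{[1]}\,\bar v.
\end{align*}
This exhibits $\tau$ as formally symmetric on $L^2([0,L],r)$, identifies its maximal domain with $\DD_L$, and isolates the boundary form $[u,v]$ that must be annihilated to obtain a symmetric, and ultimately self-adjoint, restriction.

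Next I would classify the endpoints. Because $p_0,p_1,q_0,r$ are continuous on $[0,\infty)$ with $r,p_1>0$ there, the endpoint $0$ — and any finite $L$ — is regular: $1/p_1$, $q_0/p_1$, $p_0$ and $r$ are integrable near it, and the initial value problem $\tau u=\lambda u$, $u(0)=c_0$, $u^{[1]}(0)=c_1$ has a unique solution, entire in $\lambda$; in particular $\tau$ is limit circle there. At $+\infty$, assumption (A1) exhibits a solution of $\tau u=0$ that is not in $L^2(\R_+,r)$, so by Weyl's alternative — if every solution at one value of the spectral parameter were square integrable near an endpoint, then the same would hold at every value — $\tau$ is limit point at $+\infty$.

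Then I would quote the standard construction of self-adjoint realizations for such operators (see \cite[Ch.~9]{Teschl} and \cite{Weidmann}). For $L=\infty$ the minimal operator associated with $\tau$ has deficiency indices $(1,1)$; every $u\in\DD$ automatically satisfies $\lim_{x\to\infty}[u,v](x)=0$ for all $v\in\DD$ by the limit-point property, so the boundary form in the Lagrange identity reduces to its value at $0$, and the self-adjoint extensions are precisely the restrictions of $\tau$ to $\{u\in\DD:\cos\alpha\,u(0)+\sin\alpha\,u^{[1]}(0)=0\}$, $\alpha\in[0,\pi)$. Taking $\alpha=0$ gives the Dirichlet realization with domain $\DD\cap\{u(0)=0\}=\DD_{L,0}$. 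For $L<\infty$ both endpoints are regular, the deficiency indices are $(2,2)$, and imposing $u(0)=0$ together with $u(L)=0$ annihilates $[u,v]\big|_0^L$ for all $u,v$ in the resulting domain; the Lagrange identity then yields symmetry, and the standard count (one scalar constraint at each regular endpoint, matching the deficiency) yields self-adjointness, with domain $\DD_L\cap\{u(0)=u(L)=0\}=\DD_{L,0}$.

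I do not expect a genuine obstacle: the whole statement is classical Sturm--Liouville theory for an operator regular at one end and limit point (or regular) at the other. The only point to keep straight is that the generalized form \eqref{SL} carries a cross term $q_0$, so all boundary-term manipulations and the absolute-continuity requirement in the definition of $\DD_L$ must be phrased in terms of the quasi-derivative $u^{[1]}=p_1u'-q_0u$ rather than $u'$; once this bookkeeping is in place, the arguments of \cite{Weidmann,Teschl} apply without change.
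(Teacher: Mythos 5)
Your argument is correct and follows essentially the same route as the paper, which likewise observes that continuity of the coefficients makes $0$ and any finite $L$ regular (hence limit circle), that (A1) with Weyl's alternative gives limit point at $+\infty$, and then cites the standard Sturm--Liouville construction of the Dirichlet self-adjoint realization from \cite{Weidmann} and \cite{Teschl}. Your added detail (Lagrange identity in terms of the quasi-derivative, deficiency-index count) and your remark that (A2)--(A3) are only needed for the Hilbert--Schmidt statements are both consistent with the paper's treatment.
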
 
 
 \begin{proposition}[Inverse as an integral operator]\label{pr:SL2}
Consider the operator $\tau_L$ from   Proposition \ref{pr:SL1}. If $L$ is finite then assume  that $u_{\ddd}(L)\neq 0$ (i.e.~that $0$ is not an eigenvalue of $\tau_L$). 
Then the  inverse $\tau_L^{-1}$ is an integral operator of the  form $\tau_L^{-1} f(x)=\int_0^L K^{(L)}(x,y) f(y) r(y) dy$ on $L^2([0,L],r)$ with
\begin{align}\label{HS_K}
K^{(L)}(x,y)=\frac{1}{p_1(0)} \left(u_L(x) u_{\ddd}(y) \ind(x\ge y)+u_{\ddd}(x) u_L(y) \ind(x<y)    \right).
\end{align}
Here $u_{\ddd}$ is defined in (A1). If $L=\infty$ then $u_L$ is $u_\infty$ from (A2), and in the case $L<\infty$ the function $u_L$ is defined as 
the solution of $\tau u_L=0$ with $u_L(0)=1, u_L(L)=0$. 
The inverse operator $\tau_L^{-1}$ is a Hilbert-Schmidt operator in $L^2([0,L],r)$, and it has a bounded pure point spectrum.  
\end{proposition}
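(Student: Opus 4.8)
The plan is to exhibit the Green's function of $\tau_L$ explicitly via the classical variation-of-parameters construction and to read off the three assertions from it, following the Sturm--Liouville theory of \cite{Weidmann} and Chapter~9 of \cite{Teschl} adapted to the slightly more general form (\ref{SL}). First I would note that $0$ is not an eigenvalue of the self-adjoint operator $\tau_L$ from Proposition \ref{pr:SL1}, so that $\tau_L^{-1}$ is well defined: any eigenfunction at $0$ solves $\tau u=0$ with the left Dirichlet condition $u(0)=0$, hence is a multiple of $u_{\ddd}$, which for finite $L$ is excluded by $u_{\ddd}(L)\neq 0$ and for $L=\infty$ by $u_{\ddd}\notin L^2(\R_+,r)$ from (A1). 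Next I would construct the companion solution $u_L$: for finite $L$ the solutions of $\tau u=0$ with $u(0)=1$ form a one-parameter family (adding multiples of $u_{\ddd}$ to a fixed one), and since $u_{\ddd}(L)\neq0$ there is a unique member with $u_L(L)=0$; for $L=\infty$ one takes $u_L=u_\infty$ from (A2). In either case $u_{\ddd}$ and $u_L$ are linearly independent since they differ at $0$, so the modified Wronskian $p_1(u_{\ddd}u_L'-u_{\ddd}'u_L)$ is a nonzero constant, equal to $-p_1(0)$ by evaluation at $0$ — exactly the normalizing factor in (\ref{HS_K}).

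Next, for $f\in L^2([0,L],r)$ I would set
\[
u(x)=\int_0^L K^{(L)}(x,y)f(y)r(y)\,dy=\frac{1}{p_1(0)}\left(u_L(x)\int_0^x u_{\ddd}(y)f(y)r(y)\,dy+u_{\ddd}(x)\int_x^L u_L(y)f(y)r(y)\,dy\right),
\]
and verify that $u=\tau_L^{-1}f$ by checking three points: (i) $u$ and its quasi-derivative $p_1u'-q_0u$ are absolutely continuous, the boundary terms produced by differentiating the two integrals at the moving endpoint cancelling each other; (ii) $\tau u=f$, the one surviving boundary term carrying the constant Wronskian, which after division by $p_1(0)$ produces exactly $f$; and (iii) $u\in\DD_{L,0}$, namely $u(0)=0$ because $u_{\ddd}(0)=0$, $u(L)=0$ for finite $L$ because $u_L(L)=0$, and $u\in L^2([0,L],r)$ — the last of these automatic for finite $L$ and, for $L=\infty$, a consequence of the Hilbert--Schmidt estimate below, which already makes the integral operator bounded on $L^2(\R_+,r)$. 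Together with the injectivity from the first step this shows the integral operator with kernel $K^{(L)}$ is a two-sided inverse of $\tau_L$. I expect this middle step — checking that the explicit kernel really inverts $\tau_L$ in the present generality, with the $q_0$ terms and the singular endpoint at $L=\infty$ — to be the only part requiring genuine care, although it is entirely classical.

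Finally, I would establish the Hilbert--Schmidt property by bounding $\iint_{[0,L]^2}K^{(L)}(x,y)^2r(x)r(y)\,dx\,dy$. For finite $L$ this is finite because $K^{(L)}$ is continuous on the compact square $[0,L]^2$. For $L=\infty$, using that $K^{(\infty)}$ is symmetric and equals $\frac{1}{p_1(0)}u_\infty(x)u_{\ddd}(y)$ on $\{x\ge y\}$, the double integral equals $\frac{2}{p_1(0)^2}\int_0^\infty\int_0^x u_\infty(x)^2u_{\ddd}(y)^2 r(x)r(y)\,dy\,dx$, finite by (A3). Hence $\tau_L^{-1}$ is Hilbert--Schmidt and in particular compact; since $\tau_L$ is self-adjoint with $0$ in its resolvent set, $\tau_L^{-1}$ is bounded and self-adjoint, so the spectral theorem for compact self-adjoint operators gives an orthonormal eigenbasis with real eigenvalues accumulating only at $0$. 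In particular the spectrum of $\tau_L^{-1}$ is bounded and pure point.
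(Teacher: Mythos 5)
Your proof is correct and is essentially the argument the paper relies on: the paper simply cites the classical Sturm--Liouville theory (Weidmann; Teschl, Theorems 9.6 and 9.7), and your variation-of-parameters construction of the Green's function, with the quasi-derivative Wronskian $p_1(u_{\ddd}u_L'-u_{\ddd}'u_L)\equiv -p_1(0)$ taking care of the $q_0\neq 0$ generalization and (A1)--(A3) supplying injectivity, the existence of $u_\infty$, and the Hilbert--Schmidt bound, is exactly that standard argument written out. The concluding step (compact self-adjoint operator, hence bounded pure point spectrum) likewise matches the intended route, so no changes are needed.
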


\begin{proposition}[Resolvent as an integral operator]\label{pr:SL3}
Consider $\tau_L$ from Proposition \ref{pr:SL1}, and assume that a given $\lambda\in \R$  is not an eigenvalue of $\tau_L$. Then the resolvent $(\tau_L-\lambda)^{-1}$ is  a Hilbert-Schmidt integral operator of the same form as $K^{(L)}$ from (\ref{HS_K}), where now $u_{\ddd}, u_L$ are the appropriate solutions of $\tau u=\lambda u$ with the respective boundary conditions. For $L=\infty$ the function $u_L=u_\infty$ is the unique solution of $\tau u_\infty=\lambda u_\infty$ with $u_\infty(0)=1$ and $u_\infty\in L^2(\R_+,r)$.
\end{proposition}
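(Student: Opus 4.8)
\textbf{Proof proposal for Proposition \ref{pr:SL3}.}

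The plan is to reduce the resolvent statement to Proposition \ref{pr:SL2} by a simple shift of the operator. Fix $\lambda\in\R$ that is not an eigenvalue of $\tau_L$. The key observation is that the S-L operation $\tau$ of the form \eqref{SL} has the property that $\tilde\tau u := \tau u - \lambda u$ is again a generalized S-L operation of the same form: one only needs to replace the coefficient $p_0(x)$ by $p_0(x)-\lambda r(x)$, while $p_1, q_0, r$ are unchanged. Since $r$ is continuous, $p_0-\lambda r$ is still continuous, so $\tilde\tau$ fits the framework of \eqref{SL} with the same $r$. Moreover $(\tau_L-\lambda)^{-1}=\tilde\tau_L^{-1}$ as operators on $L^2([0,L],r)$, provided $\tilde\tau$ satisfies the same structural hypotheses and $0$ is not an eigenvalue of $\tilde\tau_L$ — but the latter is exactly the assumption that $\lambda$ is not an eigenvalue of $\tau_L$.

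Concretely, I would first verify that $\tilde\tau$ still satisfies (A1)-(A3). For (A1): the solution $\tilde u_{\ddd}$ of $\tilde\tau\tilde u_{\ddd}=0$ with $\tilde u_{\ddd}(0)=0,\ \tilde u_{\ddd}'(0)=1$ is precisely the solution of $\tau u=\lambda u$ with Dirichlet initial data; this function need not literally be the original $u_{\ddd}$, so (A1)-(A3) for $\tilde\tau$ are genuinely new conditions. For the operators of interest in this paper ($\Airyop$, $\Besselop$), these conditions at a fixed real $\lambda$ that is not an eigenvalue hold because the underlying diffusions governing the solutions of $\tau u=\lambda u$ have the same qualitative oscillation/decay behavior as at $\lambda=0$ — this is where one invokes the specific structure of the operators (essentially the analysis of the Riccati-type diffusions recalled later in Section \ref{s:SL} for the relevant operators). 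Alternatively, and more cleanly for a general statement, one can note that limit-point/limit-circle classification at an endpoint is independent of $\lambda\in\CC$ (Weyl's theorem), so limit point at $+\infty$ is automatic from (A1) for $\tau$; and the Hilbert-Schmidt property of the resolvent at one point $\lambda_0$ (here $\lambda_0=0$ from Proposition \ref{pr:SL2}) propagates to every $\lambda$ in the resolvent set via the resolvent identity $(\tau_L-\lambda)^{-1}=(\tau_L-\lambda_0)^{-1}\bigl(I+(\lambda-\lambda_0)(\tau_L-\lambda)^{-1}\bigr)$, since the product of a Hilbert-Schmidt operator and a bounded operator is Hilbert-Schmidt.

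With $\tilde\tau$ in the framework of Proposition \ref{pr:SL2}, that proposition applied to $\tilde\tau_L$ immediately gives that $(\tau_L-\lambda)^{-1}=\tilde\tau_L^{-1}$ is the integral operator with kernel
\[
K_\lambda^{(L)}(x,y)=\frac{1}{p_1(0)}\left(u_L(x) u_{\ddd}(y)\ind(x\ge y)+u_{\ddd}(x) u_L(y)\ind(x<y)\right),
\]
where $u_{\ddd}$ solves $\tau u=\lambda u$ with $u_{\ddd}(0)=0,\ u_{\ddd}'(0)=1$, and $u_L$ solves $\tau u_L=\lambda u_L$ with $u_L(0)=1$ and either $u_L(L)=0$ (finite $L$) or $u_L\in L^2(\R_+,r)$ (the $L=\infty$ case, where (A2)-type uniqueness for $\tilde\tau$ gives the unique square-integrable solution $u_\infty$). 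Note $p_1(0)$ is unchanged because $\tilde\tau$ has the same $p_1$; this is why the normalizing constant in the kernel is the same as in \eqref{HS_K}. The Hilbert-Schmidt property and pure-point spectrum also transfer directly from Proposition \ref{pr:SL2} applied to $\tilde\tau_L$ (or from the resolvent-identity argument above). I expect the only genuine subtlety — and the one step worth spelling out carefully — is justifying that the shifted operation $\tilde\tau$ still meets whatever hypotheses Proposition \ref{pr:SL2} requires (the finiteness conditions (A2)-(A3) at the level of $\lambda$), since (A1)-(A3) as stated are only assumed at $\lambda=0$; the resolvent-identity route sidesteps this by only needing the Hilbert-Schmidt property at the single point $0$ and then propagating it, which is the cleanest way to present the argument.
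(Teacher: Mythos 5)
The paper never writes out a proof of Proposition \ref{pr:SL3}: it (together with Propositions \ref{pr:SL1} and \ref{pr:SL2}) is quoted from the standard theory of Sturm--Liouville operators, with a pointer to \cite{Weidmann} and to Theorems 9.6--9.7 of \cite{Teschl} for the classical case $q_0=0$. Your reduction is therefore a genuinely different, more self-contained route, and a good part of it is sound: $\tau-\lambda$ is indeed a generalized S-L operation of the form (\ref{SL}) with $p_0$ replaced by the continuous function $p_0-\lambda r$; the Dirichlet self-adjoint realizations coincide, so $(\tau_L-\lambda)^{-1}=\tilde\tau_L^{-1}$; the normalization $1/p_1(0)$ in (\ref{HS_K}) is unchanged because the Wronskian of $u_L$ and $u_{\ddd}$ at $0$ is again $p_1(0)$; and the first resolvent identity does propagate the Hilbert--Schmidt property from one point of the resolvent set to all others (note that ``$\lambda$ is not an eigenvalue'' does place $\lambda$ in the resolvent set, since $\tau_L^{-1}$ is compact and hence the spectrum is discrete; for finite $L$ one should anchor the identity at a non-eigenvalue $\lambda_0$, or simply observe that the Green's kernel is continuous on the compact square $[0,L]^2$, because Proposition \ref{pr:SL2} assumes $u_{\ddd}(L)\neq 0$).

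The genuine gap is the closing claim that the resolvent identity ``sidesteps'' the verification of (A2)--(A3) for $\tilde\tau$. It does not: the proposition asserts more than Hilbert--Schmidtness of the operator, namely the kernel form (\ref{HS_K}) at $\lambda$ and, for $L=\infty$, the existence and uniqueness of the solution $u_\infty$ of $\tau u=\lambda u$ with $u_\infty(0)=1$ and $u_\infty\in L^2(\R_+,r)$. The resolvent identity produces none of this, while your main route (apply Proposition \ref{pr:SL2} to $\tilde\tau$) needs (A1)--(A3) for $\tilde\tau$ as hypotheses, which you do not establish (the appeal to the Riccati diffusions concerns only the two specific operators of the paper, not the general statement). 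Part of this is in fact free: limit point at $+\infty$ is $\lambda$-independent by Weyl's alternative, so an $L^2(\R_+,r)$ Dirichlet solution at $\lambda$ would be an eigenfunction, which gives (A1) for $\tilde\tau$; the same argument gives the uniqueness half of (A2), since two independent $L^2$ solutions would force every solution, including the Dirichlet one, to be in $L^2$. What is not free is the existence of an $L^2$ solution at a \emph{real} $\lambda$ in the resolvent set and the identification of the resolvent kernel with (\ref{HS_K}); this requires an argument (for instance examining $(\tau_L-\lambda)^{-1}f$ for compactly supported $f$, or the Weyl $m$-function), and it is precisely the content of the theorems the paper cites. Once that identification is made, (A3) at $\lambda$ is a consequence of the Hilbert--Schmidt bound rather than an input. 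So your write-up should either supply that existence-and-identification step or, as the paper does, quote it from \cite{Weidmann,Teschl}; as it stands, the argument proves only the operator-level Hilbert--Schmidt statement, not the full proposition.
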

The proofs of these propositions follow from the theory of Sturm-Liouville operators. Again, we refer to the monograph \cite{Weidmann}. Note that the classical theory (when $q_0=0$) is treated in a self-contained way in Chapter 9 of  \cite{Teschl} (see in particular Theorems 9.6 and 9.7).

\subsection{Bessel and Airy operators as generalized S-L operators}

The operators $\mathfrak{G}_{\beta,a}$, $\Besselop$, and $\Airyop$ can be represented as a generalized Sturm-Liouville operators for which Assumptions (A1-A3) are satisfied, and hence the appropriate resolvents are a.s.~Hilbert-Schmidt integral operators. We summarize the relevant results in the propositions below. 

\begin{proposition}[$\mathfrak{G}_{\beta,2a}$ as a Sturm-Liouville operator]\label{pr:hardSL}
The operator $\mathfrak{G}_{\beta,2a}$ is a Sturm-Liouville operator of the form (\ref{SL}) with $r=m_{2a}, p_1=s_{2a}^{-1}$, $p_0=q_0=0$. The operator satisfies the conditions (A1-A3) with probability one if $a>1/2$. 

If $\phi$ solves the equation $\mathfrak{G}_{\beta,2a} \phi=\lambda \phi$ with deterministic initial conditions $\phi(0)=c_0$, $\phi'(0)=c_1$ then $(\phi, \phi')$ is the unique strong solution of the stochastic differential equation system
\begin{align}\label{sdehard}
d\phi(x)=\phi'(x) dx, \quad d\phi'(x)=\tfrac{2}{\sqrt{\beta}}\phi'(x) dB_{2a}(x)+\left((2a+\tfrac2{\beta}) \phi'(x)-\lambda e^{-x} \phi(x)\right) dx,
\end{align}
with the corresponding initial conditions.
\end{proposition}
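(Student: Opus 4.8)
The statement to prove is Proposition \ref{pr:hardSL}, which identifies $\mathfrak{G}_{\beta,2a}$ as a Sturm--Liouville operator of the form \eqref{SL}, verifies the conditions (A1--A3) for $a>1/2$, and characterizes the solutions of the eigenvalue equation as strong solutions of the SDE system \eqref{sdehard}.

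\medskip

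\textbf{Plan of proof.} The proof splits into three essentially independent parts, corresponding to the three assertions in the statement.

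The first part --- matching $\mathfrak{G}_{\beta,2a}$ to the form \eqref{SL} --- is a direct computation. The operator \eqref{hardedgeop} reads $-\frac{1}{m(x)}\frac{d}{dx}\big(\frac{1}{s(x)}\frac{d}{dx}\,\cdot\,\big)$, which is precisely \eqref{SL} with $r=m_{2a}$, $p_1 = s_{2a}^{-1}$, and $p_0 = q_0 = 0$. I would point out that $m_{2a}$ and $s_{2a}$ are a.s.\ continuous and strictly positive on $[0,\infty)$ (being exponentials of continuous functions), so the regularity hypotheses on the coefficients in \eqref{SL} are met; note that here $a$ in \eqref{ms} is replaced by $2a$, so $m_{2a}(x)=e^{-(2a+1)x-\frac{2}{\sqrt\beta}B_{2a}(x)}$ and $s_{2a}(x)=e^{2ax+\frac{2}{\sqrt\beta}B_{2a}(x)}$.

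The second part --- verifying (A1--A3) a.s.\ when $a>1/2$ --- is where I expect the real work to lie. For (A1) I need the Dirichlet solution $u_{\ddd}$ of $\mathfrak{G}_{\beta,2a}u=0$ to fail to be in $L^2(\R_+, m_{2a})$. Since $p_0=q_0=0$, the equation $\tau u = 0$ integrates explicitly: $u'(x) = s_{2a}(x)\cdot(\text{const})$, so $u_{\ddd}(x) = \int_0^x s_{2a}(y)\,dy$ (using $p_1(0)u_{\ddd}'(0) = s_{2a}(0)^{-1}\cdot s_{2a}(0) = 1$, consistent with $u_{\ddd}'(0)=1$). Then $\int_0^\infty u_{\ddd}(x)^2 m_{2a}(x)\,dx = \int_0^\infty \big(\int_0^x s_{2a}(y)dy\big)^2 m_{2a}(x)\,dx$. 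The growth of $s_{2a}(x) = e^{2ax + \frac{2}{\sqrt\beta}B_{2a}(x)}$ is dominated by the $e^{2ax}$ drift (the Brownian term is $o(x)$ a.s.), so $\int_0^x s_{2a} \asymp e^{2ax}$ up to subexponential corrections, while $m_{2a}(x) \asymp e^{-(2a+1)x}$. Hence the integrand behaves like $e^{4ax}\cdot e^{-(2a+1)x} = e^{(2a-1)x}$, which is non-integrable precisely when $2a - 1 > 0$, i.e.\ $a > 1/2$ --- this is the source of the hypothesis. Making this rigorous requires controlling the Brownian fluctuations, e.g.\ via the law of iterated logarithm or a crude a.s.\ bound $|B_{2a}(x)| \le C_\omega(1+x)$ with a random but finite $C_\omega$; I would integrate these bounds carefully. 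For (A2), I need a solution $u_\infty$ with $u_\infty(0)=1$ that \emph{is} in $L^2(\R_+,m_{2a})$; the general solution of $\tau u = 0$ is $u = c_1 + c_2 u_{\ddd}$, and I would show that the constant function $u\equiv 1$ works: $\int_0^\infty m_{2a}(x)\,dx \asymp \int_0^\infty e^{-(2a+1)x}\,dx < \infty$ a.s.\ for any $a > -1/2$, in particular for $a>1/2$. Uniqueness follows because $u_{\ddd}\notin L^2$ from (A1), so no other linear combination is square-integrable. For (A3), I substitute $u_\infty \equiv 1$ and $u_{\ddd}(y) = \int_0^y s_{2a}$, giving $\int_0^\infty\int_0^x 1\cdot u_{\ddd}(y)^2\, m_{2a}(x)m_{2a}(y)\,dy\,dx \le \int_0^\infty u_{\ddd}(x)^2 m_{2a}(x)\,dx \cdot \int_0^\infty m_{2a}(y)\,dy$ --- wait, that bound is wrong since the first factor is infinite; instead I would bound $\int_0^x u_{\ddd}(y)^2 m_{2a}(y)\,dy$ and observe that $u_{\ddd}(y)^2 m_{2a}(y) \asymp e^{(2a-1)y}$ so this partial integral is $\asymp e^{(2a-1)x}$, and then $\int_0^\infty e^{(2a-1)x} m_{2a}(x)\,dx \asymp \int_0^\infty e^{(2a-1)x}e^{-(2a+1)x}\,dx = \int_0^\infty e^{-2x}\,dx < \infty$. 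Again the Brownian corrections must be absorbed, but the exponential gap is robust.

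The third part --- the SDE characterization \eqref{sdehard} --- is obtained by writing out $\mathfrak{G}_{\beta,2a}\phi = \lambda\phi$ in terms of $\phi$ and $\phi' $. From $-\frac{1}{m_{2a}}\big(\frac{1}{s_{2a}}\phi'\big)' = \lambda\phi$ we get $\big(\frac{1}{s_{2a}}\phi'\big)' = -\lambda m_{2a}\phi$. Setting $\psi = s_{2a}^{-1}\phi'$, this is $\psi' = -\lambda m_{2a}\phi$ and $\phi' = s_{2a}\psi$. To match the form \eqref{sdehard} I would instead work directly with $\phi'$ and apply It\^o's formula. Writing $\phi'(x) = s_{2a}(x)\psi(x)$ where $s_{2a}(x) = e^{2ax + \frac{2}{\sqrt\beta}B_{2a}(x)}$ --- note $B_{2a}(x) = a^{-1/3}B(a^{2/3}x)$ is a genuine Brownian motion --- and applying It\^o (here $d(e^{\frac{2}{\sqrt\beta}B_{2a}}) = e^{\frac{2}{\sqrt\beta}B_{2a}}(\frac{2}{\sqrt\beta}dB_{2a} + \frac{2}{\beta}dx)$), the quadratic variation contributes the $\frac2\beta$ term, yielding $d\phi'(x) = \frac{2}{\sqrt\beta}\phi'(x)\,dB_{2a}(x) + \big((2a + \frac2\beta)\phi'(x) - \lambda e^{-x}\phi(x)\big)dx$, after substituting $\psi' = -\lambda m_{2a}\phi$ and $m_{2a}/s_{2a} = e^{-(2a+1)x - \frac2{\sqrt\beta}B_{2a}}\cdot e^{-2ax - \frac2{\sqrt\beta}B_{2a}}$... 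I would recompute this product carefully --- the point is that $s_{2a}(x)\cdot(-\lambda m_{2a}(x)\phi(x))$ combined with the chain-rule term produces exactly the drift $-\lambda e^{-x}\phi(x)$ once one notes that the stochastic exponentials from $s_{2a}$ and $m_{2a}$ multiply to give $e^{-x}$ times a deterministic factor absorbed into the $2a$ coefficient. Existence and uniqueness of the strong solution then follow from the standard theory for SDEs with locally Lipschitz coefficients together with the a.s.\ non-explosion guaranteed by the explicit structure (the equation is linear in $(\phi,\phi')$ with continuous, hence locally bounded, coefficients driven by $B_{2a}$). The main obstacle overall is the careful bookkeeping of Brownian fluctuations in the integrability estimates for (A1) and (A3), ensuring that the a.s.\ subexponential corrections do not overturn the exponential-scale conclusions near $a=1/2$.
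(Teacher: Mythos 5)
Your proposal is correct and follows essentially the same route as the paper, which likewise takes $u_{\ddd}(x)=\int_0^x s_{2a}(y)\,dy$ and $u_\infty\equiv 1$, controls the exponential scales via the a.s.\ sublinear growth of the Brownian motion (yielding the threshold $2a-1>0$), and obtains \eqref{sdehard} by It\^o's formula applied to $\phi'=s_{2a}\psi$ with $\psi=s_{2a}^{-1}\phi'$. The only blemish is the intermediate product you flag for recomputation: what is needed is $s_{2a}(x)\,m_{2a}(x)=e^{-x}$ exactly (not $m_{2a}/s_{2a}$, and with no extra factor to absorb into the $2a$ term, which comes from the drift of $s_{2a}$ itself), and with that correction your derivation matches the paper's.
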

\begin{proof}
The fact that $\mathfrak{G}_{\beta,2a}$ is a Sturm-Liouville operator is contained in the statement of Theorem \ref{thm:hardedge}, the statement about the solution of the eigenvalue equation can be checked with It\^o's formula (see \cite{RR}). As explained in \cite{RR_err}, the Neumann boundary condition for $\mathfrak{G}_{\beta,2a}$ at $\infty$ for $a>0$ can be dropped. The SDE (\ref{sdehard}) satisfies the usual conditions for existence and uniqueness, so $(\phi,\phi')$ is a well-defined process for all times.

We only need to check that the conditions (A1-A3) are satisfied for $a>1/2$. 
This can be done directly using the a.s.~sublinear growth of the Brownian motion by noting that $u_{\ddd}(x)=\int_0^x s_{2a}(y) dy$ and $u_\infty(x)=1$. 
\end{proof}
\begin{proposition}[Integral kernel for $(\Besselop-a^2)^{-1}$]\label{prop:hardK}
For a given $a>1/2$, let $ \phi_{\ddd}^{(2a)}$ be the unique strong solution of (\ref{sdehard}) with $\lambda=a^2$ and initial conditions 
$\phi(0)=0$, $\phi'(0)=1$. Let $\mathcal E_a$ be the event  that $a^2$ is not an eigenvalue of $\mathfrak{G}_{\beta,2a}$.
Denote by $\phi_\infty^{(2a)}$  the unique solution of $\mathfrak{G}_{\beta,a} \phi_\infty^{(2a)}=a^2 \phi_\infty^{(2a)}$ with $\phi_\infty^{(2a)}(0)=1$ and  $ \phi_\infty^{(2a)}\in L^2(\R_+,m_{2a})$, this exists on $\mathcal E_a$.
 Then on the event $\mathcal E_a$  the operator $a^{4/3}(\Besselop-a^2)^{-1}$ is a Hilbert-Schmidt integral operator in $L^2(\R_+)$ with integral kernel 
\begin{align*}
K_{\textsf{G},2a}(x,y)&=\tilde \phi_\infty(x) \tilde \phi_{\ddd}(y) \ind(x\ge y)+\tilde \phi_{\ddd}(x) \tilde \phi_\infty(y) \ind(x<y),
\end{align*}
where
\begin{align}\label{phitilde}
\tilde \phi_{\ddd}(x)=a^{2/3} m_{2a}^{1/2}(a^{-2/3} x)  \phi_{\ddd}^{(2a)}(a^{-2/3} x), \qquad \tilde\phi_\infty(x)=m_{2a}^{1/2}(a^{-2/3} x)  \phi_\infty^{(2a)}(a^{-2/3} x). 
\end{align}
On the event $\mathcal{E}_a$ the operator  $a^{4/3}(\Besselop-a^2)^{-1}$ has a bounded pure point spectrum that is the same as the spectrum of $ a^{4/3} (\mathfrak{G}_{\beta,2a} -a^2)^{-1}$.
\end{proposition}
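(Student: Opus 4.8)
The plan is to realize $\Besselop$ as a unitary conjugate of $\mathfrak{G}_{\beta,2a}$ and then transport the integral-kernel description of its resolvent through the conjugation. First I would invoke Proposition~\ref{pr:hardSL}: for $a>1/2$, almost surely $\mathfrak{G}_{\beta,2a}$ is a generalized Sturm--Liouville operator of the form \eqref{SL} with $r=m_{2a}$, $p_1=s_{2a}^{-1}$, $q_0=p_0=0$, and it satisfies (A1)--(A3). On the event $\mathcal E_a$, $a^2$ is not an eigenvalue, so Proposition~\ref{pr:SL3} with $L=\infty$ and $\lambda=a^2$ applies and gives that $(\mathfrak{G}_{\beta,2a}-a^2)^{-1}$ is a Hilbert--Schmidt integral operator on $L^2(\R_+,m_{2a})$ with kernel of the form \eqref{HS_K}. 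Since $B_{2a}(0)=0$ we have $p_1(0)=s_{2a}(0)^{-1}=1$, the Dirichlet solution is $\phi_{\ddd}^{(2a)}$, and the $L^2(\R_+,m_{2a})$-solution normalized at $0$ is $\phi_\infty^{(2a)}$ (which therefore exists on $\mathcal E_a$), so this kernel equals
\[
K(u,v)=\phi_\infty^{(2a)}(u)\,\phi_{\ddd}^{(2a)}(v)\,\ind(u\ge v)+\phi_{\ddd}^{(2a)}(u)\,\phi_\infty^{(2a)}(v)\,\ind(u<v).
\]

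Next I would make the two unitaries in \eqref{Besselop} explicit. The multiplication operator $U_1\colon L^2(\R_+,m_{2a})\to L^2(\R_+)$, $U_1 f=m_{2a}^{1/2}f$, is unitary by definition of the weighted space, and $m_{2a}^{1/2}\,\mathfrak{G}_{\beta,2a}\,m_{2a}^{-1/2}=U_1\,\mathfrak{G}_{\beta,2a}\,U_1^{-1}$. The stretching $\theta_a$ of \eqref{stretch} is not unitary ($\|\theta_a f\|_2^2=a^{-2/3}\|f\|_2^2$), but $U_2\colon L^2(\R_+)\to L^2(\R_+)$, $(U_2 f)(x)=a^{1/3}f(a^{2/3}x)$, is; since $\theta_a$ and $U_2$ differ only by the scalar $a^{-1/3}$, conjugation by $\theta_a$ coincides with conjugation by $U_2$. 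Therefore \eqref{Besselop} reads $\Besselop=U_2^{-1}U_1\,\mathfrak{G}_{\beta,2a}\,U_1^{-1}U_2=V\,\mathfrak{G}_{\beta,2a}\,V^{-1}$ with $V:=U_2^{-1}U_1\colon L^2(\R_+,m_{2a})\to L^2(\R_+)$ unitary. This already yields that $\Besselop$ is self-adjoint with the same spectrum as $\mathfrak{G}_{\beta,2a}$, that $a^2\notin\operatorname{spec}(\Besselop)$ on $\mathcal E_a$, and that $(\Besselop-a^2)^{-1}=V(\mathfrak{G}_{\beta,2a}-a^2)^{-1}V^{-1}$ is Hilbert--Schmidt on $L^2(\R_+)$ because the Hilbert--Schmidt norm is unitarily invariant.

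It remains to compute the kernel of $(\Besselop-a^2)^{-1}$. Applying the operator to a test function $g\in L^2(\R_+)$, i.e.\ composing $V^{-1}=U_1^{-1}U_2$, the integral operator with kernel $K$ against the measure $m_{2a}(v)\,dv$, and $V=U_2^{-1}U_1$, and then substituting $z=a^{2/3}v$, one reads off that the kernel against Lebesgue measure is
\[
a^{-2/3}\,m_{2a}^{1/2}(a^{-2/3}x)\,m_{2a}^{1/2}(a^{-2/3}z)\,K(a^{-2/3}x,\,a^{-2/3}z),
\]
the surviving factors being the weights $m_{2a}^{\pm1/2}$ and the Jacobian $a^{-2/3}$. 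Multiplying by $a^{4/3}$, using $\ind(a^{-2/3}x\ge a^{-2/3}z)=\ind(x\ge z)$, and assigning the leftover factor $a^{2/3}$ to the $\phi_{\ddd}^{(2a)}$ argument converts $K$ into $K_{\textsf{G},2a}$ with $\tilde\phi_\infty,\tilde\phi_{\ddd}$ exactly as in \eqref{phitilde}. The remaining claims---bounded pure point spectrum, equal to that of $a^{4/3}(\mathfrak{G}_{\beta,2a}-a^2)^{-1}$---follow from Proposition~\ref{pr:SL2} and the unitary equivalence. I do not anticipate a real analytic obstacle; the only point needing care is the bookkeeping of the powers of $a$ and of the weight $m_{2a}$ through the two conjugations, together with the elementary check $p_1(0)=1$.
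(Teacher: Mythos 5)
Your proposal is correct and follows essentially the same route as the paper: invoke Proposition \ref{pr:SL3} (via Proposition \ref{pr:hardSL}, noting $p_1(0)=s_{2a}(0)^{-1}=1$) to get the Hilbert--Schmidt kernel of $(\mathfrak{G}_{\beta,2a}-a^2)^{-1}$ on $L^2(\R_+,m_{2a})$, then transport it through the conjugation in \eqref{Besselop} to obtain $K_{\textsf{G},2a}$ and the spectral statements. The paper compresses the conjugation step into one line, whereas you spell out the unitaries (including the harmless scalar discrepancy between $\theta_a$ and its unitary normalization) and the change of variables, but the argument is the same and your bookkeeping of the weights and powers of $a$ checks out.
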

\begin{proof}
By Proposition  \ref{pr:SL3}, the function $\phi_\infty^{(2a)}$ is well-defined on $\mathcal{E}_{a}$, and  the operator $(\mathfrak{G}_{\beta,2a}-a^2)^{-1}$ is Hilbert-Schmidt on  $L^2(\R_+,m_{2a})$ with 
 integral kernel 
\[
K_{\mathfrak{G},2a}(x,y)= \phi^{(2a)}_\infty(x)  \phi^{(2a)}_{\ddd}(y) \ind(x\ge y)+ \phi^{(2a)}_{\ddd}(x)  \phi^{(2a)}_\infty(y) \ind(x<y).
\]
Recalling the definition of $\Besselop$ from (\ref{Besselop}) we get that $a^{4/3} (\Besselop-a^2)^{-1}$ is a Hilbert-Schmidt integral operator on $L^2(\R_+)$ with kernel 
\begin{align*}
K_{\textsf{G},2a}(x,y)&=a^{2/3} m_{2a}^{1/2}(a^{-2/3} x)\, K_{\mathfrak{G},2a}(a^{-2/3}x ,a^{-2/3} y) \,m_{2a}^{1/2}(a^{-2/3} y),
\end{align*}
from which the proposition follows.
\end{proof}
Note that for any fixed $a>1/2$, the event $\mathcal E_a$ has a probability $1$, see Remark \ref{rmk:qfluctuaction}. Later, in Corollary \ref{prop:eigenvalue}  in Section \ref{s:Step3proof} we show that \emph{in our coupling} if $a$ is large enough then $a^2$ is not  an eigenvalue for $\mathfrak{G}_{\beta,2a}$.

\begin{proposition}[The operator $\Airyop$ as a generalized S-L operator]\label{pr:AirySL}
The operator $\Airyop$ is a generalized Sturm-Liouville operator of the form (\ref{SL}) with $r(x)= p_1(x)=1$, $q_0(x)=\tfrac{2}{\sqrt{\beta}}B(x)$, $p_0(x)=x$. The operator  satisfies the conditions (A1-A3) with probability one. 

If $\psi$ solves the equation $\Airyop \psi=0$ with deterministic initial conditions $\psi(0)=c_0$, $\psi'(0)=c_1$, $(c_0,c_1)\neq (0,0)$, then $(\psi, \psi')$ is the strong solution of the SDE system
\begin{align}\label{AirySDE_1}
d\psi(x)=\psi'(x) dx, \qquad d\psi'(x)=\psi(x)\left(\tfrac{2}{\sqrt{\beta}} dB+x dx   \right),
\end{align}
which is well defined for all times, and satisfies
\begin{align}\label{Airysqr}
\frac{\psi'(x)}{\psi(x) \sqrt{x}}\to 1 \quad\text{a.s.~as }x\to \infty.
\end{align}
A.s.~0 is not an eigenvalue of $\Airyop$, and the operator $\Airyop^{-1}$ is a Hilbert-Schmidt integral operator with kernel
\begin{align}\label{AiryopHS}
K_{\textsf{A}}(x,y)=\psi_\infty(x) \psi_{\ddd}(y) \ind(x\ge y)+\psi_{\ddd}(x) \psi_\infty(y) \ind(x<y).
\end{align}
Here  $\psi_{\ddd}$ is the solution of $\Airyop \psi=0$ with initial condition $\psi_{\ddd}(0)=0$,  $\psi_{\ddd}'(0)=1$, and
  $\psi_{\infty}\in  L^2(\R_+)$ is the unique function satisfying 
$
\Airyop  \psi_{\infty}=0$, $\psi_\infty(0)=1$ (see Figure \ref{RT_psidpsiinfty}).
\end{proposition}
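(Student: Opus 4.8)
The plan is to check that the stated coefficients put $\Airyop$ into the generalized Sturm--Liouville form \eqref{SL}, to verify the SDE description of its solutions, and then to establish (A1)--(A3) so that Propositions \ref{pr:SL1}--\ref{pr:SL3} apply. For the first point one simply expands $\tau u$ from \eqref{SL} with $r=p_1=1$, $q_0=\tfrac{2}{\sqrt\beta}B$, $p_0(x)=x$: since $(p_1u'-q_0u)'=u''-\tfrac{2}{\sqrt\beta}(B'u+Bu')$ in the distributional sense, the two $\tfrac{2}{\sqrt\beta}Bu'$ terms cancel and one gets $\tau u=-u''+xu+\tfrac{2}{\sqrt\beta}B'u$, which is the formal expression \eqref{Airyop}. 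The point is that $q_0=\tfrac{2}{\sqrt\beta}B$ is a.s.\ continuous (indeed locally H\"older) and $p_0,p_1,r$ are continuous with $p_1,r>0$, so the framework of Section \ref{s:SL} applies without ever differentiating the white noise.

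For the SDE description I would work with the quasi-derivative $\psi^{[1]}:=p_1\psi'-q_0\psi=\psi'-\tfrac{2}{\sqrt\beta}B\psi$. The equation $\Airyop\psi=0$ in the S--L sense means precisely that $\psi$ and $\psi^{[1]}$ are absolutely continuous with $\psi'=\psi^{[1]}+\tfrac{2}{\sqrt\beta}B\psi$ and $(\psi^{[1]})'=p_0\psi-q_0\psi'=x\psi-\tfrac{2}{\sqrt\beta}B\psi'$. Since $\psi$ has finite variation, $d(B\psi)=\psi\,dB+B\,d\psi$ with no It\^o correction, and substituting gives $d\psi'=d\psi^{[1]}+\tfrac{2}{\sqrt\beta}d(B\psi)=x\psi\,dx+\tfrac{2}{\sqrt\beta}\psi\,dB$, i.e.\ \eqref{AirySDE_1}; conversely a solution of \eqref{AirySDE_1} produces, via It\^o, an absolutely continuous pair $(\psi,\psi^{[1]})$ solving the S--L equation. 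System \eqref{AirySDE_1} is linear with coefficients continuous in the time variable, so it has a unique strong solution with no blow-up in finite time, defined for all $x\ge 0$.

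The analytic core is the asymptotics \eqref{Airysqr}, which I would obtain by a Riccati analysis: for a nontrivial solution $\psi$ put $\rho=\psi'/\psi$ away from the zeros of $\psi$; It\^o's formula gives $d\rho=(x-\rho^2)\,dx+\tfrac{2}{\sqrt\beta}\,dB$. A comparison argument (as in the treatment of the stochastic Airy diffusion in \cite{RRV}) shows that a.s.\ $\psi$ has only finitely many zeros, $\rho$ is eventually positive, and $\rho(x)/\sqrt x\to 1$, with the correction $w=\rho-\sqrt x$ controlled by a Gronwall estimate for the restoring drift $-2\sqrt x\,w$. This immediately yields (A1): applied to the deterministic initial condition $u_{\ddd}(0)=0$, $u_{\ddd}'(0)=1$ it shows $\tfrac{d}{dx}\log|u_{\ddd}|\sim\sqrt x$, so $|u_{\ddd}(x)|$ grows super-polynomially and $u_{\ddd}\notin L^2(\R_+)$; by Weyl's alternative $\tau$ is limit point at $+\infty$. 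The same fact shows that a.s.\ $0$ is not an eigenvalue of $\Airyop$, since $0$ is an eigenvalue of the Dirichlet realization exactly when $u_{\ddd}\in L^2(\R_+)$. For (A2), the limit point property gives at most one $L^2$ solution of $\tau u=0$ up to scalars; combined with the discreteness of the spectrum of the Dirichlet realization (from \cite{RRV} together with Proposition \ref{pr:SL1}) and with $0$ not being an eigenvalue, there is exactly one, and since $u_{\ddd}\notin L^2$ it does not vanish at $0$, so it normalizes to $u_\infty(0)=1$; equivalently $u_\infty$ can be produced directly as the Weyl limit $\lim_{L\to\infty}v_L$ of the solutions $v_L$ with $v_L(0)=1$, $v_L(L)=0$. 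The Riccati analysis applied to $u_\infty$ gives the complementary asymptotics $u_\infty'/u_\infty\sim-\sqrt x$, so $u_\infty$ decays like $e^{-\tfrac23 x^{3/2}}$ up to a polynomial factor.

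Finally, (A3) follows by combining the constancy of the Wronskian $u_{\ddd}u_\infty'-u_{\ddd}'u_\infty\equiv-1$ (evaluated at $0$, using $p_1(0)=1$ and $u_{\ddd}(0)=0$) with the two Riccati asymptotics: writing $u_{\ddd}u_\infty'-u_{\ddd}'u_\infty=u_{\ddd}u_\infty\,(u_\infty'/u_\infty-u_{\ddd}'/u_{\ddd})$ and using $u_\infty'/u_\infty-u_{\ddd}'/u_{\ddd}=-2\sqrt x\,(1+o(1))$ gives $u_{\ddd}(x)u_\infty(x)=O(x^{-1/2})$, hence $u_{\ddd}(y)^2\le C\,y^{-1}u_\infty(y)^{-2}$ for large $y$. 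Since $g:=-2\log u_\infty$ has $g'(y)=-2u_\infty'/u_\infty\ge\sqrt y$ for large $y$, a standard Laplace-type estimate (e.g.\ via $\tfrac{d}{dy}\bigl(e^{g}/(yg')\bigr)\ge c\,e^{g}/y$ for large $y$) yields $\int_0^x u_{\ddd}(y)^2\,dy\le C_1+C_2\,x^{-3/2}u_\infty(x)^{-2}$, so $u_\infty(x)^2\int_0^x u_{\ddd}(y)^2\,dy\le C_1\,u_\infty(x)^2+C_2\,x^{-3/2}$, which is integrable over $\R_+$ because $u_\infty\in L^2$. With (A1)--(A3) verified and $0$ not an eigenvalue, Propositions \ref{pr:SL1} and \ref{pr:SL2} (applied with $\lambda=0$, $p_1(0)=1$, $u_L=u_\infty=\psi_\infty$, $u_{\ddd}=\psi_{\ddd}$) give the self-adjoint Dirichlet realization and the Hilbert--Schmidt integral kernel \eqref{AiryopHS}. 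The main obstacle is the Riccati analysis behind \eqref{Airysqr} and the matching decay rate of $u_\infty$: one must show that the white-noise term does not destroy the $\pm\sqrt x$ leading behavior of $\rho$, and must keep track of the polynomial correction precisely enough for the integrability in (A3).
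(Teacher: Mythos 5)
Your overall route coincides with the paper's: verify the generalized S--L form, obtain \eqref{AirySDE_1} through the quasi-derivative $\psi^{[1]}=\psi'-\tfrac{2}{\sqrt\beta}B\psi$, prove \eqref{Airysqr} by a Riccati/stable-branch analysis, and then deduce (A1)--(A3) so that Propositions \ref{pr:SL1}--\ref{pr:SL3} give the kernel \eqref{AiryopHS}. Two points, however, are not actually carried out, and one of them is wrong as stated. First, \eqref{Airysqr} is part of the statement being proved; you defer it to ``a comparison argument as in \cite{RRV}'', but that reference only states the asymptotics without proof, and the paper supplies the missing argument itself (Proposition \ref{pr:Airy}, via Lemmas \ref{l:Airy_close} and \ref{l:Airy_stays}: a hitting estimate for the region $|X(t)-\sqrt t|\le\tfrac12 t^{-1/4}\ln t$ and a ``stays there forever'' estimate using the linearized drift $-2\sqrt t$). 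Your sketch names the right mechanism, but the quantitative content you acknowledge as ``the main obstacle'' is exactly what has to be proved here.

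Second, and more seriously, you claim that ``the Riccati analysis applied to $u_\infty$'' yields $u_\infty'/u_\infty\sim-\sqrt x$. That step fails as described: the Riccati SDE argument applies to solutions with deterministic (or adapted) initial data and shows that their log-derivative converges to the \emph{stable} branch $+\sqrt x$; the decaying solution $\psi_\infty$ is precisely the exceptional solution tracking the unstable branch $-\sqrt x$, and it is not adapted to the filtration of $B$ (its value at $0$ depends on the whole path), so no statement proved for the diffusion $X$ transfers to it. Your existence argument for (A2) via limit-point theory plus discreteness of the spectrum also imports a nontrivial external input. The paper bypasses both issues at once by constructing $\psi_\infty$ explicitly by reduction of order, $\psi_\infty(x)=\psi_{\ddd}(x)\int_x^\infty\psi_{\ddd}(y)^{-2}dy$ beyond the last zero of $\psi_{\ddd}$, extending it as a solution and reading off $\psi_\infty(0)=1$ from the Wronskian \eqref{Wr_0}; then the $L^2$ membership, the product bound $\psi_{\ddd}\psi_\infty=O(x^{-1/2})$, and the (A3) integrability all follow from the $\psi_{\ddd}$ asymptotics \eqref{Airysqr} alone, via the bounds \eqref{ineq:prodpsi}. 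Your Wronskian computation and Laplace-type estimate for (A3) are fine in spirit, but they rest on the unproved $-\sqrt x$ asymptotics; replacing that input by the representation \eqref{Airy_infty} (from which the claimed asymptotics of $\psi_\infty$ is then a corollary, as the paper remarks) would close the gap.
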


\begin{proof}
The fact that  the soft-edge operator $\Airyb$ can be represented as a generalized Sturm-Liouville operator of the form (\ref{SL}) with the listed coefficients was shown in \cite{AlexPhD} (see also \cite{Minami}). The SDE representation of the solutions of   $\Airyop \psi=0$ with a deterministic initial condition is shown in \cite{RRV}. Since the SDE (\ref{AirySDE_1}) satisfies the usual conditions of existence and uniqueness for SDEs, the solution is well defined for all times. The asymptotics (\ref{Airysqr})
was stated without proof in \cite{RRV},  we include a proof of this statement in Proposition \ref{pr:Airy} in Section \ref{s:Xlemmas} below for completeness.  

To check that the conditions (A1)-(A3) are satisfied we first observe that if $\psi_{\ddd}$ is the solution of $\Airyop \psi=0$ with Dirichlet initial condition then by (\ref{Airysqr}) for any fixed $\eps>0$ we have
\begin{align}\label{psibnd}
 e^{\left(2/3-\eps\right) x^{3/2}}\le \psi_{\ddd}(x)\le e^{(2/3+\eps) x^{3/2}}\quad \textup{ for $x$ large enough,}
 \end{align}
 hence $\psi_{\ddd}$ is not in $L^2(\R_+)$. This means that a.s.~there can be at most one $L^2(\R_+)$ solution of $\Airyop \psi=0$ with initial condition $\psi(0)=1$. We will  construct such a function using $\psi_{\ddd}$.
 
Denote by $z_0$ the largest zero of $\psi_{\ddd}$ on $\R_+$, and let $z_0=0$ if such a zero does not exists. 
 Motivated by the Wronskian identity we introduce the function 
\begin{align}\label{Airy_infty}
\psi_\infty(x)=\psi_{\ddd}(x)\int_x^\infty \psi_{\ddd}(y)^{-2} dy
\end{align}
which is well defined for $x>z_0$. One can  check that $\psi_\infty$ satisfies $ \Airyop \psi_\infty=0$ and the Wronskian identity
 \begin{align}\label{Wr_0}
\psi_\infty'(x) \psi_{\ddd}(x)-\psi_\infty(x) \psi'_\ddd(x)= -1\,
\end{align}
for $x>z_0$. Then, the function $\psi_\infty$  can be uniquely extended to $\R_+$ as a solution of $\Airyop \psi=0$. This function satisfies (\ref{Wr_0}) on $\R_+$, hence it will satisfy $\psi_\infty(0) = 1$.

Using (\ref{Airy_infty}) we see that for $x>z_0$ we have
\begin{align*}
\psi_{\ddd}(x)\psi_\infty(x)=\int_x^\infty \frac{\psi_{\ddd}(x)^2}{\psi_{\ddd}(y)^2} dy= \int_x^\infty \exp\left(-2\int_x^y \frac{\psi'_{\ddd}(z)}{\psi_{\ddd}(z)}dz\right) dy,
\end{align*}
and from  (\ref{Airysqr}) we get the bounds 
\begin{align}\label{ineq:prodpsi}
&\sqrt{y} \int_0^y \psi_{\ddd}(x)^2  \psi_{\ddd}(y)^{-2}dx\leq C,\qquad \sqrt{y} \int_y^\infty \psi_{\ddd}(x)^{-2}  \psi_{\ddd}(y)^{2}dx\leq C,
\end{align}
for some random $C<\infty$.  Together with the bound (\ref{psibnd}) this is now sufficient to show that $\psi_\infty$ is in $L^2(\R_+)$, and that 
\[\int_0^\infty \int_0^x \psi_\infty(x)^2 \psi_{\ddd}(y)^2 dy \,dx<\infty.\]
By Propositions \ref{pr:SL2}  and \ref{pr:AirySL} it follows immediately that  $\Airyop^{-1}$ is almost surely a Hilbert-Schmidt integral operator with kernel given in (\ref{AiryopHS}).
\end{proof}

\begin{figure}[!h]
\centering
\includegraphics[width = 10cm]{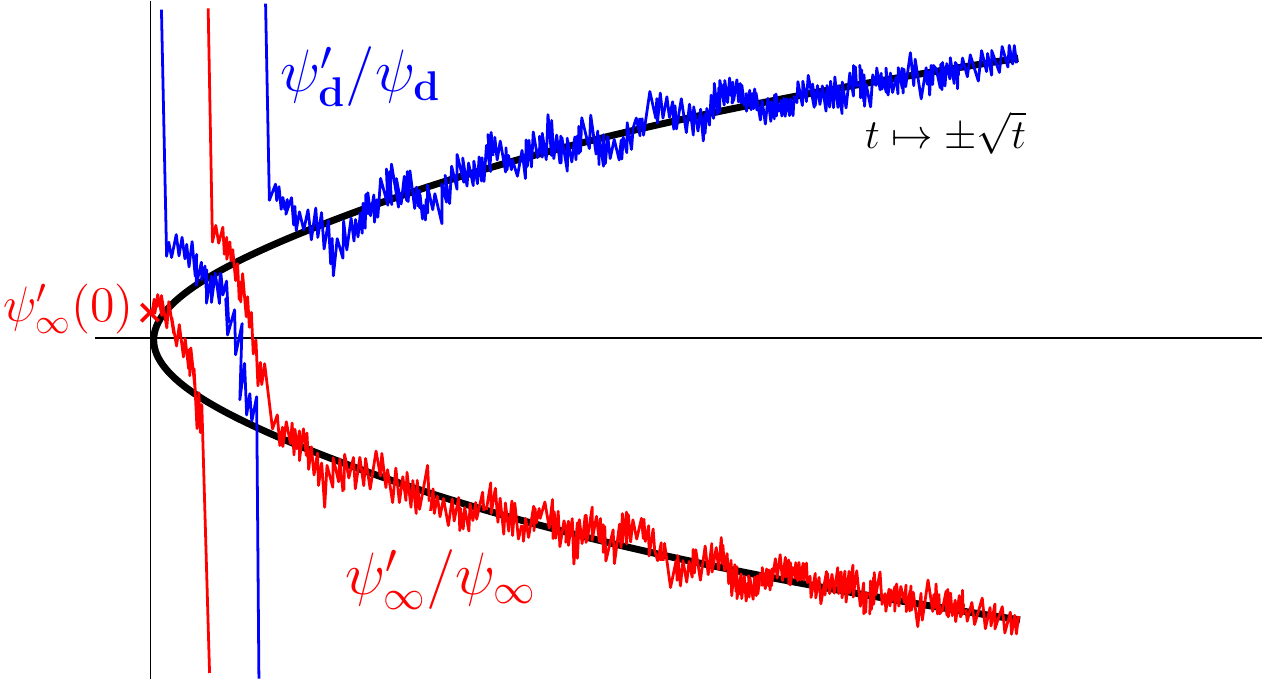}
\caption{Representation of the log-derivatives  of $\psi_\ddd$ and $\psi_\infty$.  
}\label{RT_psidpsiinfty}
\end{figure}

\begin{remark}
Using the identity (\ref{Airy_infty}) and the limit (\ref{Airysqr}) one can show that $\psi'_{\infty}(x)/\psi_\infty(x)\to -\sqrt{x}$ a.s.~as $x\to \infty$, and that $\psi_\infty(x)\le e^{-(2/3-\eps)x^{3/2}}$ for $x$ large enough. This behavior was also noted in \cite{RRV}.  See Figure \ref{RT_psidpsiinfty} for an illustration for the behavior of $\psi_{\ddd}, \psi_{\infty}$. 
\end{remark}

%
We record here the Wronskian identities for the appropriate operators: 
\begin{align}\label{Wr_1}
\psid(x) \psi_\infty'(x)-\psid'(x) \psi_\infty(x)=-1, \qquad \phi_{\ddd}(x) \phi_\infty'(x)-\phi_{\ddd}'(x) \phi_\infty(x)=-s_{2a}(x).
\end{align}
where we dropped the $a$-dependence in $\phi^{(2a)}_{\ddd}, \phi^{(2a)}_{\infty}$ to alleviate the notation.
From the second equation of (\ref{Wr_1}) one can obtain  the following analogue of the identity (\ref{Airy_infty}) for the hard edge diffusions: 
\begin{align}\label{wronski}
\phi_\infty(x)= \phi_{\ddd}(x)\int_x^\infty \phi_{\ddd}(y)^{-2} s_{2a}(y)dy,
\end{align}
if $x$ is larger than the largest zero of  $\phi_{\ddd}$.

Note that the functions $\psid, \phi_{\ddd}$ are diffusions with respect to the natural filtrations of the Brownian motions $B, B_{2a}$. This is not the case for the functions $ \psi_\infty$ and $\phi_{\infty}$, as the starting values of these processes depend on the $\sigma$-field generated by the whole Brownian motion $B(t), t\ge 0$. In particular, those functions are not Markovian.

\section{Proof of  Theorem \ref{thm:hardtosoft}}\label{s:mainsteps}

\begin{proof}[Proof of  Theorem \ref{thm:hardtosoft}]
In order to prove the theorem, we first need to show that in our coupling with probability one $a^2$ is not an eigenvalue of the operator $\Besselop$ if $a$ is large enough. This will be the content of Corollary \ref{prop:eigenvalue} in Section \ref{s:Step3proof}: we will show that there is an a.s.~finite random variable $C_{\text{ev}}$ such that the operator $\Besselop-a^2$ is invertible for all $a>C_{\text{ev}}$. In particular, this means that on the event  $\{a>C_{\text{ev}}\}$ the operator  $(\Besselop-a^2)^{-1}$ is a well-defined integral operator with kernel given in Proposition \ref{prop:hardK}.

By the results of Section \ref{s:SL}, to prove Theorem \ref{thm:hardtosoft} we need to show that  we have
\begin{align}\label{HSlim}
\lim_{a\to \infty} \int_0^\infty \int_0^\infty \left|K_{\textsf{A}}(x,y)- K_{\textsf{G},2a}(x,y) \right|^2 dx \,dy=0 \qquad  a.s. 
\end{align}
We  do this by  approximating $K_{\textsf{A}}$ and $K_{\textsf{G},2a}$ with the resolvent kernels of   the appropriate differential operators  restricted to $[0,L]$, with $L>0$. We denote these operators by $K_{\textsf{A}}^{(L)}$ and $K_{\textsf{G},2a}^{(L)}$. More specifically, set
\begin{align}\label{KAL}
K_{\textsf{A}}^{(L)}(x,y)=\psi_L(x) \psi_{\ddd}(y) \ind(y\le x\le L)+\psi_{\ddd}(x) \psi_L(y) \ind(x<y\le L),
\end{align}
where $\psi_L$ which solves $\Airyop \psi=0$ with boundary conditions $\psi_L(0)=1$, $\psi_L(L)=0$. The function $\psi_L$ is well-defined if $\psi_{\ddd}(L)\neq 0$. 

Moreover, set
\begin{align}\label{KGL}
K_{\textsf{G},2a}^{(L)}(x,y)=\tilde \phi_L(x) \tilde \phi_{\ddd}(y) \ind(y\le x\le L)+\tilde \phi_{\ddd}(x) \tilde \phi_L(y) \ind(x<y\le L)
\end{align}
where 
\[
\tilde\phi_L(x)=m_{2a}^{1/2}(a^{-2/3} x)  \phi_{a^{-2/3} L} (a^{-2/3} x),
\]
and $\phi_{a^{-2/3} L} $ solves the equation $\mathfrak{G}_{\beta,2a} \phi=a^2 \phi$ with $\phi_{a^{-2/3} L} (0)=1$, $\phi_{a^{-2/3} L} (a^{-2/3} L)=0$. 
The function $\tilde\phi_L$ is well-defined if $\phi_{\ddd}(a^{-2/3} L)\neq 0$. 
(Note that $\phi$ and $\tilde \phi$  depend on $a$ as well, which we do not denote.)

By the triangle inequality we have
\[
\|K_{\textsf{A}}-K_{\textsf{G},2a}\|_2\le \|K_{\textsf{A}}-K_{\textsf{A}}^{(L)}\|_2+\|K_{\textsf{A}}^{(L)}-K_{\textsf{G},2a}^{(L)}\|_2+\|K_{\textsf{G},2a}-K_{\textsf{G},2a}^{(L)}\|_2.
\]
We will show that all three terms on the right will vanish in the limit if we let $a\to \infty$ and then $L\to \infty$ along a particular sequence, this is the content of the Lemmas \ref{l:Step1}, \ref{l:Step2} and \ref{l:Step3} below.   From these three lemmas, we deduce the convergence (\ref{HSlim}), and hence Theorem \ref{thm:hardtosoft} follows.
\end{proof}

More precisely, we will prove the following three lemmas.
\begin{lemma}[Truncation of the Airy operator]\label{l:Step1}
$\|K_{\textsf{A}}-K_{\textsf{A}}^{(L)}\|_2^2\to 0$ a.s.~as $L\to \infty$.
\end{lemma}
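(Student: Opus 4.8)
The plan is to show that the Hilbert–Schmidt norm of the difference $K_{\textsf{A}}-K_{\textsf{A}}^{(L)}$ is controlled by the tail contributions of the integral kernel beyond $[0,L]$, and that these tails are a.s.~summable thanks to the (exponential) decay estimates for $\psi_{\ddd}$ and $\psi_\infty$ recorded in Proposition~\ref{pr:AirySL} and the remark following it. First I would split the double integral $\int_0^\infty\int_0^\infty |K_{\textsf{A}}-K_{\textsf{A}}^{(L)}|^2\,dx\,dy$ according to whether $\max(x,y)\le L$ or $\max(x,y)>L$. On the region where both coordinates exceed no obstacle ($\max(x,y)>L$) only $K_{\textsf{A}}$ contributes (since $K_{\textsf{A}}^{(L)}$ vanishes there by the indicator functions in \eqref{KAL}), and by symmetry this piece equals $2\int_L^\infty\int_0^x \psi_\infty(x)^2\psi_{\ddd}(y)^2\,dy\,dx$ plus the analogous piece with $y\le x$ replaced by $x<y$ restricted to $x>L$ — in any case it is bounded by a constant times $\int_L^\infty \psi_\infty(x)^2\big(\int_0^x\psi_{\ddd}(y)^2 dy\big)dx$, which tends to $0$ as $L\to\infty$ because the full integral $\int_0^\infty\int_0^x \psi_\infty(x)^2\psi_{\ddd}(y)^2\,dy\,dx$ was shown to be finite (condition (A3)) in the proof of Proposition~\ref{pr:AirySL}.

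The genuinely new work is on the region $\max(x,y)\le L$, where both kernels are present and one must compare $\psi_L$ with $\psi_\infty$. I would express $\psi_L$ through the two Dirichlet solutions: since $\psi_L$ solves $\Airyop\psi=0$ with $\psi_L(0)=1$, $\psi_L(L)=0$, and $\psi_\infty, \psi_{\ddd}$ span the solution space, one can write
\begin{align}\label{eq:psiLexp}
\psi_L(x)=\psi_\infty(x)-\frac{\psi_\infty(L)}{\psi_{\ddd}(L)}\,\psi_{\ddd}(x).
\end{align}
(The coefficient is well-defined once $\psi_{\ddd}(L)\neq 0$, which holds for large $L$ by \eqref{psibnd}.) Hence on $y\le x\le L$ we have $K_{\textsf{A}}(x,y)-K_{\textsf{A}}^{(L)}(x,y)=\frac{\psi_\infty(L)}{\psi_{\ddd}(L)}\,\psi_{\ddd}(x)\psi_{\ddd}(y)$, and similarly with the roles swapped on $x<y\le L$. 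Therefore the contribution of this region to $\|K_{\textsf{A}}-K_{\textsf{A}}^{(L)}\|_2^2$ is
\begin{align}\label{eq:midregion}
2\left(\frac{\psi_\infty(L)}{\psi_{\ddd}(L)}\right)^2\int_0^L\int_0^x \psi_{\ddd}(x)^2\psi_{\ddd}(y)^2\,dy\,dx
\le \left(\frac{\psi_\infty(L)}{\psi_{\ddd}(L)}\right)^2\left(\int_0^L \psi_{\ddd}(x)^2\,dx\right)^2.
\end{align}
Now I would use the asymptotics $\psi_{\ddd}(x)^{-1}\le e^{-(2/3-\eps)x^{3/2}}$ from \eqref{psibnd} together with $\psi_\infty(x)\le e^{-(2/3-\eps)x^{3/2}}$ from the remark after Proposition~\ref{pr:AirySL}, and the growth bound $\psi_{\ddd}(x)\le e^{(2/3+\eps)x^{3/2}}$, to show that the right-hand side of \eqref{eq:midregion} is bounded by $C\, e^{-2(2/3-\eps)L^{3/2}}\cdot e^{2(4/3+2\eps)L^{3/2}}$ — wait, one must be careful: $\int_0^L\psi_{\ddd}(x)^2 dx$ grows like $e^{(4/3+\eps)L^{3/2}}$ up to polynomial factors, while $\psi_\infty(L)/\psi_{\ddd}(L)$ decays like $e^{-(4/3-\eps)L^{3/2}}$; squaring, the product behaves like $e^{-2(4/3-\eps)L^{3/2}}\,e^{2(4/3+\eps)L^{3/2}}$, which does \emph{not} obviously vanish. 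The fix, which is the main subtlety, is to keep the nested structure $\int_0^L\int_0^x\psi_{\ddd}(x)^2\psi_{\ddd}(y)^2\,dy\,dx$ and use the sharper Laplace-type estimate $\int_0^x\psi_{\ddd}(y)^2\,dy\le C\,\psi_{\ddd}(x)^2/\sqrt{x}$ coming from \eqref{ineq:prodpsi}, so that the middle-region contribution is at most $C(\psi_\infty(L)/\psi_{\ddd}(L))^2\int_0^L \psi_{\ddd}(x)^4 x^{-1/2}\,dx\le C'(\psi_\infty(L)/\psi_{\ddd}(L))^2 \psi_{\ddd}(L)^4 L^{-1} = C'\psi_\infty(L)^2\psi_{\ddd}(L)^2 L^{-1}$, and then $\psi_\infty(L)^2\psi_{\ddd}(L)^2=\big(\psi_{\ddd}(L)\psi_\infty(L)\big)^2\to 0$ since $\psi_{\ddd}(x)\psi_\infty(x)\sim \tfrac12 x^{-1/2}$ by \eqref{Airy_infty} and \eqref{Airysqr}.

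Combining the two regions gives $\|K_{\textsf{A}}-K_{\textsf{A}}^{(L)}\|_2^2\to 0$ a.s.~as $L\to\infty$. I expect the main obstacle to be exactly the bookkeeping in the middle region: one has to exploit the cancellation between the exponentially large $\int_0^L\psi_{\ddd}^2$ and the exponentially small Wronskian-type coefficient $\psi_\infty(L)/\psi_{\ddd}(L)$, and the clean way to see it is to recognize that the whole middle-region error collapses to (a constant multiple of) $\big(\psi_{\ddd}(L)\psi_\infty(L)\big)^2$ times a subpolynomial factor, after which the decay $\psi_{\ddd}\psi_\infty=O(x^{-1/2})$ finishes the argument. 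The tail region is routine once (A3) is in hand.
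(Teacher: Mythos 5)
Your argument is correct and essentially coincides with the paper's proof: the same split of the Hilbert--Schmidt norm into $[0,L]^2$ and its complement, the tail handled by the a.s.\ finiteness of $\|K_{\textsf{A}}\|_2$ (condition (A3)), and the same representation $\psi_L=\psi_\infty-\frac{\psi_\infty(L)}{\psi_{\ddd}(L)}\psi_{\ddd}$, so that the middle-region error reduces to products of $\psi_{\ddd}$-integrals controlled by the Laplace-type bounds \eqref{ineq:prodpsi} together with the decay of $\psi_{\ddd}\psi_\infty$. The only difference is bookkeeping in the middle region: the paper keeps the factorized form and applies \eqref{ineq:prodpsi} at $y=L$, bounding $\bigl(\psi_{\ddd}(L)^{-2}\int_0^L\psi_{\ddd}^2\bigr)^2\bigl(\psi_{\ddd}(L)\psi_\infty(L)\bigr)^2$ by $CL^{-2}$, whereas your nested route invokes a fourth-power estimate $\int_0^L\psi_{\ddd}(x)^4x^{-1/2}\,dx\le C\psi_{\ddd}(L)^4L^{-1}$ that is not literally contained in \eqref{ineq:prodpsi} but follows by the same argument from \eqref{Airysqr} --- and in fact your own factorized bound (the right-hand side of your display for the middle region) already suffices once \eqref{ineq:prodpsi} is used there with $y=L$, so no new estimate is needed.
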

\begin{lemma}[Convergence of the truncated operators]\label{l:Step2}
For any fixed $L>0$ we have
\begin{align*}
\|K_{\textsf{A}}^{(L)}-K_{\textsf{G},2a}^{(L)}\|_2^2\to 0 \mbox{ a.s.~as }a\to \infty\,.
\end{align*}
\end{lemma}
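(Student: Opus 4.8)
The plan is to show that the kernels $K_{\textsf{G},2a}^{(L)}$ converge to $K_{\textsf{A}}^{(L)}$ uniformly on $[0,L]^2$ as $a\to\infty$, which on a bounded square immediately upgrades to $L^2$ convergence and hence gives the Hilbert--Schmidt statement. Since both kernels have the same bilinear split structure (a product of a Dirichlet solution and a right-endpoint solution, with the roles of $x,y$ exchanged across the diagonal), it suffices to prove that on $[0,L]$ we have $\tilde\phi_{\ddd}\to\psi_{\ddd}$ and $\tilde\phi_L\to\psi_L$ uniformly, a.s., together with a uniform upper bound on $\tilde\phi_{\ddd},\tilde\phi_L$ so that products converge. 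The function $\psi_L$ depends continuously on the value $\psi_{\ddd}(L)\neq 0$ (it is a fixed linear combination of $\psi_{\ddd}$ and $\psi_\infty$, or equivalently the solution pinned at the two endpoints), so once we control the Dirichlet solutions and the solution with initial data $(1,0)$ this also handles $\tilde\phi_L$.

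The core is therefore a comparison of the two SDE systems. Recall from Proposition \ref{pr:hardSL} that $\phi_{\ddd}^{(2a)}$ solves \eqref{sdehard} with $\lambda=a^2$, and from \eqref{phitilde} that $\tilde\phi_{\ddd}(x)=a^{2/3}m_{2a}^{1/2}(a^{-2/3}x)\,\phi_{\ddd}^{(2a)}(a^{-2/3}x)$. I would substitute the explicit form $m_{2a}(u)=\exp(-(2a+1)u-\tfrac2{\sqrt\beta}B_{2a}(u))$ and the scaling relation $B_{2a}(u)=a^{-1/3}B(a^{2/3}u)$, change variables $u=a^{-2/3}x$, and apply It\^o's formula to the conjugated, rescaled process $g(x):=m_{2a}^{1/2}(a^{-2/3}x)\phi_{\ddd}^{(2a)}(a^{-2/3}x)$ and its derivative. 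The conjugation by $m^{1/2}$ is exactly what removes the first-order (drift-on-$\phi'$) term and turns the S-L operator into Schr\"odinger form; after the parabolic scaling by $a^{2/3}$ the resulting system for $(g,g')$ should read, up to a normalization constant, $dg=g'\,dx$, $dg'=g\big(\tfrac2{\sqrt\beta}dB + x\,dx\big) + \mathcal E_a(x)\,dx$, where $\mathcal E_a$ is an error term coming from (i) expanding $\lambda e^{-a^{-2/3}x}=a^2(1-a^{-2/3}x+O(a^{-4/3}x^2))$ — the $a^2$ cancels the bottom of the potential, the linear term produces the $x$, and the remainder is $O(a^{-2/3})$ on compacts — and (ii) lower-order contributions of the drift coefficient $2a+\tfrac2\beta$ after conjugation, which similarly produce $O(a^{-2/3})$ corrections times bounded functionals of $B$ on $[0,L]$. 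The same computation run on $\phi$ with initial data $(1,0)$ gives the companion estimate.

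Once the hard-edge rescaled system is written as a perturbation of the Airy system \eqref{AirySDE_1} with an error that is $O(a^{-2/3})$ uniformly on $[0,L]$ (times polynomial-in-$L$ factors and the local modulus of continuity of $B$, all a.s.\ finite), I would close the argument with a Gr\"onwall estimate for the difference of the two solutions driven by the \emph{same} Brownian motion $B$: writing $\Delta=(g-\psi_{\ddd}, g'-\psi_{\ddd}')$, the difference satisfies a linear SDE with coefficients bounded on $[0,L]$ and an inhomogeneous term of size $O(a^{-2/3})$, so $\sup_{[0,L]}|\Delta|\to 0$ a.s. This gives uniform convergence of the Dirichlet (and the $(1,0)$) solutions; continuity of the endpoint map then gives $\tilde\phi_L\to\psi_L$ uniformly (using here that a.s.\ $\psi_{\ddd}(L)\neq 0$, which holds for all but countably many $L$, and the convergence lets us also conclude $\phi_{\ddd}(a^{-2/3}L)\neq 0$ for large $a$ so that $\tilde\phi_L$ is well-defined); and the uniform bounds make the products converge, yielding $\|K_{\textsf{A}}^{(L)}-K_{\textsf{G},2a}^{(L)}\|_2\to 0$.

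I expect the main obstacle to be the bookkeeping in the It\^o/change-of-variables step: one must carefully track how the martingale part $\tfrac2{\sqrt\beta}\phi'\,dB_{2a}$, the drift $(2a+\tfrac2\beta)\phi'$, and the potential term $\lambda e^{-x}\phi$ transform under simultaneous conjugation by $m_{2a}^{1/2}$ and parabolic rescaling, and verify that every term that does \emph{not} survive in the Airy limit is genuinely $O(a^{-2/3})$ on compacts rather than merely $o(1)$ — in particular checking that the quadratic-variation contribution of $B_{2a}$ (which is $\tfrac4\beta$ per unit time after rescaling, matching the Airy white-noise normalization) is reproduced exactly, with the discrepancy from the $e^{-x}$ expansion and the constant drift absorbed into the error. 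A secondary technical point is making the Gr\"onwall step rigorous for a linear SDE with random but locally bounded coefficients and handling the (null) event where $\psi_{\ddd}$ has a zero exactly at $L$ by choosing the truncation sequence $L$ accordingly, as already anticipated in the proof of Theorem \ref{thm:hardtosoft}.
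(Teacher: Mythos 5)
Your overall strategy coincides with the paper's: reduce Lemma \ref{l:Step2} to uniform convergence of the kernels on $[0,L]^2$, which follows once $\tilde\phi_{\ddd}\to\psi_{\ddd}$ and $\tilde\phi_L\to\psi_L$ uniformly on $[0,L]$, and prove this by recasting the rescaled hard-edge eigenvalue SDE as a small perturbation of the Airy system \eqref{AirySDE_1} driven by the same Brownian motion. But two steps fail as written. First, the conjugation: since $m_{2a}^{1/2}(a^{-2/3}x)=e^{-(a^{1/3}+\frac{a^{-2/3}}{2})x-\frac{a^{-1/3}}{\sqrt\beta}B(x)}$, your $g(x)=a^{2/3}m_{2a}^{1/2}(a^{-2/3}x)\phi_{\ddd}(a^{-2/3}x)$ equals $\hat u(x)\,e^{-\frac{a^{-2/3}}{2}x-\frac{a^{-1/3}}{\sqrt\beta}B(x)}$ and therefore carries a genuine martingale part of order $a^{-1/3}$; it is not $C^1$ in $x$, so the asserted system $dg=g'\,dx$, $dg'=g\big(\tfrac2{\sqrt\beta}dB+x\,dx\big)+\mathcal E_a(x)\,dx$ is not what It\^o's formula produces (the first equation itself acquires a $dB$ term, the error in the second equation also has a $dB$ part, and the errors are of order $a^{-1/3}$, not $a^{-2/3}$). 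The paper avoids this by conjugating with the \emph{deterministic} factor $e^{-a^{1/3}x}$ (Proposition \ref{prop:sdelim}), obtaining the system \eqref{uhat} with small errors $F_1\,dx+F_2\,dB$, and only at the end multiplying by the leftover random factor, which tends to $1$ uniformly on $[0,L]$. Relatedly, your closing Gr\"onwall step does not close pathwise: the difference $\Delta$ of the two solutions satisfies a linear SDE containing $\tfrac2{\sqrt\beta}\Delta\,dB$ plus a small $dB$-inhomogeneity, so $\sup_{[0,L]}|\Delta|$ cannot be bounded by a deterministic Gr\"onwall argument; one needs a stochastic Gr\"onwall inequality or, as in the paper, a.s.\ continuity of the stochastic flow in the parameter $\eps=a^{-1/3}$ (Protter, Ch.~7, Thm.~37).

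Second, the companion solution used to build $\tilde\phi_L$: running the computation on the hard-edge solution with initial data $(1,0)$ does not give the claimed companion estimate, because under the rescaling the relevant initial data are $\big(a^{2/3}u(0),\,u'(0)-a\,u(0)\big)$, which for $(1,0)$ diverge; the rescaled Neumann-type hard-edge solution does not converge to the Airy Neumann solution. One must take $a$-dependent data, e.g.\ $\phi_*(0)=a^{-2/3}$, $\phi_*'(0)=a^{1/3}$ as in the paper's proof of Lemma \ref{l:SDE_lim}, so the rescaled data tend to $(1,0)$, and then express $\tilde\phi_L$ by linearity, using that for fixed $L$ one has $\psi_{\ddd}(L)\neq 0$ with probability one (no special choice of $L$ is needed). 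With these two repairs (deterministic conjugation plus a factor converging to one, tuned initial data, and a flow/stochastic-Gr\"onwall comparison) your argument becomes the paper's proof; the remaining step, that uniform convergence of the kernels on the bounded square implies convergence in Hilbert--Schmidt norm, is correct as you state it.
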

\begin{lemma}[Truncation of the Bessel operator]\label{l:Step3}
With probability $1$, we have,
\begin{align*}
\lim\limits_{L\to \infty} \limsup\limits_{a\to \infty} \|K_{\textsf{G},2a}-K_{\textsf{G},2a}^{(L)}\|_2^2=0\,.
\end{align*}
\end{lemma}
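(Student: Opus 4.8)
The plan is to mimic the proof of Lemma~\ref{l:Step1}: reduce the Hilbert--Schmidt norm to integrals involving only the rescaled Dirichlet diffusion $\tilde\phi_\ddd$, and then feed in the uniform-in-$a$ asymptotics of this diffusion established in Section~\ref{s:Step3proof}. Throughout we work on the a.s.\ event $\{a>C_{\text{ev}}\}$ of Corollary~\ref{prop:eigenvalue}, where $K_{\textsf{G},2a}$ is a genuine integral kernel. First I would split according to whether $\max(x,y)>L$. On $\{\max(x,y)\le L\}$ both kernels are assembled from $\tilde\phi_\ddd$ and from $\tilde\phi_\infty$, resp.\ $\tilde\phi_L$, so the difference only involves $\tilde\phi_\infty-\tilde\phi_L$; on $\{\max(x,y)>L\}$ the truncated kernel vanishes and only the tail of $K_{\textsf{G},2a}$ remains. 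Hence $\|K_{\textsf{G},2a}-K_{\textsf{G},2a}^{(L)}\|_2^2=I_1+I_2$, where $I_1$ is the integral of $|K_{\textsf{G},2a}|^2$ over $\{\max(x,y)>L\}$ and $I_2$ the integral of $|K_{\textsf{G},2a}-K_{\textsf{G},2a}^{(L)}|^2$ over $\{\max(x,y)\le L\}$.

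The next step is the hard-edge analogue of (\ref{Airy_infty}). Combining the second Wronskian identity in (\ref{Wr_1}) with the definitions (\ref{phitilde}) and the relation $m_{2a}(x)s_{2a}(x)=e^{-x}$ gives $\tilde\phi_\ddd\tilde\phi_\infty'-\tilde\phi_\ddd'\tilde\phi_\infty=-e^{-a^{-2/3}x}$, and reduction of order then yields
\[
\tilde\phi_\infty(x)=\tilde\phi_\ddd(x)\,\rho_a(x),\qquad \tilde\phi_L(x)=\tilde\phi_\ddd(x)\bigl(\rho_a(x)-\rho_a(L)\bigr),\qquad \rho_a(x):=\int_x^\infty\frac{e^{-a^{-2/3}u}}{\tilde\phi_\ddd(u)^2}\,du
\]
past the finitely many zeros of $\tilde\phi_\ddd$; the second identity holds because $\tilde\phi_\infty-\tilde\phi_L$ solves the eigenvalue equation and vanishes at $0$, hence is a multiple of $\tilde\phi_\ddd$, the multiple being $\tilde\phi_\infty(L)/\tilde\phi_\ddd(L)=\rho_a(L)$. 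Consequently $K_{\textsf{G},2a}-K_{\textsf{G},2a}^{(L)}=\rho_a(L)\,\tilde\phi_\ddd(x)\tilde\phi_\ddd(y)$ on $\{\max(x,y)\le L\}$, so that
\[
I_2=\rho_a(L)^2\Bigl(\int_0^L\tilde\phi_\ddd(u)^2\,du\Bigr)^2,\qquad I_1\le 2\int_L^\infty\Bigl(\tilde\phi_\ddd(x)^2\rho_a(x)^2\int_0^x\tilde\phi_\ddd(y)^2\,dy+\tilde\phi_\ddd(x)^2\int_x^\infty\tilde\phi_\ddd(y)^2\rho_a(y)^2\,dy\Bigr)dx,
\]
where for $I_1$ I used the symmetry of the kernel and again $\tilde\phi_\infty=\tilde\phi_\ddd\,\rho_a$.

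At this point everything is expressed through $\tilde\phi_\ddd$, and I would invoke from Section~\ref{s:Step3proof} that, with probability one, there exist a random $C<\infty$, a random $a_0$ and a deterministic $x_0$ such that for all $a\ge a_0$ and $x\ge x_0$,
\[
\tilde\phi_\ddd(x)\ge e^{(2/3-\eps)x^{3/2}},\qquad \sqrt{x}\int_0^x\tilde\phi_\ddd(u)^2\,du\le C\,\tilde\phi_\ddd(x)^2,\qquad \sqrt{x}\int_x^\infty\tilde\phi_\ddd(u)^{-2}\,du\le C\,\tilde\phi_\ddd(x)^{-2};
\]
these are the hard-edge counterparts of (\ref{psibnd})--(\ref{ineq:prodpsi}), and in our coupling they hold because $\tilde\phi_\ddd$ is uniformly close to the Airy diffusion $\psi_\ddd$ on the relevant range, while for $x$ comparable to or larger than $a^{2/3}$ the diffusion is already exponentially large (uniformly in $a$) so that range contributes negligibly. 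Since $\rho_a(x)\le\int_x^\infty\tilde\phi_\ddd(u)^{-2}\,du\le C\,x^{-1/2}\tilde\phi_\ddd(x)^{-2}$, substituting the three bounds into the formulas above exactly as in the proof of Lemma~\ref{l:Step1} gives $I_2\le C^4/L^2$ and $I_1\le C'/\sqrt L$ for all $a\ge a_0$ and $L\ge x_0$. Letting $a\to\infty$ and then $L\to\infty$ yields $\lim_{L\to\infty}\limsup_{a\to\infty}\|K_{\textsf{G},2a}-K_{\textsf{G},2a}^{(L)}\|_2^2=0$ a.s., which is the claim.

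I expect the uniform-in-$a$ control of $\tilde\phi_\ddd$ to be the main obstacle. In the Airy case one deals with a single diffusion, for which (\ref{Airysqr}) already supplies $\psi_\ddd'/(\psi_\ddd\sqrt x)\to1$; here one must control the Riccati quantity $\tilde\phi_\ddd'/\tilde\phi_\ddd$ for the whole family indexed by $a$, uniformly, and in particular handle the transition near $x\sim a^{2/3}$, where the coefficient factor $e^{-a^{-2/3}x}$ (the $e^{-x}$ potential of $\mathfrak{G}_{\beta,2a}$) ceases to be $\approx 1$. This is carried out in Section~\ref{s:Step3proof} through comparison and coupling estimates for the underlying diffusions, which also deliver Corollary~\ref{prop:eigenvalue}.
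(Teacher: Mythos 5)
Your overall strategy lines up with the paper's: decompose according to whether $\max(x,y)\le L$, use the rescaled Wronskian identity to write $\tilde\phi_{\infty}=\tilde\phi_{\ddd}\,\rho_a$ and $\tilde\phi_\infty-\tilde\phi_L=\rho_a(L)\,\tilde\phi_{\ddd}$, and reduce everything to uniform-in-$a$ estimates on $\tilde\phi_{\ddd}$. Your rescaled Wronskian $\tilde\phi_{\ddd}\tilde\phi_\infty'-\tilde\phi_{\ddd}'\tilde\phi_\infty=-e^{-a^{-2/3}x}$ is correct and appears in the paper's Proposition~\ref{prop:prod}. However, the three uniform bounds you then invoke as ``hard-edge counterparts of (\ref{psibnd})--(\ref{ineq:prodpsi})'' are not what Section~\ref{s:Step3proof} delivers, and they are false for $x\gtrsim a^{2/3}$. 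In that range the underlying Riccati diffusion $p(t)$ (with $t=a^{-2/3}x$) has entered its time-stationary regime $p(t)\asymp a$, so the log-derivative of $\tilde\phi_{\ddd}$ saturates at order $a^{1/3}$ and stops growing like $\sqrt{x}$. Hence $\tilde\phi_{\ddd}(x)$ grows like $e^{ca^{1/3}x}$ rather than $e^{(2/3)x^{3/2}}$, and the ratios $\sqrt{x}\,\tilde\phi_{\ddd}(x)^{-2}\int_0^x\tilde\phi_{\ddd}^2$ and $\sqrt{x}\,\tilde\phi_{\ddd}(x)^{2}\int_x^\infty\tilde\phi_{\ddd}^{-2}$ behave like $\sqrt{x}/a^{1/3}$, which diverges. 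Your $I_1$ integrates over all $x\ge L$ and therefore always reaches the regime $x\ge a^{2/3}t_0$, so the claimed $I_1\le C'/\sqrt{L}$ does not follow from the estimates as you stated them. A related loss occurs when you bound $\rho_a(x)\le\int_x^\infty\tilde\phi_{\ddd}^{-2}$, discarding the factor $e^{-a^{-2/3}u}$ which is exactly what supplies decay in the flat regime.

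The paper's mechanism is a dichotomy of estimates, not a single uniform Airy-type bound. Propositions~\ref{prop:phibnd}, \ref{prop:Ibnd} and \ref{prop:prod} give $\tilde\phi_{\ddd}(x)\tilde\phi_{\infty}(x)\le c\,x^{-1/2}$ together with Gaussian-type decay of $\tilde\phi_{\ddd}(y)^{-2}\tilde\phi_{\ddd}(x)^2$ only for $L\le x< a^{2/3}t_0$, and a \emph{separate} exponentially decaying bound $\tilde\phi_{\ddd}(x)\tilde\phi_\infty(x)\le c\,a^{-1/3}e^{-a^{-2/3}x/2}$ with its own companion estimate for $x\ge a^{2/3}t_0$; the proof of Lemma~\ref{l:Step3} then splits the tail regions $\mathcal{R}_1,\mathcal{R}_2$ at $a^{2/3}t_0$ and handles the two pieces with the two sets of bounds. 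Your concluding remark that the range $x\gtrsim a^{2/3}$ ``contributes negligibly'' names the right obstacle, but it is precisely the nontrivial content of Section~\ref{s:Step3proof}; the single family of bounds you wrote down cannot establish it, so the proof as proposed has a genuine gap.
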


\medskip

We prove Lemma \ref{l:Step1} in Section \ref{s:Step1proof} using the  the asymptotics (\ref{psibnd}). The proof of Lemma \ref{l:Step2} is given in Section \ref{s:Step2proof}, we will  show that for  a fixed $L<\infty$ the kernel $K_{\textsf{G},2a}^{(L)}$ converges uniformly to $K_{\textsf{A}}^{(L)}$ on $[0,L]^2$ as $a\to \infty$.
 Finally, the proof of Lemma \ref{l:Step3} will be given in Section \ref{s:Step3proof}, and it will rely on a careful analysis of the asymptotic behavior of $\phi_{\ddd}^{(2a)}$. 
%

\section{Truncation of the Airy operator} \label{s:Step1proof}

We analyze the solutions of the SDE  (\ref{AirySDE_1}) via the Riccati transform  $\frac{\psi'(t)}{\psi(t)}$. Suppose that $\psi, \psi'$ is the strong solution of the SDE (\ref{AirySDE_1}) with deterministic initial conditions $\psi(0)=c_0$, $\psi'(0)=c_1$, $(c_0,c_1)\neq (0,0)$. Set $X(t)=\frac{\psi'(t)}{\psi(t)}$, by 
%
%
%
It\^o's formula  $X$ satisfies the SDE 
\begin{align}\label{AirySDE}
dX(t)=(t-X(t)^2)dt+\tfrac{2}{\sqrt{\beta}}dB(t),
\end{align}
with initial condition $X(0)=c_1/c_0$. The initial condition is $\infty$ if $c_0=0$, $c_1\neq 0$. Note that the diffusion blows up to $-\infty$ at the zeros of $\psi$, and it restarts at $\infty$ instantaneously  whenever this happens. 

The drift in (\ref{AirySDE}) vanishes on the parabola  $x^2=t$, it is positive for $|x|<\sqrt{t}$, and negative for $|x|>\sqrt{t}$. This suggests that the asymptotic behavior of $X(t)$ should be $\sqrt{t}$ (since the branch $x=-\sqrt{t}$ is unstable), as stated in (\ref{Airysqr}). The proposition below proves this statement by providing quantitative bounds on $|X(t)-\sqrt{t}|$. See Figure \ref{fig:asymp_X} for an illustration of the asymptotic behavior of $X$. Note that less precise asymptotic bounds on $X$ were also proved in \cite{DL2} for the study of the small $\beta$ limit.

\begin{proposition}\label{pr:Airy}
Let $\psi, \psi'$ be the strong solution of (\ref{AirySDE_1}) with deterministic initial conditions $\psi(0)=c_0$, $\psi'(0)=c_1$, $(c_0,c_1)\neq (0,0)$. Let $X(t)=\frac{\psi'(t)}{\psi(t)}$. Then there is an a.s.~finite random time $T$ such that 
\begin{align}\label{Airybnd}
|X(t)-\sqrt{t}|\leq t^{-1/4}\ln  t, \quad \text{ for all $t\ge T$}.
\end{align}
\end{proposition}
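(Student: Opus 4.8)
The plan is to control $X(t)$ via a comparison/localization argument around the stable branch $x=\sqrt{t}$ of the drift parabola, with the error threshold $t^{-1/4}\ln t$ chosen to match the strength of the restoring force near that branch. First I would change variables to $Y(t)=X(t)-\sqrt{t}$; since $dX=(t-X^2)\,dt+\tfrac{2}{\sqrt\beta}dB$ and $t-X^2=-(X-\sqrt t)(X+\sqrt t)=-Y(2\sqrt t+Y)$, we get
\begin{align*}
dY(t)=-Y(t)\bigl(2\sqrt t+Y(t)\bigr)\,dt-\tfrac{1}{2\sqrt t}\,dt+\tfrac{2}{\sqrt\beta}\,dB(t).
\end{align*}
So near $Y=0$ the drift is approximately $-2\sqrt t\,Y$, a linear restoring force with rate $\sim 2\sqrt t\to\infty$; balancing this against the $O(1)$ noise input over the relevant relaxation timescale $t^{-1/2}$ suggests fluctuations of order $t^{-1/4}$, and the extra logarithm absorbs the slowly growing tails of the Gaussian fluctuations and the $-\tfrac{1}{2\sqrt t}$ drift term. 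The main work is to turn this heuristic into an a.s.~bound valid for all large $t$.

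The key steps I would carry out, in order: (1) First establish a crude a priori bound — e.g.~that a.s.~$|X(t)|\le t^{2/3}$ (or any sub-$\sqrt t$-in-the-relevant-sense bound) for $t$ large, and more importantly that $X(t)$ is eventually positive and of order $\sqrt t$ — ruling out the unstable branch $x=-\sqrt t$ and the blow-up behaviour; this can be done by a Lyapunov/comparison argument showing that once $X$ enters a neighbourhood of $\sqrt t$ it is very unlikely to leave, and that it enters such a neighbourhood a.s. (2) On the event that $X$ stays in, say, $[\tfrac12\sqrt t,2\sqrt t]$ for $t\ge t_0$, freeze the comparison: the process $Y$ then satisfies $dY\le -\sqrt t\,Y\,dt+\tfrac{2}{\sqrt\beta}dB$ when $Y>0$ and the mirrored inequality when $Y<0$, so $|Y|$ is dominated by a time-inhomogeneous Ornstein–Uhlenbeck-type process $Z$ with mean-reversion rate $\sqrt t$. (3) Use an explicit integrating-factor representation $Z(t)=e^{-\int_{t_0}^t\sqrt s\,ds}Z(t_0)+\tfrac{2}{\sqrt\beta}\int_{t_0}^t e^{-\int_u^t\sqrt s\,ds}\,dB(u)$, bound the variance of the stochastic integral by $\int_{t_0}^t e^{-2\int_u^t\sqrt s\,ds}du=O(t^{-1/2})$, and apply a Borel–Cantelli argument over a discretization $t_n$ (with a maximal inequality, e.g.~Doob's $L^2$ inequality or a Gaussian tail bound, to bridge between grid points) to conclude $|Z(t)|\le t^{-1/4}\ln t$ eventually a.s. (4) Finally, feed this back: the bound $|Y(t)|\le t^{-1/4}\ln t$ is itself much stronger than $|Y|\le\tfrac12\sqrt t$, so a standard bootstrap/no-first-exit argument closes the loop and shows the confinement event in step (2) indeed has probability one.

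The main obstacle I expect is step (1)/(4): making the confinement argument rigorous and self-consistent. The drift is only locally restoring — if $Y$ is very negative then $-Y(2\sqrt t+Y)$ can change sign (the $x=-\sqrt t$ branch), and of course $X$ genuinely does hit $-\infty$ in finite time when $\psi$ has a zero, so one must argue that after the \emph{last} zero of $\psi$ (which exists a.s.~because the soft-edge spectrum is bounded below, equivalently $\Airyop^{-1}$ is a bounded operator) the diffusion is trapped near $+\sqrt t$ forever. Quantifying "the last zero occurs before some a.s.~finite time $T_0$" and then propagating the bound uniformly for all $t\ge T$ without a union bound over uncountably many times is the delicate part; the discretization together with a pathwise monotonicity/comparison estimate between grid points is the tool that makes it work. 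The stochastic-integral variance estimate and the Borel–Cantelli step are routine by comparison.
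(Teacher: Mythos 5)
Your reduction to $Y(t)=X(t)-\sqrt t$ and your steps (2)--(3) are essentially the paper's own argument for the ``stays close'' half (Lemma \ref{l:Airy_stays}): the paper likewise sandwiches $Y$ between two diffusions with linear drift $-2\sqrt t\,Y$ (one carrying an extra $-2f(t)^2$ term to absorb $-Y^2$ and $-\tfrac{1}{2\sqrt t}$), solves them by an integrating factor, and controls the Gaussian convolution term --- there via a time change to a Brownian motion and the law of the iterated logarithm, where you propose a discretization plus Borel--Cantelli; these are interchangeable. The genuine gap is your step (1), which is where most of the paper's work lies (Lemma \ref{l:Airy_close}). You assert that $X$ a.s.\ enters a neighbourhood of $\sqrt t$ ``by a Lyapunov/comparison argument'', but the difficulty is precisely that the diffusion can find itself anywhere, including near the unstable branch $-\sqrt t$, and it blows up to $-\infty$ and restarts at $+\infty$. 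The paper handles this with a four-case analysis on the starting point: above $\sqrt t$, below $-\sqrt t$, strictly between the branches, or within $f(t_0)$ of $-\sqrt{t_0}$; the first three cases compare with noise-frozen ODEs on a short window $[t_0,t_0+t_0^{-1/2}\ln^3 t_0]$ on the event that the Brownian increment is small, while the last case requires a separate quantitative exit-time estimate (scale function and speed measure of a time-homogeneous comparison diffusion, transferred back by Girsanov) to show the process leaves that neighbourhood quickly with probability tending to $1$. None of this is supplied or sketched in your proposal, and your fallback justification --- that the last zero of $\psi$ exists because the spectrum of the Airy operator is bounded below --- is circular in this paper's logical order: Proposition \ref{pr:AirySL} uses the asymptotics (\ref{Airysqr}), i.e.\ the present proposition, to show that $\psi_\ddd$ has a largest zero and to construct $\psi_\infty$, so operator-level facts cannot be invoked here.

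A smaller point: your step (4) is phrased as showing that ``the confinement event in step (2) has probability one'', but for fixed $t_0$ it does not; it only has probability tending to $1$ as $t_0\to\infty$, which is exactly the form of Lemma \ref{l:Airy_stays}. The correct closing move, as in the paper, is: fix $\eps>0$, choose $s$ so that the staying probability exceeds $1-\eps$ uniformly over admissible entry points, use the a.s.\ finiteness of the hitting time $\sigma_s$ of the tube together with the strong Markov property to conclude that (\ref{Airybnd}) holds eventually with probability at least $1-\eps$, and then let $\eps\to0$. Your bootstrap remark (that $|Y|\le t^{-1/4}\ln t$ is stronger than the confinement band used to justify the comparison) is fine and is implicitly present in the paper's coupling event, but it does not by itself upgrade the high-probability statement to an almost sure one.
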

Our upper bound in (\ref{Airybnd}) is not  optimal. In fact by evaluating the error terms in the proof given below it can be shown  that  $t^{-1/4}\ln  t$  can be replaced with $ t^{-1/4}\sqrt{\ln  t}\; g(t)$ for any positive function $g(t)$ satisfying $\lim_{t\to \infty}g(t) =\infty$.

%

\begin{figure}
\centering
\includegraphics[width = 8cm]{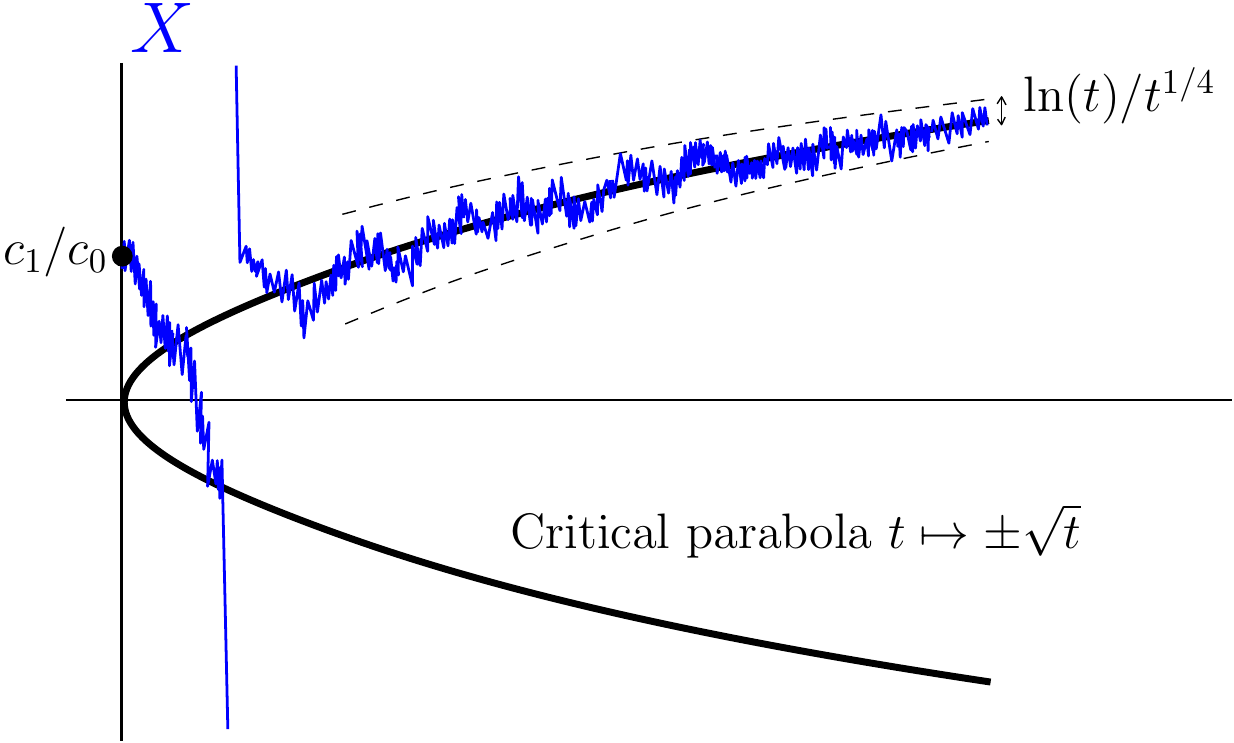}
\caption{Schematic illustration of the asymptotic behavior of the diffusion $X$}\label{fig:asymp_X}
\end{figure}

The proof of Proposition \ref{pr:Airy} relies on the following two technical lemmas, whose proofs are postponed to Section \ref{s:Xlemmas}.

\begin{lemma}\label{l:Airy_close}
Let $X$ be a strong solution of the SDE (\ref{AirySDE}). For a given $s\ge 10$ set
\begin{align}\label{sigma_s}
\sigma_s=\inf \left\{t \geq s\;:\; |X(t)-\sqrt{t}|\le \tfrac12 t^{-1/4}\ln  t\right\}.
\end{align}
Then $\sigma_s$ is a.s.~finite.
\end{lemma}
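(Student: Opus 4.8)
The plan is to show that the diffusion $X$ solving \eqref{AirySDE} cannot stay far from the stable branch $\sqrt{t}$ forever. The key heuristic is that when $X(t)$ is above $\sqrt{t}+\tfrac12 t^{-1/4}\ln t$ the drift $t-X(t)^2$ is strongly negative (of order $-t^{1/4}\ln t$), pushing $X$ down; and when $X(t)$ is below $\sqrt t - \tfrac12 t^{-1/4}\ln t$ but still positive, the drift is strongly positive, pushing $X$ up; while the branch $-\sqrt t$ is unstable so a trajectory cannot linger near it. The Brownian fluctuation over a window of length $\Delta$ is of order $\sqrt{\Delta}$, which on the relevant time scales is negligible compared to the drift-induced displacement. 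So a proof by contradiction should work: assume $\sigma_s=\infty$ on a positive-probability event and derive that the drift forces $X$ to cross the band $\{|x-\sqrt t|\le \tfrac12 t^{-1/4}\ln t\}$, contradicting the definition of $\sigma_s$.

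First I would split the complement of the band into the three regions $R_+=\{x>\sqrt t+\tfrac12 t^{-1/4}\ln t\}$, $R_-=\{0\le x<\sqrt t-\tfrac12 t^{-1/4}\ln t\}$ (more carefully, $\{|x|<\sqrt t\}$ with $x$ not too close to $-\sqrt t$), and the region near or below $-\sqrt t$. In $R_+$, using $X^2-t\ge (X-\sqrt t)(X+\sqrt t)\ge (\tfrac12 t^{-1/4}\ln t)(2\sqrt t)=t^{1/4}\ln t$, the drift is $\le -t^{1/4}\ln t$; comparing with the Brownian part via an Ito/Gronwall estimate (or a direct comparison to the deterministic ODE $\dot y=-t^{1/4}\ln t$) shows $X$ descends below $\sqrt t+\tfrac12 t^{-1/4}\ln t$ within a time of order $t^{-1/4}$, well before the moving band itself shifts appreciably. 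Symmetrically, in the part of $R_-$ with $x\ge 0$ the drift is $\ge t^{1/4}\ln t$ and pushes $X$ up into the band. For the region near $-\sqrt t$ and below: the branch $x=-\sqrt t$ is repulsive, so once $X$ is below it the drift is again negative and $X$ continues to decrease, but then $\psi$ has a zero and $X$ restarts at $+\infty$, which lands it in $R_+$ and reduces to the first case. The one genuinely delicate subregion is a small neighborhood of $x=-\sqrt t$ where the drift is small; there I would use the stochastic term together with a submartingale/escape-time argument (e.g. a scale-function computation for the time-changed diffusion) to show $X$ leaves this neighborhood in finite time a.s., either upward toward the band or downward to a blow-up.

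The main obstacle, I expect, is making the heuristic "the drift wins over the Brownian fluctuation on the time scale $t^{-1/4}$" fully rigorous and uniform for all large $t$, i.e. controlling the supremum of the Brownian increment over sliding windows. The clean way is a Borel–Cantelli argument: along a deterministic sequence $t_k\to\infty$ (say $t_k = k$ or geometric), bound $P(\sup_{t\in[t_k,t_{k+1}]}|B(t)-B(t_k)|> \epsilon_k)$ and sum, so that a.s. for large $k$ the Brownian oscillation is smaller than the drift displacement over each window; then a deterministic comparison-ODE argument on each window closes the gap. A second, more structural obstacle is handling the instantaneous restart of $X$ at $+\infty$ when $\psi$ vanishes: one must argue that each excursion started from $+\infty$ returns to the band (reduce to the $R_+$ analysis, noting the drift is only more negative the larger $X$ is), and that only finitely many such restarts can occur before time $\sigma_s$ — or simply observe that the first restart after time $s$ already places $X$ in $R_+$ and invoke the $R_+$ estimate. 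Combining these, on the event $\{\sigma_s=\infty\}$ one contradicts the definition of $\sigma_s$, so $\sigma_s<\infty$ a.s.
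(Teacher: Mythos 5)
Your plan follows essentially the same route as the paper's proof: the paper likewise reduces the lemma to showing that, uniformly over the starting point $x_0$, the band $|x-\sqrt t|\le \tfrac12 t^{-1/4}\ln t$ is hit within a short time window with probability tending to one, splitting into the same regions (above the band, between the branches, near and below $-\sqrt t$), comparing $X$ with the deterministic Riccati ODE after controlling the supremum of the Brownian increment on the window, handling the delicate neighborhood of the unstable branch $-\sqrt t$ by a change of measure to a time-homogeneous diffusion plus a scale-function/speed-measure exit-time estimate, and using the restart at $+\infty$ after blow-up to reduce to the upper-region case. The only cosmetic difference is that the paper concludes directly from this quantitative window estimate rather than by your contradiction/Borel--Cantelli formulation.
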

\begin{lemma}\label{l:Airy_stays}
For a given $t_0>0, x_0\in \R$ consider the solution $X$ of the SDE (\ref{AirySDE}) on $[t_0,\infty)$ with initial condition $X(t_0)=x_0$, and denote by $P_{t_0, x_0}$ its distribution. Then
\begin{align}\label{Airy_close}
    \lim_{t_0\to \infty}\inf_{|x_0-\sqrt{t_0}|\le \frac12 t_0^{-1/4}\ln  t_0} P_{t_0,x_0}\left(|X(t)-\sqrt{t}|\leq t^{-1/4}\ln  t, \text{ for all }t\ge t_0\right)=1.
\end{align}
\end{lemma}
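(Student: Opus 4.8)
The plan is to linearize the SDE (\ref{AirySDE}) around the deterministic curve $t\mapsto\sqrt t$, exploit the strong restoring drift that this produces, and then control the residual noise uniformly in time by a chaining argument. First I would set $Y(t)=X(t)-\sqrt t$. Since $t-X(t)^2=-2\sqrt t\,Y(t)-Y(t)^2$, It\^o's formula turns (\ref{AirySDE}) into
\[
dY(t)=\Bigl(-2\sqrt t\,Y(t)-Y(t)^2-\tfrac{1}{2\sqrt t}\Bigr)\,dt+\tfrac{2}{\sqrt\beta}\,dB(t),\qquad Y(t_0)=x_0-\sqrt{t_0},
\]
whose leading drift $-2\sqrt t\,Y$ is a (time-growing) restoring force toward $0$. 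Introducing the integrating factor $\mu(t)=\tfrac43\bigl(t^{3/2}-t_0^{3/2}\bigr)$, so that $\mu'(t)=2\sqrt t$, the process $W(t)=e^{\mu(t)}Y(t)$ satisfies, up to the first time $X$ becomes infinite,
\[
W(t)=W(t_0)-\int_{t_0}^{t}e^{\mu(s)}\Bigl(Y(s)^2+\tfrac{1}{2\sqrt s}\Bigr)\,ds+M(t),\qquad M(t)=\tfrac{2}{\sqrt\beta}\int_{t_0}^{t}e^{\mu(s)}\,dB(s),
\]
where $M$ is a continuous martingale with $\langle M\rangle_t=\rho(t):=\tfrac4\beta\int_{t_0}^{t}e^{2\mu(s)}\,ds$, hence $M(t)=\widetilde B(\rho(t))$ for a standard Brownian motion $\widetilde B$. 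The point is that the event that will control $M$ depends only on $B|_{[t_0,\infty)}$ and not on $x_0$, which is exactly what will give uniformity over $x_0$ for free.

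Next, writing $b(t)=t^{-1/4}\ln t$, I would record two elementary deterministic facts valid for $t\ge t_0$ with $t_0$ large (the only regime relevant to the statement): (i) $e^{\mu(t)}b(t)$ is increasing, because $2\sqrt t+b'(t)/b(t)>0$, so $e^{\mu(t)}b(t)\ge b(t_0)$; and (ii) a Laplace-type estimate — integrate by parts using $\mu'(s)=2\sqrt s$ and absorb the lower-order term, together with $b(s)^2+\tfrac{1}{2\sqrt s}\le 2s^{-1/2}(\ln s)^2$ — giving $\int_{t_0}^{t}e^{\mu(s)}\bigl(b(s)^2+\tfrac{1}{2\sqrt s}\bigr)\,ds\le 2\,\tfrac{(\ln t)^2}{t}\,e^{\mu(t)}\le\tfrac14\,e^{\mu(t)}b(t)$. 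Then, for $x_0$ with $|x_0-\sqrt{t_0}|\le\tfrac12 t_0^{-1/4}\ln t_0$ (so $|Y(t_0)|\le\tfrac12 b(t_0)$), I would run a stopping-time argument with $\tau=\inf\{t>t_0:|Y(t)|\ge b(t)\}$, which is attained strictly before $X$ can become infinite: on $[t_0,\tau]$ one has $|Y|\le b$, so from the decomposition above and (i)--(ii),
\[
|W(t)|\le\tfrac12 b(t_0)+\int_{t_0}^{t}e^{\mu(s)}\bigl(b(s)^2+\tfrac{1}{2\sqrt s}\bigr)\,ds+|M(t)|\le\tfrac34\,e^{\mu(t)}b(t)+|M(t)|.
\]
Hence on the event $G_{t_0}=\bigl\{\sup_{t\ge t_0}e^{-\mu(t)}b(t)^{-1}|M(t)|<\tfrac14\bigr\}$ we get $|Y(t)|<b(t)$ throughout $[t_0,\tau]$, which by continuity is incompatible with $\tau<\infty$; thus $\tau=\infty$ and $|X(t)-\sqrt t|\le t^{-1/4}\ln t$ for all $t\ge t_0$. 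Since $G_{t_0}$ does not depend on $x_0$, this shows $P_{t_0,x_0}(|X(t)-\sqrt t|\le t^{-1/4}\ln t\ \forall t\ge t_0)\ge P(G_{t_0})$ uniformly over the admissible $x_0$.

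It remains to prove $P(G_{t_0})\to1$ as $t_0\to\infty$, and this is the main obstacle. I would cover $[t_0,\infty)=\bigcup_{k\ge0}[t_k,t_{k+1}]$ with $\mu(t_k)=k$, i.e.\ $t_k=(t_0^{3/2}+\tfrac34 k)^{2/3}$, so $\mu(t_{k+1})-\mu(t_k)=1$. Since $e^{\mu(t)}b(t)$ is increasing, failure on the $k$-th block forces $\sup_{[t_k,t_{k+1}]}|M|\ge\tfrac14 e^{\mu(t_k)}b(t_k)$; using $M=\widetilde B\circ\rho$, the reflection principle and the Gaussian tail, together with the Laplace bound $\rho(t)\le\beta^{-1}t^{-1/2}e^{2\mu(t)}$,
\[
P\Bigl(\sup_{[t_k,t_{k+1}]}|M|\ge\tfrac14 e^{\mu(t_k)}b(t_k)\Bigr)\le 4\exp\!\Bigl(-\frac{e^{2\mu(t_k)}b(t_k)^2}{32\,\rho(t_{k+1})}\Bigr)\le 4\exp\!\bigl(-c_\beta\,(\ln t_k)^2\bigr),
\]
where the last step uses $\mu(t_{k+1})-\mu(t_k)=1$ and $\sqrt{t_{k+1}}\,b(t_k)^2\ge(\ln t_k)^2$, with $c_\beta>0$ depending only on $\beta$. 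Finally, since $t_k^{3/2}=t_0^{3/2}+\tfrac34 k$ and $e^{-c_\beta(\ln m)^2}\le m^{-2}$ once $m$ is large, summing over $k$ gives $1-P(G_{t_0})\le C_\beta\int_{t_0^{3/2}}^{\infty}m^{-2}\,dm=C_\beta\,t_0^{-3/2}\to0$, which together with the previous paragraph proves the lemma. The substance is this last step: obtaining a uniform-in-$t$ bound for the colored noise $e^{-\mu(t)}M(t)$ against the slowly decaying barrier $t^{-1/4}\ln t$, calibrating the chaining so that the Gaussian tails $e^{-c_\beta(\ln t_k)^2}$ are not merely summable but sum to a quantity vanishing as $t_0\to\infty$ — this is precisely what the logarithmic factor in the barrier provides, a bare $t^{-1/4}$ barrier being too tight; the delicate short-time behavior near $t=t_0$, where the restoring drift has not yet acted, is absorbed automatically by the uniformity of the estimate in (ii).
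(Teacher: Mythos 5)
Your proof is correct, but it takes a route that differs from the paper's in its technical execution. Both arguments start from the same linearization $Y(t)=X(t)-\sqrt t$ and hinge on the same stochastic convolution $e^{-\frac43 t^{3/2}}\int e^{\frac43 s^{3/2}}dB(s)$, but they diverge from there. The paper handles the nonlinearity by sandwiching $Y$ between two \emph{linear} diffusions $Y_1,Y_2$ driven by the same Brownian motion (one with drift $-2\sqrt t\,Y_1$, one with the extra $-2f(t)^2$ term), solves them explicitly, and controls the common stochastic integral by the Dambis--Dubins--Schwarz time change together with the law of the iterated logarithm, concluding on the event $\{C<\tfrac{\sqrt\beta}{20}\sqrt{\ln t_0}\}$. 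You instead run a first-exit (bootstrap) argument directly on the nonlinear equation via the stopping time $\tau$, absorbing the drift contributions deterministically on $[t_0,\tau]$, and you control the same martingale by a blockwise union bound with Gaussian tails calibrated so that $\mu(t_{k+1})-\mu(t_k)=1$. What your approach buys is a quantitative rate, $1-P(G_{t_0})\le C_\beta t_0^{-3/2}$, and an event $G_{t_0}$ manifestly independent of $x_0$, so uniformity over the admissible initial conditions is immediate; the paper's comparison-plus-LIL route is shorter to write because the linear SDEs are solved in closed form, but it yields a less explicit rate. Your treatment of explosions (no blow-up can occur before $\tau$ since $|Y|\le b$ keeps $X$ near $\sqrt t$) matches the role played in the paper by the coupling event $\mathcal{D}_{t_0,y_0}$.

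Two small inaccuracies should be repaired, though neither threatens the argument. First, the bound $\rho(t)\le \beta^{-1}t^{-1/2}e^{2\mu(t)}$ is not quite true: integrating by parts gives $\int_{t_0}^t e^{2\mu(s)}ds=\frac{e^{2\mu(t)}}{4\sqrt t}-\frac{1}{4\sqrt{t_0}}+\int_{t_0}^t\frac{e^{2\mu(s)}}{8s^{3/2}}ds$, so the claimed inequality fails by a factor $\bigl(1-\tfrac18 t_0^{-3/2}\bigr)^{-1}$; replacing it by $\rho(t)\le 2\beta^{-1}t^{-1/2}e^{2\mu(t)}$ (valid for $t_0\ge 1$) only changes the constant $c_\beta$ in the tail estimate. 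Second, the Laplace-type estimate in your step (ii) needs a one-line verification rather than a bare appeal to integration by parts, since the boundary term alone does not dominate; for instance, check that $F(t)=2\frac{(\ln t)^2}{t}e^{\mu(t)}-\int_{t_0}^t e^{\mu(s)}\bigl(b(s)^2+\tfrac{1}{2\sqrt s}\bigr)ds$ satisfies $F(t_0)>0$ and $F'(t)\ge 0$ for $t\ge t_0$ large, which follows by direct differentiation. With these touch-ups the proof is complete.
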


Lemma \ref{l:Airy_close}   shows that for any solution $X$ of the SDE (\ref{AirySDE}) and any $s\ge 10$ the process $X(t)-\sqrt{t}$ will get close enough to 0 after time $s$. 
Lemma \ref{l:Airy_stays} shows that if  $X(t)-\sqrt{t}$ is close to $0$ for a given large $t=t_0$  then with a high probability it will stay close to $0$ for all $t\ge t_0$. 

\begin{proof}[Proof of Proposition \ref{pr:Airy}]
Let $f(t) = t^{-1/4}\ln  t$.  
By Lemma \ref{l:Airy_close}
for any fixed $s\ge 10$ there is an a.s.~finite stopping time $\sigma_s$ with $\sigma_s\ge s$ so that $|X(\sigma_s)-\sqrt{\sigma_s}|\le \frac12 f(\sigma_s)$ with probability one. 
Lemma \ref{l:Airy_stays}  shows that if the diffusion is close to $\sqrt{t}$ then with a high probability it will stay close forever.

 More precisely, for a given $\eps>0$ one can choose $s\ge 10$ so that 
\[
\inf_{\substack{t_0\ge s\\|x_0-\sqrt{t_0}|\le \frac12 f(t_0)}} P_{t_0,x_0}\left(|X(t)-\sqrt{t}|\leq f(t), \text{ for all }t\ge t_0\right)\ge 1-\eps.
\]
The strong Markov property and Lemma \ref{l:Airy_close}  now imply that the inequality  (\ref{Airybnd}) holds with $T=\sigma_{s}$ with probability at least $1-\eps$.  This  shows that the random time
\[
T_0=\inf \left\{s\ge 10: |X(t)-\sqrt{t}|\le  f(t) \text{ for all $t\ge s$} \right\}
\]
is finite with probability at least $1-\eps$, hence it is a.s.~finite. Therefore  (\ref{Airybnd}) holds with probability one with $T=T_0$.
\end{proof}

%
We can now prove Lemma \ref{l:Step1}.

\begin{proof}[Proof of Lemma \ref{l:Step1}]
By Proposition \ref{pr:AirySL} with probability one the operator $\Airyop^{-1}$ is a Hilbert-Schmidt integral operator with kernel $K_{\textsf{A}}$. 
From (\ref{Airysqr}) and the estimate (\ref{psibnd}) it follows that $\psi_\ddd$ has a largest zero (if it has one), hence if $L$ is larger than that,  the linearity of the equation $\Airyop \psi=0$ implies that
\begin{align}\label{def:phiL}
\psi_L(y)=\psi_\infty(y)-\frac{\psi_\infty(L)}{\psi_{\ddd}(L)}\psi_{\ddd}(y).
\end{align}
Hence the truncated operator $K_{\textsf{A}}^{(L)}$ is well-defined in this case. From the definition of $K_{\textsf{A}}^{(L)}$ we get
\begin{align}\label{HS_Airy_1}
\|K_{\textsf{A}}-K_{\textsf{A}}^{(L)}\|_2^2=\iint_{[0,L]^2} \left|K_{\textsf{A}}(x,y)-K_{\textsf{A}}^{(L)}(x,y)\right|^2 dx \, dy+\iint_{\R_+^2\setminus [0,L]^2}\left|K_{\textsf{A}}(x,y)\right|^2 dx \, dy.
\end{align}
By Proposition \ref{pr:AirySL}, with probability one we have $\|K_{\textsf{A}}\|_2^2<\infty$. This implies that the term  $\iint_{\R_+^2\setminus [0,L]^2}\left|K_{\textsf{A}}(x,y)\right|^2 dx \, dy$ converges to 0 a.s.~as $L\to \infty$. In fact, by the arguments described in the proof of Proposition \ref{pr:AirySL} it follows that $\iint_{\R_+^2\setminus [0,L]^2}\left|K_{\textsf{A}}(x,y)\right|^2 dx \, dy$ can be bounded by $C L^{-1/2}$  with a random  constant $C$.

We now estimate the first term on the right hand side of \eqref{HS_Airy_1}. By symmetry we have
\[
\iint_{[0,L]^2} \left|K_{\textsf{A}}(x,y)-K_{\textsf{A}}^{(L)}(x,y)\right|^2 dx \, dy=2 \int_0^L \int_0^y \left|K_{\textsf{A}}(x,y)-K_{\textsf{A}}^{(L)}(x,y)\right|^2 dx \, dy.
\]
From (\ref{def:phiL}), for $L$ large enough, and  $0\le x\le y\le L$,  we get   
\begin{align*}
K_{\textsf{A}}(x,y)-K_{\textsf{A}}^{(L)}(x,y)&=(\psi_\infty(y)-\psi_L(y)) \psi_{\ddd}(x)
=\psi_{\ddd}(x)\psi_{\ddd}(y) \int_L^\infty \psi_{\ddd}(z)^{-2} dz,
\end{align*}
and
\begin{align}\label{KHS}
\int_0^L \int_0^y \left|K_{\textsf{A}}(x,y)-K_{\textsf{A}}^{(L)}(x,y)\right|^2 dx \, dy&=\frac12 \left(\int_0^L \frac{\psi_{\ddd}(x)^2}{\psi_{\ddd}(L)^2} dx\right)^2 \left(\int_L^\infty \frac{\psi_{\ddd}(L)^2}{\psi_{\ddd}(z)^{2}} dz\right)^2.
\end{align}
From the bounds of  \eqref{ineq:prodpsi} we get that the expression in (\ref{KHS}) is bounded by a random constant times $L^{-2}$, and thus it converges to zero a.s.~as $L\to \infty$.  This concludes the proof of Lemma \ref{l:Step1}. 
\end{proof}

\section{Convergence of the truncated operators}\label{s:Step2proof}

Recall the definition of $\tilde \phi_L, \psi_L$ from  Section \ref{s:mainsteps}. 
Lemma \ref{l:Step2} will follow from the following statement:
\begin{lemma}\label{l:SDE_lim}
For any fixed $L>0$ we have $\tilde \phi_{\ddd}\to \psi_{\ddd}$ and $\tilde \phi_L \to \psi_L$ uniformly on $[0,L]$ with probability one as $a\to \infty$. 
\end{lemma}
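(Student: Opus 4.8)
The plan is to show that the rescaled hard-edge diffusion converges to the soft-edge diffusion by writing both as strong solutions of SDE systems driven by the \emph{same} Brownian motion $B$ (recall the coupling $B_{2a}(x)=a^{-1/3}B(a^{2/3}x)$, so that $B_{2a}$ and $B$ live on one probability space), and then applying a continuity/stability estimate for SDEs on the \emph{fixed} compact interval $[0,L]$. First I would identify the correct rescaled process. Set $\hat\phi(x):=m_{2a}^{1/2}(a^{-2/3}x)\,\phi(a^{-2/3}x)$ where $\phi$ solves $\mathfrak{G}_{\beta,2a}\phi=a^2\phi$; this is exactly the conjugation-plus-stretch appearing in \eqref{phitilde} and \eqref{Besselop}, so $\tilde\phi_{\ddd}$ and $\tilde\phi_L$ are of this form with the Dirichlet and $(1,L)$-boundary initial data respectively. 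Using It\^o's formula on \eqref{sdehard} together with the definition \eqref{ms} of $m_{2a}$, I would derive the SDE satisfied by $(\hat\phi,\hat\phi')$: the conjugation by $m_{2a}^{1/2}$ removes the first-order (drift-in-$\phi'$) term and turns the operator into Schr\"odinger form $-\hat\phi''+V_a\hat\phi$, and the stretch by $a^{2/3}$ rescales the potential. The upshot should be a system of the form $d\hat\phi=\hat\phi'\,dx$, $d\hat\phi'=\hat\phi\big(\tfrac{2}{\sqrt\beta}dB(x)+v_a(x)\,dx\big)$ with a \emph{deterministic, $a$-dependent} potential $v_a(x)$ that converges, uniformly on $[0,L]$ as $a\to\infty$, to $x$ — i.e. the coefficient in the Airy SDE \eqref{AirySDE_1}. (The Brownian term is already exactly $\tfrac{2}{\sqrt\beta}dB$ with no $a$-dependence, which is the whole point of the coupling; one should double-check that the expansion of $a^2 e^{-a^{-2/3}x}$ minus the lower-order contributions from $a^{-2/3}B(a^{2/3}x)$ and its quadratic variation really does produce $x+o(1)$ uniformly on $[0,L]$, using the a.s.~modulus of continuity / law of iterated logarithm bound $|B(a^{2/3}x)|=o(a^{2/3})$.)

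Once both processes solve SDE systems driven by the common $B$ with the same structure and with coefficient functions that are uniformly close on $[0,L]$, the convergence $\tilde\phi_{\ddd}\to\psi_{\ddd}$ uniformly on $[0,L]$ is a standard Gronwall-type argument: subtract the two systems, write the difference of the $\hat\phi'$-components as $\int_0^x(\hat\phi-\psi)\,\tfrac{2}{\sqrt\beta}dB + \int_0^x(\hat\phi\,v_a-\psi\,x)\,dy$, split $\hat\phi v_a-\psi x = (\hat\phi-\psi)v_a + \psi(v_a-x)$, bound the stochastic integral pathwise using the Burkholder–Davis–Gundy inequality or directly via a localized martingale estimate on $[0,L]$, and close the loop with Gronwall on $\sup_{[0,x]}(|\hat\phi-\psi|+|\hat\phi'-\psi'|)$; the error is driven by $\sup_{[0,L]}|v_a(x)-x|\to 0$ together with the boundedness of $\psi,\psi'$ on the fixed compact $[0,L]$. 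Because the same initial data $(0,1)$ is used for the Dirichlet solutions, there is no initial-condition error, giving $\tilde\phi_{\ddd}\to\psi_{\ddd}$ in $C[0,L]$ a.s.

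For $\tilde\phi_L\to\psi_L$ the same SDE stability estimate applies, \emph{but} the initial data differ: $\tilde\phi_L$ and $\psi_L$ are the solutions with $\hat\phi(0)=1$ and a terminal condition $\hat\phi(a^{-2/3}L)=0$, resp.~$\psi_L(L)=0$. I would handle this by the linear-algebra trick already used in \eqref{def:phiL}: write $\tilde\phi_L = \tilde\phi_\infty - \frac{\tilde\phi_\infty(L)}{\tilde\phi_{\ddd}(L)}\tilde\phi_{\ddd}$ in terms of a solution $\hat\phi_{\mathbf 1}$ with initial data $(1,0)$ and $\tilde\phi_{\ddd}$, and similarly $\psi_L=\psi_{\mathbf 1}-\frac{\psi_{\mathbf 1}(L)}{\psi_{\ddd}(L)}\psi_{\ddd}$ — actually it is cleanest to use the two solutions $\hat\phi_{\ddd}$ (initial data $(0,1)$) and $\hat\phi_{\mathbf 1}$ (initial data $(1,0)$) as a basis and write $\tilde\phi_L$ as the linear combination that vanishes at $a^{-2/3}L$. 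Apply the SDE stability estimate to both basis solutions to get $\hat\phi_{\ddd}\to\psi_{\ddd}$ and $\hat\phi_{\mathbf 1}\to\psi_{\mathbf 1}$ uniformly on $[0,L]$; then the coefficients of the linear combination converge provided the relevant denominators (Wronskian-type quantities, e.g. $\psi_{\ddd}(L)$, or better the full $2\times2$ fundamental matrix at the endpoint) stay bounded away from $0$. Here one must invoke the standing assumption that $\psi_{\ddd}(L)\neq 0$ (equivalently $0$ is not an eigenvalue of $\Airyop$ truncated to $[0,L]$, which holds a.s.), so the denominator is bounded below for all large $a$, and note that $a^{-2/3}L\to 0$ is \emph{not} what happens — rather $\tilde\phi_L$ is built from $\phi_{a^{-2/3}L}$ evaluated on the stretched variable so the relevant endpoint in the $\hat\phi$ variable is exactly $L$. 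The main obstacle I anticipate is the first step: carefully extracting, via It\^o, the precise deterministic potential $v_a$ after the conjugation-and-stretch and proving $\sup_{[0,L]}|v_a(x)-x|\to 0$ a.s.; the delicate point is that the Brownian contribution $a^{-1/3}B(a^{2/3}x)$ enters $m_{2a}^{1/2}$ and its It\^o correction, and one must verify these lower-order terms vanish uniformly on $[0,L]$ (they do, by $|B(a^{2/3}x)|\le C_\omega\, a^{1/3}\sqrt{L}\sqrt{\log a}$ a.s.~and the $a^{-2/3}$ prefactors), while the genuinely $O(1)$ contribution of the white-noise term survives intact because of the scaling-covariance built into the coupling.
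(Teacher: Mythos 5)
Your overall strategy---rescale the hard-edge solutions, identify an Airy-type SDE driven by the coupled Brownian motion $B$, prove an SDE stability estimate on the fixed compact $[0,L]$, and recover $\tilde\phi_L$ by linearity from two basis solutions using $\psi_{\ddd}(L)\neq 0$ a.s.---is exactly the paper's strategy (Proposition \ref{prop:sdelim} plus the linearity argument in the proof of Lemma \ref{l:SDE_lim}). However, the central computational claim, as you state it, would fail. If you absorb the full factor $m_{2a}^{1/2}(a^{-2/3}x)$ into the definition of $\hat\phi$, then $\hat\phi$ contains the factor $\exp\bigl(-\tfrac{a^{-1/3}}{\sqrt{\beta}}B(x)\bigr)$ and is therefore not differentiable in $x$: its It\^o differential carries a martingale part $-\tfrac{a^{-1/3}}{\sqrt{\beta}}\hat\phi\,dB$, so the first equation $d\hat\phi=\hat\phi'\,dx$ of your proposed system is not available, and the conjugated ``potential'' cannot be deterministic---the Brownian part of $m_{2a}^{1/2}$ is precisely what produces the white-noise term of $\Airyop$ in the limit, and at finite $a$ it also produces $O(a^{-1/3})$ stochastic corrections. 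Even with the correct smooth normalization the diffusion coefficient is not exactly $\tfrac{2}{\sqrt{\beta}}\hat\phi$: with $\hat u(x)=a^{2/3}e^{-a^{1/3}x}u_a(a^{-2/3}x)$ as in the paper one finds
\begin{align*}
d\hat u'=\hat u\bigl(\tfrac{2}{\sqrt{\beta}}dB+x\,dx\bigr)+F_1(\eps,x,\hat u,\hat u')\,dx+F_2(\eps,x,\hat u,\hat u')\,dB,\qquad \eps=a^{-1/3},
\end{align*}
with $F_2=\tfrac{2}{\sqrt{\beta}}\eps\,\hat u'$, so the claim that ``the Brownian term is already exactly $\tfrac{2}{\sqrt{\beta}}dB$ with no $a$-dependence'' and that only a deterministic $v_a\to x$ needs to be controlled is not correct.

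The gap is localized and repairable, and the repair is the paper's device: use only the deterministic factor $a^{2/3}e^{-a^{1/3}x}$ in the SDE analysis, so that $(\hat u,\hat u')$ is an honest SDE system whose correction coefficients $F_1,F_2$ vanish as $\eps\to 0$ (your Gronwall/BDG stability argument handles such vanishing noise corrections just as well as the stochastic-flow continuity theorem the paper cites), and only afterwards pass to $\tilde\phi_{\ddd}(x)=\hat u(x)\,e^{-\frac{a^{-2/3}}{2}x-\frac{a^{-1/3}}{\sqrt{\beta}}B(x)}$, noting that the leftover factor converges to $1$ uniformly on $[0,L]$ with probability one. With that modification, your treatment of $\tilde\phi_L$ (basis solutions with data $(0,1)$ and $(1,0)$, the linear combination vanishing at the endpoint $L$ in the rescaled variable, denominator controlled since $\psi_{\ddd}(L)\neq 0$ a.s.) coincides with the paper's proof; note that the paper realizes the second basis solution by starting $\phi_*$ from $(\phi_*(0),\phi_*'(0))=(a^{-2/3},a^{1/3})$, which after rescaling gives exactly the Neumann data $(1,0)$.
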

\begin{proof}[Proof of Lemma \ref{l:Step2}]
From (\ref{KAL}), (\ref{KGL}), and Lemma \ref{l:SDE_lim} it follows that if $L>0$ is fixed then $K_{\textsf{G},2a}^{(L)}(x,y)\to K_{\textsf{A}}^{(L)}(x,y)$ uniformly on $[0,L]^2$ with probability one. From this  Lemma \ref{l:Step2} follows.
\end{proof}

The proof of Lemma \ref{l:SDE_lim}  relies on the following proposition:

%

\begin{proposition}\label{prop:sdelim}
%
Let $B'$ be standard white noise on $\R_+$, and $B$ the corresponding Brownian motion. 
Define $\mathfrak{G}_{\beta,2a}$ using $B_{2a}(x)=a^{-1/3} B(a^{2/3} x)$, and $\Airyop$ with $B'$ as in Theorem \ref{thm:hardtosoft}.  %
Let $\eta_0, \eta_1$ be fixed real numbers. Suppose that the processes $u_a, a\ge 1$ satisfy the following conditions:
\begin{enumerate}
\item[(a)] $\mathfrak{G}_{\beta,2a} u_a=a^2 u_a$, 
\item[(b)] $u_a(0), u_a'(0)$ are deterministic, depend continuously on $a$,  and satisfy 
\[
(a^{2/3} u_a(0), u_a'(0)-a u_a(0))\to (\eta_0, \eta_1)
\]
 as $a\to \infty$.
\end{enumerate}

Let $\hat u_a(x)=a^{2/3} e^{-a^{1/3} x}u_a(a^{-2/3} x)$. Then for any $L>0$ we have $(\hat u_a, \hat u_a') \to (\psi, \psi')$ a.s.~uniformly on $[0,L]$ where $\psi, \psi'$ is the unique solution of $\Airyop \psi=0$ with initial conditions $\psi(0)=\eta_0$, $\psi'(0)=\eta_1$. 
\end{proposition}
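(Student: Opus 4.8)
The plan is to reduce the proposition to a convergence-of-solutions statement for a family of linear SDEs, all driven by the single Brownian motion $B$. By Proposition~\ref{pr:hardSL}, assumption (a) together with the deterministic initial data in (b) makes $(u_a,u_a')$ the strong solution of \eqref{sdehard} with $\lambda=a^2$. First I would apply It\^o's formula to the change of variables $\hat u_a(x)=a^{2/3}e^{-a^{1/3}x}u_a(a^{-2/3}x)$, using the scaling identity $B_{2a}(a^{-2/3}x)=a^{-1/3}B(x)$ to rewrite everything in terms of $B$; a direct computation shows that $(\hat u_a,\hat u_a')$ is the strong solution of the linear system
\begin{align*}
d\hat u_a(x)&=\hat u_a'(x)\,dx,\\
d\hat u_a'(x)&=\Bigl(\tfrac{2}{\sqrt\beta}\,\hat u_a(x)+\tfrac{2}{\sqrt\beta}a^{-1/3}\,\hat u_a'(x)\Bigr)dB(x)
+\Bigl(a^{2/3}\bigl(1-e^{-a^{-2/3}x}\bigr)\hat u_a(x)+\tfrac{2}{\beta}a^{-1/3}\hat u_a(x)+\tfrac{2}{\beta}a^{-2/3}\hat u_a'(x)\Bigr)dx,
\end{align*}
with initial condition $(\hat u_a(0),\hat u_a'(0))=\bigl(a^{2/3}u_a(0),\,u_a'(0)-au_a(0)\bigr)$, which converges to $(\eta_0,\eta_1)$ by (b).

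The second step is to note that every coefficient of this system converges, uniformly on $[0,L]$ as $a\to\infty$, to the corresponding coefficient of the Airy system \eqref{AirySDE_1}: indeed $a^{2/3}\bigl(1-e^{-a^{-2/3}x}\bigr)=x-\tfrac12 a^{-2/3}x^2+O(a^{-4/3})\to x$ uniformly on $[0,L]$, while every remaining correction term carries a negative power of $a$ and hence vanishes uniformly on $[0,L]$. Thus we are precisely in the setting of a linear SDE whose deterministic coefficients and whose initial data converge to those of the equation $\Airyop\psi=0$ with $\psi(0)=\eta_0$, $\psi'(0)=\eta_1$; recall that $(\psi,\psi')$ exists for all time, being itself the solution of a linear SDE.

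The third step, which I expect to be the main obstacle, is the stability estimate that turns this coefficient convergence into a.s.\ uniform convergence of the solutions on $[0,L]$. Writing $D_a=(\hat u_a-\psi,\hat u_a'-\psi')$, the difference solves a linear SDE of the same type whose inhomogeneous forcing comes from inserting $(\psi,\psi')$ --- an a.s.\ continuous, hence a.s.\ bounded process on $[0,L]$ --- into the (uniformly small) difference of the coefficients; applying It\^o's formula to $|D_a|^2$, the Burkholder--Davis--Gundy inequality for the running maximum of the martingale part, and Gronwall's lemma for the drift, one gets $\mathbb E\bigl[\sup_{[0,L]}|D_a|^2\bigr]\le C(L)\,\varepsilon_a^{\,2}$ with $\varepsilon_a\to 0$, hence uniform convergence on $[0,L]$ in probability. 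Upgrading to almost sure convergence along the continuous parameter $a$ requires an extra argument: one can first peel off the initial-data error by comparing $\hat u_a$ with the solution $\psi^{(a)}$ of \eqref{AirySDE_1} started from $(\hat u_a(0),\hat u_a'(0))$ --- by linearity $\psi^{(a)}-\psi$ is the Airy fundamental matrix applied to $(\hat u_a(0)-\eta_0,\hat u_a'(0)-\eta_1)$, so $\sup_{[0,L]}|\psi^{(a)}-\psi|\to 0$ a.s.\ --- after which the remaining comparison has forcing of order $a^{-1/3}$ times an a.s.\ bounded process; a localization on the norm of the fundamental matrix together with Borel--Cantelli along a geometric subsequence and a continuity-in-$a$ estimate then closes the argument, or alternatively one may invoke the pathwise continuity of the It\^o solution map for linear equations. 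In all of this, the linearity of the system keeps every constant finite and rules out blow-up of $(\hat u_a,\hat u_a')$ on $[0,L]$.
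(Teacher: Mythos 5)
Your first two steps coincide with the paper's proof: the It\^o computation transforming (\ref{sdehard}) into the system (\ref{uhat}) for $(\hat u_a,\hat u_a')$, and the observation that the correction terms carry powers of $\varepsilon=a^{-1/3}$ and vanish uniformly on $[0,L]$, are exactly what is done there. The divergence is in the last step, and that is where your argument has a genuine gap. The paper runs no moment estimate at all: it notes that the coefficients of (\ref{uhat}) are globally Lipschitz in $(p,q)$ for $x\in[0,L]$, continuous in $\varepsilon\in[0,1]$, with deterministic initial data continuous in $\varepsilon$, and invokes the stochastic-flow theorem (Theorem 37, Chapter 7 of \cite{Protter}) to obtain a family of strong solutions that is a.s.\ jointly continuous in $(\varepsilon,x)$; the a.s.\ uniform convergence on $[0,L]$ as $\varepsilon\to0$, i.e.\ along the \emph{continuous} parameter $a\to\infty$, is then immediate.

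Your It\^o--BDG--Gronwall estimate is fine and yields $\mathbb{E}\bigl[\sup_{[0,L]}|D_a|^2\bigr]\le C(L)\,\varepsilon_a^2$, hence uniform convergence in probability, but the proposition asserts almost sure convergence as $a\to\infty$ through all real values, and your upgrade is only gestured at. Borel--Cantelli along a geometric subsequence $a_n$ is unproblematic, but the ``continuity-in-$a$ estimate'' needed to fill in $a\in[a_n,a_{n+1}]$ is precisely the hard content: $\hat u_a'$ is defined through a stochastic integral, so controlling $\sup_{a\in[a_n,a_{n+1}]}\sup_{[0,L]}|\hat u_a-\hat u_{a_n}|$ pathwise does not follow from your fixed-$a$ bounds and requires a Kolmogorov-type continuity argument in the parameter --- which is essentially re-proving the flow theorem. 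Your fallback, ``pathwise continuity of the It\^o solution map for linear equations,'' is not a valid principle as literally stated (the It\^o map is not a pathwise-continuous functional of the driving path); what is true, and what the paper uses, is that for a fixed driving Brownian motion and coefficients Lipschitz in the state and continuous in the parameter, the solutions admit a modification jointly continuous in (parameter, time). If you replace your sketch by an appeal to that result, your proof closes but collapses onto the paper's; as written, the a.s.\ statement over the continuum of $a$ values is not established.
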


\begin{proof}
To ease notation, we drop the dependence on $a$ in  $u_a, \hat u_a$.  By Proposition \ref{pr:hardSL} the process $(u(t), u'(t))$ satisfies the SDE 
\begin{align}\label{sdehard_}
du(x)=u'(x)dx, \quad du'(x)=\tfrac{2}{\sqrt{\beta}} u'(x) dB_{2a}(x)+\left((2a+\tfrac2{\beta})u'(x)-a^2 e^{-x} u(x)\right) dx.
\end{align}
The initial conditions for $\hat u$ are 
\[
\hat u(0)=a^{2/3} u(0), \qquad \hat u'(0)=u'(0)-a u(0),
\]
hence by the conditions of the proposition we see that $(\hat u(0), \hat u'(0))\to (\eta_0, \eta_1)$.  Note that $\hat{u}'(x)=-a^{1/3}\hat{u}(x)+e^{-a^{1/3}x}u'(a^{-2/3}x)$, by It\^o's formula and \eqref{sdehard_} we have that 
\begin{align*}
d \hat u'&=
\tfrac{2}{\sqrt{\beta}}(a^{-1/3}\hat u'+\hat u)dB(x) + \left(a^{2/3}(1-e^{-a^{-2/3}x})\hat u+ \tfrac{2}{\beta}a^{-1/3}\hat u+ \tfrac{2}{\beta}a^{-2/3}\hat u'\right)dx.
\end{align*}
This means that $\hat u, \hat u'$ satisfies
\begin{align}\label{uhat}
d\hat u(x)&=\hat u'(x) dx,\\
d\hat u'(x)&=\hat u(x)(\tfrac{2}{\sqrt{\beta}}d B(x)+x dx)+F_1(\eps, x, \hat u(x),\hat u'(x)) dx+F_2(\eps, x, \hat u(x),\hat u'(x)) dB,\notag
\end{align}
where  $\eps=a^{-1/3}$ and
\begin{align}
F_1(\eps, x, p, q)=(\eps^{-2}(1-e^{-\eps^2x})-x) p+\tfrac2{\beta} \eps p+\tfrac2{\beta} \eps^2 q, \qquad
F_2(\eps, x, p, q)=\tfrac{2}{\sqrt{\beta}}\eps q.
%
%
\end{align}
With a bit of abuse of notation we will use $\hat u_\eps, \hat u'_\eps$ to denote the dependence on $\eps\in (0,1]$.

The functions $F_1, F_2$ can be continuously extended to $\eps=0$  by setting $F_i(0,x,p,q)=0$. Define $(\hat u_0, \hat u_0')$ to be the solution of (\ref{uhat}) with $\eps=0$ and initial conditions $(\eta_0, \eta_1)$.  This  is exactly  the solution $(\psi, \psi')$ of  $\Airyop \psi=0$ and $\psi(0)=\eta_0$, $\psi'(0)=\eta_1$.

Note that for $x\in [0,L]$, $\eps\in [0,1]$ the functions  $F_1$, $F_2$ are globally Lipschitz in $p$ and $q$, and $(\hat u_\eps, \hat u'_\eps), \eps\in [0,1]$ gives a stochastic flow where the deterministic initial conditions are continuous for $\eps\in [0,1]$.  
Standard theory of stochastic flows  (see e.g.~Theorem 37 in Chapter 7 of \cite{Protter}) shows that there is a unique one-parameter family of strong solutions for the SDE (\ref{uhat}) for $\eps\in [0,1]$ which is a.s.~uniformly continuous in $\eps$ for $x\in [0,L]$. But this implies that $(\hat u_\eps, \hat u_\eps') \to (\hat u_0, \hat u_0')$ a.s.~uniformly on $[0,L]$ as $\eps\to 0$, proving the statement of the lemma.
 \end{proof}

\begin{proof}[Proof of Lemma \ref{l:SDE_lim}]
Consider $u_a(x)=\phi_{\ddd}(x)$. These functions satisfy the conditions of Proposition \ref{prop:sdelim} with $\eta_0=0, \eta_1=1$. Thus $\hat u(x)=a^{2/3} e^{-a^{1/3} x} \phi_{\ddd}(a^{-2/3} x)$ converges to $\psi_{\ddd}$ a.s.~uniformly on $[0,L]$ as $a\to \infty$. Then the same is true for 
\[
\tilde \phi_{\ddd}(x)=a^{2/3} m_{2a}^{1/2}(a^{-2/3} x)\phi_{\ddd}(a^{-2/3} x)=\hat u(x) e^{-\frac{a^{-2/3}}{2} x-\frac{a^{-1/3}}{\sqrt{\beta}} B(x)}.
\]
To show the convergence of $\tilde \phi_L$ we first consider $\phi_*$, the solution  of  $\mathfrak{G}_{\beta,2a} \phi_*=a^2 \phi_*$ with initial conditions $\phi_*(0)=a^{-2/3} $, $\phi_*'(0)=a^{1/3}$. Then $v_a(x)=\phi_*(x)$ satisfies  the conditions of Proposition \ref{prop:sdelim} with $\eta_0=1, \eta_1=0$. This means that $\hat v(x)=a^{2/3} e^{-a^{1/3} x}\phi_*(a^{-2/3} x)$ converges uniformly to $\psi_*(x)$ where $\Airyop \psi_*=0$ and $\psi_*(0)=1$, $\psi_*'(0)=0$ (i.e.~the solution with Neumann initial conditions).

%
%

By linearity $\psi_L(x)=\psi_*(x)-\frac{\psi_*(L)}{\psi_{\ddd}(L)}\psi_{\ddd}(x)$.  Note that   $\psi_{\ddd}(L)\neq 0$ with probability one for a fixed $L$, so $\psi_L$ is a.s.~well-defined. This also implies that for a fixed $L$ the random variable $\tilde \phi_{\ddd}(L)$ is not zero if $a$ is larger than a random constant, and in this case $\tilde \phi_L$ is also well-defined.

The function $\psi_L$ satisfies $\Airyop \psi_L=0$ with 
 $\psi_L(0)=1$, $\psi_L(L)=0$. By our previous arguments we have $ \hat v(x)-\frac{\hat v(L)}{\hat u (L)}\hat u(x)\to \psi_L(x)$ a.s.~uniformly for $x\in [0,L]$, as $a\to \infty$. We have
 \begin{align*}
 \hat v(x)-\frac{\hat v(L)}{\hat u (L)}\hat u(x)&=a^{2/3} e^{-a^{1/3} x}\phi_*(a^{-2/3} x)-\frac{a^{2/3} e^{-a^{1/3} L}\phi_*(a^{-2/3} L)}{a^{2/3} e^{-a^{1/3} L}\phi_\ddd(a^{-2/3} L)}a^{2/3} e^{-a^{1/3} x}\phi_\ddd(a^{-2/3} x)\\
 &= a^{2/3} e^{-a^{1/3} x}\left(\phi_*(a^{-2/3} x)-\frac{\phi_*(a^{-2/3} L)}{\phi_\ddd(a^{-2/3} L)}\phi_\ddd(a^{-2/3} x)\right),
 \end{align*}
 and we can check (by plugging in $x=0$ and $x=L$) that 
 \[
  \hat v(x)-\frac{\hat v(L)}{\hat u (L)}\hat u(x)=e^{-a^{-1/3} x} \phi_{a^{-2/3} L}(a^{-2/3} x)=\tilde \phi_L(x)  e^{\frac{a^{-2/3}}{2} x+\frac{a^{-1/3}}{\sqrt{\beta}} B(x)}.
\] 
 But this now implies that $\tilde \phi_L \to \psi_L$ uniformly on $[0,L]$ with probability one, completing the proof. 
\end{proof}

\section{Truncation of the Bessel operator}\label{s:Step3proof}

In order to control $ \|K_{\textsf{G},2a}-K_{\textsf{G},2a}^{(L)}\|_2^2$ and prove Lemma \ref{l:Step3}, we need to understand the asymptotic behavior of $\phi_{\ddd}(t)=\phi_{\ddd}^{(2a)}(t)$ uniformly in $a$. As before, we turn to the Riccati transform $p=p^{(2a)}(t)=\frac{\phi_{\ddd}'(t)}{\phi_{\ddd}(t)}$.
 It\^o's formula together with (\ref{sdehard}) implies that $p(t)$ satisfies the diffusion
\begin{align}
dp(t)=\frac{2}{\sqrt{\beta}}p(t) dB_{2a}(t)+\left((2a+\tfrac{2}{\beta})p(t)-p(t)^2-a^2 e^{-t}\right)dt
\end{align}
with initial condition $p(0)=\infty$. The diffusion could reach $-\infty$ at a finite time, in which case it restarts at $+\infty$ instantaneously. 

Our next proposition describes the behavior of $p$ in the region $[a^{-2/3} L, \infty)$ uniformly in $a$.  In words
the asymptotic behavior of $p$ can be explained as follows: on a microscopic $a^{-2/3}$ time scale the scaled version of $p$ (that is $a^{-2/3}(p(a^{-2/3}t) - a)$) will mimic $\tfrac{\psi_\ddd'(t)}{\psi_\ddd(t)}$ by Proposition \ref{prop:sdelim}, and this behavior can be extended up to a small macroscopic time of order $a^{2/3}$. For large macroscopic times the diffusion $p(t)/a$ will behave like a time-stationary diffusion supported on $\R_+$, which yields logarithmic bounds on $\ln  p(t)-\ln  a$.

For the rest of this section we set $t_0 :=1/8$. Recall that for $a>0$ we have $B_{2a}(t)=a^{-1/3} B(a^{2/3} t)$.

\begin{proposition}[Behavior of the Bessel diffusion]\label{prop:phibnd}
Let $d_1, d_2>0$. 
For a given  $L>0$ and $a_1\ge 1$,  define  $\mathcal{C}_{L,a_1}$ to be the event where the following inequalities hold for all $a\geq a_1$:
\begin{align}
 p^{(2a)}(t)\ge a(1+d_1 \sqrt{t}), \qquad\qquad\qquad &\text{for all }t\in [a^{-2/3}L, t_0], \label{eq:q_bnd1}\\[3pt]
\exp(-a^{-1/6}\ln  t) \le p^{(2a)}(t)/a \le \exp(d_2+a^{-1/6}\ln  t), \qquad &\text{for all }t\ge t_0,\label{eq:q_bnd2}\\[3pt]
\tfrac{2}{\sqrt{\beta}}|B_{2a}(t)-B_{2a}(s)|\leq a^{1/2}(t-s)+a^{-1/6}\ln (a^{2/3}s), \qquad &\text{for all }t\ge s\ge a^{-2/3} L. \label{eq:B2abound}
\end{align}
Then we can choose deterministic constants  $d_1, d_2>0$ so that 
\begin{align} \label{eq:Cla}
\lim\limits_{L\to \infty}\lim\limits_{a_1\to \infty}P\big(\mathcal{C}_{L,a_1}\big)=1\,.
\end{align}
\end{proposition}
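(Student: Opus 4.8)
The plan is to analyze the Riccati diffusion $p^{(2a)}$ in two separate regimes, corresponding to the two inequalities \eqref{eq:q_bnd1} and \eqref{eq:q_bnd2}, using the a.s.~control on the driving Brownian motion coming from \eqref{eq:B2abound} (which is itself a routine consequence of the law of the iterated logarithm applied to $B$, after unscaling $B_{2a}(t)=a^{-1/3}B(a^{2/3}t)$, so that on $[a^{-2/3}L,\infty)$ the increments of $\tfrac2{\sqrt\beta}B_{2a}$ are dominated by $a^{1/2}(t-s)+a^{-1/6}\ln(a^{2/3}s)$ simultaneously for all large $a$ with probability tending to $1$ as $L\to\infty$, $a_1\to\infty$). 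Throughout I would work on the event that \eqref{eq:B2abound} holds and treat $\tfrac2{\sqrt\beta}B_{2a}$ as a deterministic perturbation of that size.

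\textbf{Microscopic regime $[a^{-2/3}L,t_0]$.} Here I would change variables to the scaled process $\bar p(t)=a^{-2/3}(p^{(2a)}(a^{-2/3}t)-a)$, which by It\^o's formula and \eqref{sdehard} satisfies, up to $\eps=a^{-1/3}$ corrections, the Airy Riccati SDE $d\bar p=(t-\bar p^2)\,dt+\tfrac2{\sqrt\beta}\,dB+(\text{lower order})$, exactly in the spirit of Proposition \ref{prop:sdelim}; started from $\bar p=+\infty$ (i.e.~$p=\infty$) it is comparable to $\psi_\ddd'/\psi_\ddd$, which after its last blow-up is positive and grows like $\sqrt t$. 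Translating back, $p^{(2a)}(t)\ge a(1+d_1\sqrt t)$ on the macroscopic window $[a^{-2/3}L,t_0]$ for a suitable small $d_1>0$ once $L$ and $a_1$ are large: the lower bound $L$ pushes past the (finitely many, uniformly bounded) blow-ups of the limiting Airy diffusion, and the smallness of $t_0$ keeps the $e^{-t}$ potential term $\approx 1$ so that the comparison with the Airy parabola is valid. I would make this quantitative by a stochastic comparison: once $p/a-1\ge c\sqrt t$ the drift $(2a+\tfrac2\beta)p-p^2-a^2e^{-t}$ is strongly restoring from below provided $c$ is small relative to the implied constants, and a Gronwall/barrier argument using \eqref{eq:B2abound} keeps the process above the curve $a(1+d_1\sqrt t)$ on all of $[a^{-2/3}L,t_0]$.

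\textbf{Macroscopic regime $[t_0,\infty)$.} Here I would pass to $\ell(t)=\ln p^{(2a)}(t)-\ln a$. Feeding the SDE for $p$ through It\^o gives $d\ell=\tfrac2{\sqrt\beta}dB_{2a}+\big((2a+\tfrac2\beta)-p-a^2e^{-t}/p-\tfrac2\beta p^2/p^2\cdot(\dots)\big)dt$; the point is that $p/a$ behaves like a positive recurrent ("time-stationary") diffusion whose log has bounded drift fluctuations, so that $|\ell(t)|$ is controlled by the accumulated noise $\tfrac2{\sqrt\beta}\int B_{2a}$-type terms, which by \eqref{eq:B2abound} are $O(a^{-1/6}\ln(a^{2/3}t))$ plus a constant $d_2$ absorbing the behavior near $t_0$ (where we enter from the microscopic regime with $p^{(2a)}(t_0)\asymp a$, a bound inherited from \eqref{eq:q_bnd1}). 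I would set this up as an upper barrier at $\exp(d_2+a^{-1/6}\ln t)$ and a lower barrier at $\exp(-a^{-1/6}\ln t)$ for $\ell$, checking that the drift of $\ell$ points inward at each barrier (for the upper barrier, large $p$ forces the $-p$ term to dominate; for the lower barrier, $p$ small forces the $-a^2e^{-t}/p$ term to dominate for $t$ of constant order, and for large $t$ the $+2a$ term dominates), and using \eqref{eq:B2abound} to bound the martingale excursions across these barriers. The entry condition at $t_0$ needs a separate small argument bounding $P(p^{(2a)}(t_0)/a\notin[\text{const},\text{const}])$, which follows from the microscopic analysis.

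\textbf{Main obstacle.} The delicate point is the macroscopic regime for $t$ near $t_0$ and moderately large: there $e^{-t}$ is neither negligible nor $\approx 1$, the diffusion $p/a$ is genuinely two-sided, and one must argue it cannot make an atypically large excursion (up or down) that would violate the $a^{-1/6}\ln t$ bands before the noise has had time to be controlled by \eqref{eq:B2abound}. I would handle this by a careful choice of the constant $d_2$ — large enough to absorb the worst-case excursion of the stationary-type diffusion over an $O(1)$ time interval, uniformly in $a$ — together with a coupling/comparison to an autonomous ($a$-independent after rescaling) reference diffusion with explicit invariant density, whose tail bounds give the band-crossing probabilities. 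Making all of these comparisons hold \emph{simultaneously for all $a\ge a_1$} on a single event is the reason the statement is phrased as a double limit $\lim_{L\to\infty}\lim_{a_1\to\infty}$, and the union bound over the (effectively discretized) range of $a$ is what forces the $\ln$ corrections and the order of limits; this bookkeeping, rather than any single estimate, is the real work.
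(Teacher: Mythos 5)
Your two--regime skeleton (an Airy--Riccati comparison on $[a^{-2/3}L,t_0]$, a logarithmic variable compared with a stationary-type diffusion for $t\ge t_0$, and a Brownian modulus bound giving \eqref{eq:B2abound}) is exactly the paper's architecture, but the mechanism you propose for making everything hold \emph{simultaneously for all $a\ge a_1$} — a union bound over a discretized range of $a$ — does not work, and this is where the actual content of the proposition lies. For a fixed $L$, the probability that the band comparison fails for a \emph{single} value of $a$ is a quantity of order $\eps(L)>0$ that does not decay as $a\to\infty$ (the rescaled Riccati process converges to an $a$-independent Airy diffusion, so each failure probability stabilizes), and $a$ runs over an unbounded set; the union bound therefore diverges, and no choice of logarithmic corrections rescues it. The paper's solution is an almost-sure coupling argument: writing $q=\ln p^{(2a)}-\ln a$, its drift is sandwiched between $a(\tfrac12 t-2q^2)$ and $a(t-q^2)$ while $0\le q\le\tfrac12$, $t\le t_0$, and after the rescaling $y(t)=\mathrm{const}\cdot a^{1/3}q(a^{-2/3}t)$ the two comparison processes solve \emph{$a$-independent} Airy Riccati SDEs driven by the original Brownian motion $B$ (Proposition \ref{Airy approx}); the only residual $a$-dependence is in the entry value $p^{(2a)}(a^{-2/3}L)$, which is controlled uniformly in $a\ge a_1$ by the a.s.\ uniform convergence of Proposition \ref{prop:sdelim} combined with Proposition \ref{pr:Airy} (Lemma \ref{lem:controltimeL}), and \eqref{eq:B2abound} holds for all $a$ at once through one random constant (Lemma \ref{BM fluctuation}). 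The double limit \eqref{eq:Cla} then comes from monotone events and a.s.\ statements, not from summing error probabilities; relatedly, your plan to read the $\sqrt t$ growth off Proposition \ref{prop:sdelim} directly is not justified as written, since that proposition only gives convergence on compact scaled windows, not up to scaled time $a^{2/3}t_0$.

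Two smaller points. In the macroscopic regime your lower-barrier reasoning is backwards: when $p$ is small the term $-a^2e^{-t}/p$ makes the drift strongly \emph{negative}, i.e.\ it pushes $p$ further down (this is precisely how Riccati explosions occur), so its ``dominance'' is outward, not inward; the lower band $p\ge a e^{-a^{-1/6}\ln t}$ sits near $p\approx a$, where the inward push comes from $2a-p-a^2e^{-t}/p\ge a(1-e^{-t_0})>0$. Moreover, a bare ``drift points inward at the barrier'' argument is delicate with \eqref{eq:B2abound}, whose increment bound contains the non-vanishing term $a^{-1/6}\ln(a^{2/3}s)$ even over arbitrarily short intervals, so one needs a buffer/excursion refinement; the paper sidesteps both issues by sandwiching $q$ between two diffusions with drifts $a(c_i-e^{q})$, $c_1=2-e^{-t_0}$, $c_2=2$, whose exponentials solve linear SDEs explicitly, and the $\pm a^{-1/6}\ln t$ bands then drop out of the Brownian estimate (Proposition \ref{Log bound}). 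Finally, \eqref{eq:B2abound} is not a consequence of the law of the iterated logarithm alone; one needs a uniform-in-$(s,h)$ increment estimate such as Lemma \ref{BM fluctuation}. These last items are repairable, but the uniformity-in-$a$ mechanism is a genuine gap.
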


\begin{figure}
\centering
\includegraphics[width= 12cm]{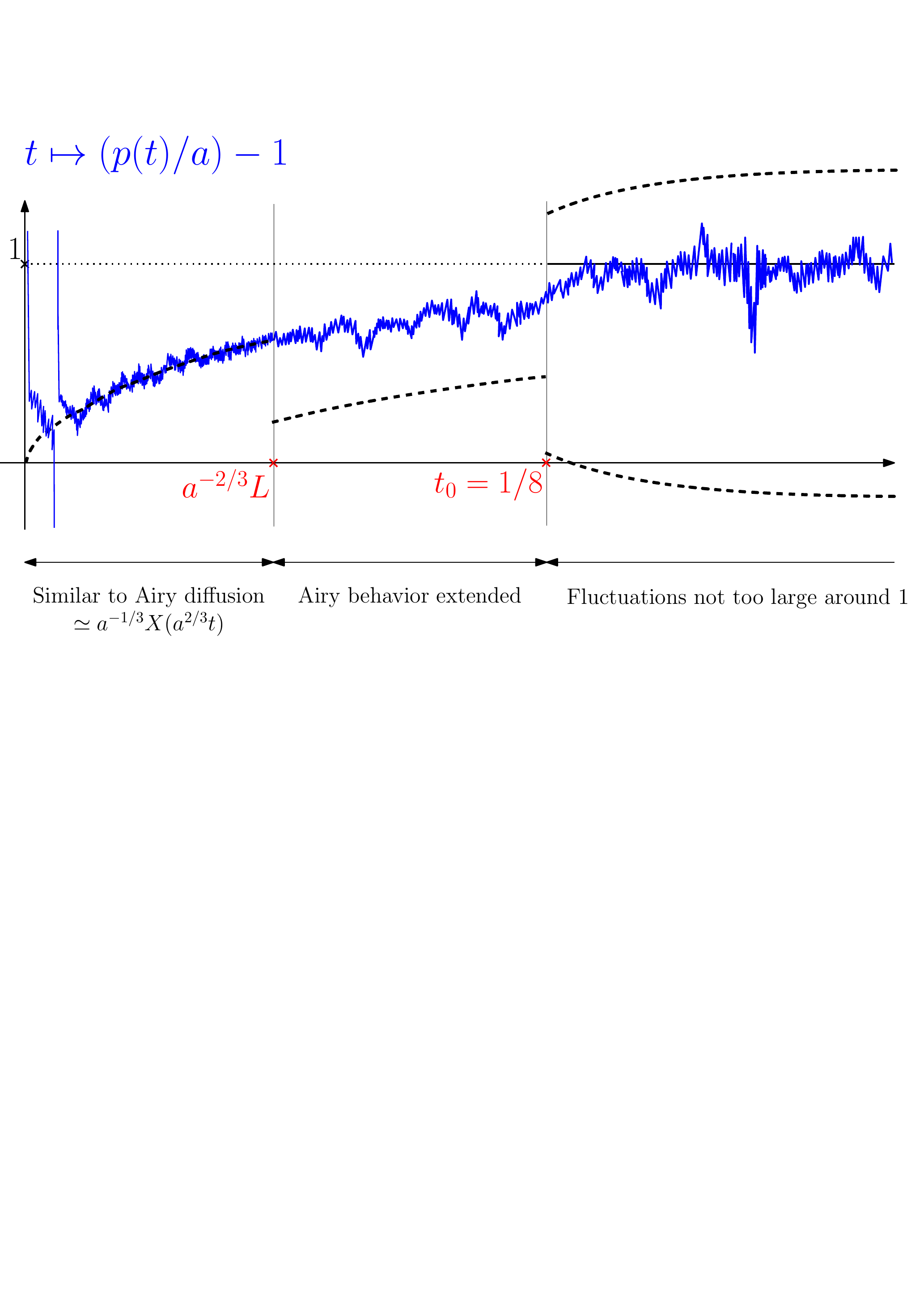}
\caption{Schematic representation of the behavior of the diffusion $t \mapsto (p(t)/a) -1$ 
}\label{Besseldiffusion}
\end{figure}

See Figure \ref{Besseldiffusion} for an schematic illustration of the behavior of the Bessel diffusion. The proof of Proposition \ref{prop:phibnd}  is postponed to Section \ref{s:phi tail}.
Using this proposition we can control the products $\tilde \phi_\ddd(x) \; \tilde \phi_\infty(x)$ and $ \tph_\ddd(y)^{-2}\tph_\ddd(x)^2$ when $y \geq x \geq L$. This will be key to estimate  $ \|K_{\textsf{G},2a}-K_{\textsf{G},2a}^{(L)}\|_2^2$. 

In the rest of this section, we assume $L \ge 10$ and set $c_L=(10L)^{3/2}\vee4(1-e^{-t_0})^{-2}$.

\begin{proposition}\label{prop:Ibnd}
Define
	\begin{align}
	\mathcal{I}(s,t) := -2 \int_{s}^{t}(p(z)-a )dz+\frac{2}{\sqrt{\beta}}(B_{2a}(t)-B_{2a}(s))\,.\label{defmathcalI}
	\end{align}
There are absolute constants $c, c'$ so that for all $a_1\geq c_L$, the following inequalities hold on the event $\mathcal{C}_{L, a_1}$ (as defined in Proposition \ref{prop:phibnd}): for all $a\geq a_1$,
	\begin{align}\label{ineq:mathcalI}
	\mathcal{I}(s,t)\le \begin{cases}
	-c\, a\sqrt{s}(t-s)+c' &\qquad t\geq s, \quad t_0\geq s \geq a^{-2/3}L,\\
	-c \,a(t-s)+5a^{-1/6}\ln  s+c' & \qquad t\geq s\geq t_0.
	\end{cases}
	\end{align}
\end{proposition}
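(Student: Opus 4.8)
The plan is to estimate the integral $\int_s^t (p(z)-a)\,dz$ from below using the pointwise bounds on $p^{(2a)}$ provided by Proposition~\ref{prop:phibnd}, and to control the Brownian increment $\tfrac{2}{\sqrt\beta}(B_{2a}(t)-B_{2a}(s))$ using \eqref{eq:B2abound}. Since $\mathcal I(s,t) = -2\int_s^t(p(z)-a)\,dz + \tfrac2{\sqrt\beta}(B_{2a}(t)-B_{2a}(s))$, on $\mathcal C_{L,a_1}$ we get from \eqref{eq:B2abound} that $\tfrac2{\sqrt\beta}(B_{2a}(t)-B_{2a}(s)) \le a^{1/2}(t-s) + a^{-1/6}\ln(a^{2/3}s)$, so the whole task reduces to showing that $-2\int_s^t(p(z)-a)\,dz$ dominates this error with room to spare, i.e.\ produces a genuinely negative linear-in-$(t-s)$ main term.

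First I would treat the regime $a^{-2/3}L \le s \le t \le t_0$. Here \eqref{eq:q_bnd1} gives $p(z) - a \ge a\,d_1\sqrt z \ge a\,d_1\sqrt s$ for all $z\in[s,t]$, hence $-2\int_s^t(p(z)-a)\,dz \le -2a\,d_1\sqrt s\,(t-s)$. Combining with the Brownian bound, $\mathcal I(s,t) \le -2a\,d_1\sqrt s\,(t-s) + a^{1/2}(t-s) + a^{-1/6}\ln(a^{2/3}s)$. Since $s\ge a^{-2/3}L$ we have $a^{1/2}(t-s) \le a^{1/2} \cdot \frac{a^{1/2}\sqrt s}{\sqrt L}(t-s)$... more cleanly: $a\sqrt s \ge a\sqrt{a^{-2/3}L} = a^{2/3}\sqrt L$, which for $a\ge a_1\ge c_L$ large dominates $a^{1/2}$, so $a^{1/2}(t-s) \le \tfrac12 \cdot 2a d_1 \sqrt s (t-s)$ once $\sqrt L \ge a^{-1/6}/d_1$, absorbing half the main term. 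The residual term $a^{-1/6}\ln(a^{2/3}s)$ with $s\le t_0$ is bounded by an absolute constant $c'$ (since $a^{-1/6}\ln(a^{2/3}t_0)\to 0$); here the choice $c_L = (10L)^{3/2}\vee 4(1-e^{-t_0})^{-2}$ ensures $a_1$ is large enough. This yields $\mathcal I(s,t)\le -c\,a\sqrt s\,(t-s) + c'$ with $c = d_1$.

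For the regime $t_0 \le s \le t$, I would use the lower bound in \eqref{eq:q_bnd2}: $p(z)/a \ge \exp(-a^{-1/6}\ln z)$. For $a$ large this is $\ge 1 - a^{-1/6}|\ln z|$ only when $z$ is not too large, so instead I split: on the part where $p(z) \ge a$ the integrand $-(p(z)-a)$ is $\le 0$ and only helps; the issue is where $p(z) < a$, i.e.\ $z < 1$ roughly (where $a^{-1/6}\ln z < 0$). There $p(z) - a \ge a(e^{-a^{-1/6}\ln z} - 1) \ge -a\cdot a^{-1/6}|\ln z|$ for $z\in[t_0,1]$, so $-2\int_{t_0}^{1}(p(z)-a)\,dz$ contributes at most $2a^{5/6}\int_{t_0}^1|\ln z|\,dz = O(a^{5/6})$, which is dominated by... hmm, this needs the main negative term. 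The main negativity must come from the bulk where $p(z)$ exceeds $a$: I'd rather argue that $p(z)\ge a(1+d_1\sqrt{t_0})$ at $z=t_0$ by \eqref{eq:q_bnd1} and that the upper bound \eqref{eq:q_bnd2} prevents $p$ from collapsing back to $a$ — more carefully, one shows $p(z) \ge a(1+\delta)$ for all $z\ge t_0$ for some fixed $\delta>0$ on the event, perhaps by strengthening \eqref{prop:phibnd} or by a direct comparison argument at the diffusion level. Then $-2\int_s^t(p(z)-a)\,dz \le -2a\delta(t-s)$, and adding $a^{1/2}(t-s) + a^{-1/6}\ln(a^{2/3}s)$: for $a$ large $a^{1/2}(t-s)\le a\delta(t-s)$, leaving $-a\delta(t-s) + a^{-1/6}\ln(a^{2/3}s)$. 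Writing $\ln(a^{2/3}s) = \tfrac23\ln a + \ln s$ and noting $a^{-1/6}\cdot\tfrac23\ln a \le c'$ absolutely, we obtain $\mathcal I(s,t)\le -c\,a(t-s) + a^{-1/6}\ln s + c'$, and bounding $a^{-1/6}\ln s \le 5a^{-1/6}\ln s$ trivially (the factor $5$ is slack) gives the claimed form.

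The main obstacle is the second case: converting the multiplicative bound \eqref{eq:q_bnd2} (which only says $p(z)/a$ is between $e^{-a^{-1/6}\ln z}$ and $e^{d_2+a^{-1/6}\ln z}$, a window straddling $1$) into a uniform additive \emph{lower} bound $p(z) \ge a(1+\delta)$ that is positive. The lower bound in \eqref{eq:q_bnd2} alone is not enough since $e^{-a^{-1/6}\ln z} \le 1$ for $z\ge 1$. I expect one genuinely needs the initial condition \eqref{eq:q_bnd1} at $t=t_0$ together with a recurrence/stability property of the stationary-type diffusion $p(t)/a$ — i.e.\ that the diffusion, once it has entered the region $\{p/a \ge 1+d_1\sqrt{t_0}\}$, spends enough time there that the time-integral $\int_s^t(p(z)-a)dz$ grows at least linearly; this is where the logarithmic-fluctuation structure built into $\mathcal C_{L,a_1}$ and the constants $d_1, d_2$ must be used quantitatively. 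Once that uniform positive lower bound is secured, the rest is the bookkeeping sketched above, with the constants $c,c'$ absolute because all $L$- and $a_1$-dependence has been pushed into the hypothesis $a_1\ge c_L$.
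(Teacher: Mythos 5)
Your treatment of the first regime ($a^{-2/3}L\le s\le t\le t_0$) is correct and matches the paper: \eqref{eq:q_bnd1} gives $p(z)-a\ge a\,d_1\sqrt z\ge a\,d_1\sqrt s$, and the Brownian error \eqref{eq:B2abound} is absorbed because $a\sqrt s\ge a^{2/3}\sqrt L$ dominates $a^{1/2}$ and $a^{-1/6}\ln(a^{2/3}s)$ is bounded for $s\le t_0$. However you do not address the case $a^{-2/3}L\le s\le t_0\le t$, which is part of the first line of \eqref{ineq:mathcalI}; the paper handles it by writing $\mathcal{I}(s,t)=\mathcal{I}(s,t_0)+\mathcal{I}(t_0,t)$ and combining the two pure-regime bounds.

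The essential problem is in the second regime, and you have correctly diagnosed it but not resolved it. Your plan there hinges on establishing a uniform pointwise bound $p(z)\ge a(1+\delta)$ for all $z\ge t_0$ with a fixed $\delta>0$. This is \emph{not} a consequence of anything in $\mathcal{C}_{L,a_1}$ — the lower bound \eqref{eq:q_bnd2} only gives $p(z)/a\ge\exp(-a^{-1/6}\ln z)$, which dips below $1$ for all $z>1$ — and moreover such a uniform pointwise bound is simply false: $q=\ln(p/a)$ is a positive-recurrent diffusion with invariant density supported on all of $\R$, so it dips below any level infinitely often almost surely, and the logarithmically-growing slack $-a^{-1/6}\ln t$ in \eqref{eq:q_bnd2} is exactly what accounts for these excursions. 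So the ``strengthening of Proposition \ref{prop:phibnd}'' you invoke as a fallback cannot exist, and the ``recurrence/stability'' argument you gesture at needs to control a time-average, not a pointwise value.

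The paper's actual mechanism is an integrated version of exactly that idea, obtained for free by integrating the SDE for $q$ on $[s,t]$:
\begin{align*}
a\int_s^t (e^{q(z)}-1)\,dz = a(t-s)+\tfrac{2}{\sqrt{\beta}}(B_{2a}(t)-B_{2a}(s))-(q(t)-q(s))-a\int_s^t e^{-q(z)-z}\,dz\,,
\end{align*}
where the left-hand side is $\int_s^t(p(z)-a)\,dz$. The lower bound in \eqref{eq:q_bnd2} yields $-q(z)-z\le a^{-1/6}\ln z - z\le -t_0$ for $z\ge t_0$, so the last integral is at most $e^{-t_0}(t-s)$, and the boundary terms $q(t)-q(s)$ are controlled by \eqref{eq:q_bnd2} together with $\ln t\le \ln s + t_0^{-1}(t-s)$. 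Substituting into $\mathcal{I}(s,t)=-2\int_s^t(p(z)-a)\,dz + \tfrac2{\sqrt\beta}(B_{2a}(t)-B_{2a}(s))$ produces the linear term $-2a(1-e^{-t_0})(t-s)$ directly, without ever needing $p(z)>a$ pointwise. This identity is the missing ingredient in your proposal; without it, the argument in the regime $t\ge s\ge t_0$ does not go through.
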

\begin{proof}
We first  prove the case when $t\geq s\geq t_0$ in  (\ref{ineq:mathcalI}). From this point on we will work on the event $\mathcal{C}_{L,a_1}$ with $a_1\ge c_L$, allowing us to assume the inequalities (\ref{eq:q_bnd1})--(\ref{eq:B2abound}). 
	Let us define 
	\[
	q(t) := q^{(2a)}(t) :=\ln  p^{(2a)}(t)-\ln  a.
	\]
	On the event $\mathcal{C}_{L,a_1}$, and for $t \geq t_0$, $q(t)$ is well defined as $p(t) >0$.  By It\^o's formula the process $q$  satisfies the following differential equation:
	\[
	dq(t) = \tfrac{2}{\sqrt{\beta}}dB_{2a}(t) + a(2-e^{q(t)}-e^{-t-q(t)})dt, 
	\]
with the initial condition $q(t_0)=\ln (p(t_0)/a)>0.$
Note that the drift of the diffusion $q$ will be close to $a(2-e^q)$ for large $t$. The  corresponding diffusion 
\[
d\tilde q=\tfrac{2}{\sqrt{\beta}}dB_{2a}(t) + a(2-e^{\tilde q(t)})dt, 
\]
converges to a stationary distribution supported on $\R$ (which can be computed explicitly). This suggests that $q$ behaves like the stationary solution of $\tilde q$, and hence we cannot expect  to get a uniform constant bound on $a(e^{q(t)}-1)=p(t)-a$ in (\ref{defmathcalI}).  Because of this we instead look for a bound on the integral term  in (\ref{defmathcalI}).

We start with the following identity: for all $t \geq s \geq t_0$, we have
	\begin{align}\label{id:qdiffusion}
	a\int_s^t (e^{q(z)}-1)dz &= a(t-s)+\tfrac{2}{\sqrt{\beta}}(B_{2a}(t)-B_{2a}(s))-(q(t)-q(s))-a\int_s^t e^{-q(z)-z}dz\,.
	\end{align}
Using the lower bound from \eqref{eq:q_bnd2} and the fact that $-a^{-1/6}\ln  t\geq -t+t_0$ for all $t\geq t_0$, we get
\begin{align}\label{eq:123bound}
a\int_s^t (e^{q(z)}-1)dz &\geq a(1-e^{-t_0})(t-s)+\tfrac{2}{\sqrt{\beta}}(B_{2a}(t)-B_{2a}(s))-(q(t)-q(s)).
\end{align}
and thus
\[
\mathcal{I}(s,t) \leq -2a(1-e^{-t_0})(t-s)-\tfrac{2}{\sqrt{\beta}} (B_{2a}(t)-B_{2a}(s))+2q(t)-2q(s).
\]
Using the inequality   $\ln  t\leq  \ln  s+t_0^{-1}(t-s)$ for $t\geq s\geq t_0$, the bounds \eqref{eq:q_bnd2}, \eqref{eq:B2abound}, and by our choice of  $c_L$, we get that there exist positive constants  $c_1, c'_1$ such that for all $t \geq s \geq t_0$, we have
\begin{align*}
\mathcal{I}(s,t) \leq - c_1\, a \, (t-s) + 5 a^{-1/6} \ln  s + c'_1\,.
\end{align*}
This completes the proof of (\ref{ineq:mathcalI}) in the case  $t\geq s\geq t_0$.

Let us consider now the case $a^{-2/3}L\leq s< t_0$. From  \eqref{eq:q_bnd1} we have for all $a^{-2/3} L\leq s \leq t\leq t_0$,
\begin{align*}
\int_{s}^{t} (p(z)-a) dz\geq \frac23 \,a\, d_1 (t^{3/2}-s^{3/2}) \geq \frac23\, a\, d_1 \sqrt{s}(t-s) \,.
\end{align*}
Using the lower bound from \eqref{eq:B2abound} we deduce that for all $a^{-2/3} L\leq s \leq t\leq t_0$,
\begin{align*}
\mathcal{I}(s,t) \leq - \frac{4}{3}\, a\, d_1 \sqrt{s}(t-s) + a^{1/2} (t-s)  + a^{-1/6} \ln  (a^{2/3}s)\,.
\end{align*}
As $a^{-2/3}L\leq s\leq t_0 $ and $a\ge a_1\geq c_L$, we  get that there exists a constant $c_I$ such that:
\begin{align*}
\mathcal{I}(s,t) \leq - d_1 \,a \, \sqrt{s}(t-s) + c_I\,.
\end{align*}
For $t \geq t_0 \geq s\geq a^{-2/3}L$, note that $\mathcal{I}(s,t) = \mathcal{I}(s,t_0) + \mathcal{I}(t_0,t)$. Therefore, we get
\begin{align*}
\mathcal{I}(s,t) &\leq - d_1\, a\, \sqrt{s}(t_0-s) + c_I - c_1\, a\,  (t-t_0) + 5 a^{-1/6} \ln  t_0 + c'_1\,,\\
&\leq -c_2\,a\,\sqrt{s}(t-s)+c_I',
\end{align*}
where $c_2= \min\{d_1,c_1t_0^{-1/2}\}$. We choose $c=\min\{c_1,c_2\}$ and $c' = \max\{c'_1,c_I,c'_I\}$ to conclude the proof of (\ref{ineq:mathcalI}). 
\end{proof}

As a consequence of Proposition \ref{prop:phibnd}, we can also show that $a^2$ is not an eigenvalue of $\mathfrak{G}_{\beta,2a}$ if $a$ is large enough. 

\begin{corollary}\label{prop:eigenvalue}
Let $a_1 \geq c_L$. On the event $\mathcal{C}_{L, a_1}$ defined in Proposition \ref{prop:phibnd}, $a^2$ is not an eigenvalue of $\mathfrak{G}_{\beta,2a}$ for all $a\ge a_1$. As a consequence, there exists an a.s. finite random variable $C_{\text{ev}}>0$ such that $a^2$ is not an eigenvalue of $\mathfrak{G}_{\beta,2a}$ on the event $\{a\geq C_{\text{ev}}\}$.
\end{corollary}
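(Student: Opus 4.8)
The plan is to connect the non-existence of $a^2$ as an eigenvalue of $\mathfrak{G}_{\beta,2a}$ to the behavior of the Dirichlet solution $\phi_{\ddd}^{(2a)}$ at infinity. Recall that $a^2$ is an eigenvalue of $\mathfrak{G}_{\beta,2a}$ (as a self-adjoint operator with Dirichlet condition at $0$ and limit-point at $\infty$) precisely when the Dirichlet solution $\phi_{\ddd}^{(2a)}$ of $\mathfrak{G}_{\beta,2a}\phi=a^2\phi$ is in $L^2(\R_+, m_{2a})$. Equivalently, in terms of the event $\mathcal E_a$ from Proposition \ref{prop:hardK}, $a^2\notin\operatorname{spec}(\mathfrak{G}_{\beta,2a})$ iff $\phi_{\ddd}^{(2a)}\notin L^2(\R_+,m_{2a})$. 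So the first step is to record this standard Sturm--Liouville dichotomy (following the references \cite{Weidmann}, \cite{Teschl}): the limit-point endpoint contributes an eigenvalue exactly when the solution regular at the other endpoint is square-integrable.

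Next I would use Proposition \ref{prop:phibnd}: on the event $\mathcal{C}_{L,a_1}$ and for $a\ge a_1\ge c_L$, the Riccati transform $p^{(2a)}(t)=\phi_{\ddd}'(t)/\phi_{\ddd}(t)$ stays positive and bounded below on $[t_0,\infty)$ by $a\exp(-a^{-1/6}\ln t)$, which for $t\ge t_0$ is comparable to a positive quantity (certainly $p^{(2a)}(t)\ge c\,a$ say for $t$ in a bounded range, and $p^{(2a)}(t)\ge a\,t^{-a^{-1/6}}$ in general, which is eventually $\ge 1$). In particular $\phi_{\ddd}$ has no zeros past $t_0$ and is increasing there, so $\phi_{\ddd}(t)\ge \phi_{\ddd}(t_0)>0$ does not decay. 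Then, writing $m_{2a}(x)=e^{-(2a+1)x-\frac{2}{\sqrt\beta}B_{2a}(x)}$, I would estimate $\int_{t_0}^\infty \phi_{\ddd}(t)^2 m_{2a}(t)\,dt$ from below. Using $\phi_{\ddd}(t)^2=\phi_{\ddd}(t_0)^2\exp\left(2\int_{t_0}^t p^{(2a)}(z)\,dz\right)$ and the lower bound on $p^{(2a)}$ together with the Brownian bound \eqref{eq:B2abound}, the exponent $2\int_{t_0}^t p^{(2a)} - (2a+1)(t-t_0) - \frac{2}{\sqrt\beta}(B_{2a}(t)-B_{2a}(t_0))$ is bounded below by something like $-(t-t_0) + (\text{lower order})$ — in fact, the quantity $\mathcal I(t_0,t)$ from Proposition \ref{prop:Ibnd} is exactly $-2\int_{t_0}^t(p-a) + \frac{2}{\sqrt\beta}(B_{2a}(t)-B_{2a}(t_0))$, so that $\phi_{\ddd}(t)^2 m_{2a}(t)$ essentially equals $\phi_{\ddd}(t_0)^2 m_{2a}(t_0)\,e^{-t+t_0}\cdot e^{-\mathcal I(t_0,t)}$, and \eqref{ineq:mathcalI} gives $-\mathcal I(t_0,t)\ge c\,a(t-t_0) - 5a^{-1/6}\ln t - c'$, whence the integrand does \emph{not} decay (it grows, since $ca > 1$ for $a$ large). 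Therefore $\int_{t_0}^\infty \phi_{\ddd}^2 m_{2a} = \infty$, so $\phi_{\ddd}^{(2a)}\notin L^2(\R_+,m_{2a})$ and $a^2$ is not an eigenvalue. This proves the first sentence on $\mathcal{C}_{L,a_1}$.

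For the second sentence I would take the probabilistic input \eqref{eq:Cla}: $\lim_{L\to\infty}\lim_{a_1\to\infty}P(\mathcal{C}_{L,a_1})=1$. Fix a sequence $\eps_k\downarrow 0$; for each $k$ choose $L_k$ and then $a_1^{(k)}\ge c_{L_k}$ so that $P(\mathcal{C}_{L_k,a_1^{(k)}})\ge 1-\eps_k$. On the event $\mathcal{C}_{L_k,a_1^{(k)}}$ the random variable $C_{\text{ev}}:=a_1^{(k)}$ works, i.e.\ $a^2$ is not an eigenvalue of $\mathfrak{G}_{\beta,2a}$ for all $a\ge C_{\text{ev}}$. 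Hence, setting $C_{\text{ev}}$ to be the infimum of $a_1^{(k)}$ over all $k$ for which $\mathcal{C}_{L_k,a_1^{(k)}}$ occurs (or $+\infty$ if none occur), we get that $C_{\text{ev}}<\infty$ with probability at least $1-\eps_k$ for every $k$, hence a.s.\ finite, and on $\{a\ge C_{\text{ev}}\}$ the desired conclusion holds. The only mild subtlety — and the part I would be most careful about — is making sure the monotonicity of the events in $L$ and $a_1$ (or at least a suitable nesting/subsequence argument) is handled so that one genuinely extracts a single a.s.-finite $C_{\text{ev}}$ from the iterated limit \eqref{eq:Cla}; the analytic lower bound on $\int \phi_{\ddd}^2 m_{2a}$ itself is a direct consequence of Proposition \ref{prop:Ibnd} and should be routine.
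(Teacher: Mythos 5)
Your proposal is correct and follows essentially the same route as the paper: the criterion that $a^2$ is an eigenvalue iff $\phi_\ddd^{(2a)}\in L^2(\R_+,m_{2a})$, a pointwise lower bound on $\phi_\ddd^2\, m_{2a}$ on $\mathcal{C}_{L,a_1}$ obtained from the bound on $\mathcal{I}(t_0,t)$ (the paper works from the intermediate inequality \eqref{eq:123bound} rather than the packaged \eqref{ineq:mathcalI}, but the content is identical), and the extraction of an a.s.\ finite $C_{\text{ev}}$ from \eqref{eq:Cla}. The one point you should make explicit is that the exponential growth must win for \emph{every} $a\ge a_1\ge c_L$, not merely ``for $a$ large'': this does hold because the constant $c$ in \eqref{ineq:mathcalI} is of order $1-e^{-t_0}$ while $c_L\ge 4(1-e^{-t_0})^{-2}$, which is exactly why the paper tracks the explicit constant $2(1-e^{-t_0})$ and imposes that choice of $c_L$.
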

\begin{proof}
The value $a^2$ is not an eigenvalue of $\mathfrak{G}_{\beta,2a}$ exactly if the function $\phi_{\ddd}^{(2a)}$ is not in $L^2(\R_+,m_{2a})$. 
On $\mathcal{C}_{L, a_1}$ and for $a \geq a_1$, using the identity (\ref{id:qdiffusion})  and the bound (\ref{eq:123bound}) in the proof of Proposition \ref{prop:Ibnd},  we get
\begin{align*}
a\int_{t_0}^t e^{q(z)}dz 
&\geq a(2-e^{-t_0})(t-t_0)+\tfrac{2}{\sqrt{\beta}}(B_{2a}(t)-B_{2a}(t_0))-q(t)+q(t_0).
\end{align*}
Recall that $ae^{q(t)}=p(t)=\frac{\phi_{\ddd}'(t)}{\phi_{\ddd}(t)}$. Using the above lower bound on the integral of $ae^{q(t)}$, and the bounds (\ref{eq:q_bnd2}) and (\ref{eq:B2abound}), we get
\begin{align*}
	\phi_\ddd(t)^2 m_{2a}(t) &= \phi_\ddd(t_0)^2 \exp\Big(2  \int_{t_0}^t p(z) dz\Big) \exp(- (2a +1) t - \tfrac{2}{\sqrt{\beta}}B_{2a}(t))\\
	&\geq c(t_0)\exp\big(2a(1-e^{-t_0})t-t-a^{1/2}t-2a^{-1/6}\ln  t\big)\;,
\end{align*}
where $c(t_0)$ is an a.s.~finite random constant. Choosing $a\geq a_1\geq c_L\ge (1-e^{-t_0})^{-2}$, we get that  $\int_0^\infty \phi_\ddd(t)^2 m_{2a}(t) dt$ is infinite, proving the statement.

Now set
	\[
	C_{\text{ev}}=1+\inf_{a_1\ge c_L, L\ge 10} a_1\cdot \ind_{\mathcal{C}_{L,a_1}}.
	\]
	If  $a\ge C_{\text{ev}}$ then $a^2$ is not an eigenvalue of $\mathfrak{G}_{\beta,2a}$. 
	By the limit (\ref{eq:Cla}), the random variable $C_{\text{ev}}$ is a.s.~finite, which completes the proof. 
\end{proof}

\begin{proposition}\label{prop:prod} Recall the definition of the event $\mathcal{C}_{L,a_1}$ from Proposition  \ref{prop:phibnd}. On this event $a^2$ is not an eigenvalue of $\mathfrak{G}_{\beta,2a}$ (or $\Besselop$) if $a\ge a_1 \ge c_L$ by Corollary \ref{prop:eigenvalue}, hence $\tilde \phi_\infty$ is well-defined. 
There exist deterministic constants $c_1, c>0$
such that for all $L\geq 10$ and $a_1\ge c_L$, the following inequalities hold on $\mathcal{C}_{L,a_1}$: for all $a\geq a_1$,
	\begin{align}\label{ineq:prodtildephix}
	\tilde \phi_\ddd(x) \tilde \phi_\infty(x)\le \begin{cases}
	c \,x^{-1/2}& \qquad L \le x<a^{2/3} t_0,\\
	c \, a^{-1/3} e^{-a^{-2/3} x/2} &\qquad x\ge a^{2/3} t_0,
	\end{cases}
	\end{align}
and	
	\begin{align}
 \tph_\ddd(y)^{-2}\tph_\ddd(x)^2 \leq \begin{cases} \exp(- c_1 \sqrt{x}(y-x) + c) &\; y \geq x,\quad a^{2/3} t_0 \geq x \geq L \\  
 \exp(- c_1 \,a^{1/3} \, (y-x) + 5 \,a^{-1/6} \ln  x + c) &\; y \geq x \geq a^{2/3} t_0\,.
  \end{cases}
  \label{ineq:phiyphix}
\end{align}
Moreover, under the same conditions, we also get the following inequality for all $y\geq x\geq L$:
\begin{align}\label{ineq:phiyphix2}
\tph_\ddd(y)^{-2}\tph_{\ddd}(x)^2\leq \exp\left( -c_1\sqrt{L}(y-x)+5a^{-1/6}\ln  x+ c\right).
\end{align}
\end{proposition}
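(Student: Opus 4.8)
The plan is to reduce both estimates to the functional $\mathcal{I}(s,t)$ already controlled in Proposition \ref{prop:Ibnd}, via the scaling $s=a^{-2/3}x$, $t=a^{-2/3}y$. We work throughout on the event $\mathcal{C}_{L,a_1}$ with $a\ge a_1\ge c_L$, so that by (\ref{eq:q_bnd1})--(\ref{eq:q_bnd2}) the Riccati transform $p=p^{(2a)}$ is strictly positive on $[a^{-2/3}L,\infty)$; hence $\phi_\ddd$ has no zero on that half-line, its largest zero lies below $a^{-2/3}L$, and the Wronskian representation (\ref{wronski}) is valid for arguments $\ge a^{-2/3}L$. Moreover, for $t\ge s\ge a^{-2/3}L$ we have $\phi_\ddd(t)^2=\phi_\ddd(s)^2\exp(2\int_s^t p(z)\,dz)$. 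Plugging the explicit forms of $m_{2a}$ and $s_{2a}$ into these facts yields the two algebraic identities
\[
m_{2a}(s)\, s_{2a}(z)\,\frac{\phi_\ddd(s)^2}{\phi_\ddd(z)^2}=e^{-s}e^{\mathcal{I}(s,z)},\qquad \frac{m_{2a}(s)\,\phi_\ddd(s)^2}{m_{2a}(t)\,\phi_\ddd(t)^2}=e^{t-s}e^{\mathcal{I}(s,t)},
\]
and then, combining the first with (\ref{wronski}) and recalling the definitions (\ref{phitilde}), with $s=a^{-2/3}x$ and $t=a^{-2/3}y$ we get
\[
\tilde\phi_\ddd(x)\,\tilde\phi_\infty(x)=a^{2/3}e^{-s}\int_s^\infty e^{\mathcal{I}(s,z)}\,dz,\qquad \tph_\ddd(y)^{-2}\,\tph_\ddd(x)^2=e^{t-s}e^{\mathcal{I}(s,t)}.
\]

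With these in hand I would insert the bounds of Proposition \ref{prop:Ibnd}. When $L\le x<a^{2/3}t_0$ we have $a^{-2/3}L\le s<t_0$, the first line of (\ref{ineq:mathcalI}) holds for every $z\ge s$, and $\int_s^\infty e^{-ca\sqrt s(z-s)}dz=(ca\sqrt s)^{-1}$; using $e^{-s}\le1$ and the scaling identity $a^{2/3}(a\sqrt s)^{-1}=x^{-1/2}$ this gives $\tilde\phi_\ddd(x)\tilde\phi_\infty(x)\le Cx^{-1/2}$. When $x\ge a^{2/3}t_0$ we have $s\ge t_0$, the second line of (\ref{ineq:mathcalI}) gives $\int_s^\infty e^{\mathcal{I}(s,z)}dz\le Ca^{-1}s^{5a^{-1/6}}$, whence $\tilde\phi_\ddd(x)\tilde\phi_\infty(x)\le Ca^{-1/3}e^{-s}s^{5a^{-1/6}}$; since $c_L\ge10^3$ forces $5a^{-1/6}\le5c_L^{-1/6}<2$, the factor $e^{-s/2}s^{5a^{-1/6}}$ is bounded by an absolute constant for $s\ge t_0$, so $\tilde\phi_\ddd(x)\tilde\phi_\infty(x)\le Ca^{-1/3}e^{-s/2}=Ca^{-1/3}e^{-a^{-2/3}x/2}$, which is (\ref{ineq:prodtildephix}). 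For the ratio I would use the same two regimes together with the scaling identities $a\sqrt s(t-s)=\sqrt x(y-x)$, $a(t-s)=a^{1/3}(y-x)$, $t-s=a^{-2/3}(y-x)$ and the bound $\ln s=\ln x-\tfrac23\ln a\le\ln x$: the first line of (\ref{ineq:mathcalI}) yields $\tph_\ddd(y)^{-2}\tph_\ddd(x)^2\le e^{c'}\exp((a^{-2/3}-c\sqrt x)(y-x))$ and the second yields $\le e^{c'}\exp((a^{-2/3}-ca^{1/3})(y-x)+5a^{-1/6}\ln x)$. In both regimes the $a^{-2/3}$ in the exponent is negligible next to the leading rate, because $a\ge c_L$ makes $a^{-2/3}$ small relative to $c\sqrt x\ge c\sqrt L$ and to $ca^{1/3}$; after absorbing it and shrinking $c$ to a smaller constant $c_1>0$ we obtain the two cases of (\ref{ineq:phiyphix}). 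Finally (\ref{ineq:phiyphix2}) follows from these by bounding $\sqrt x\ge\sqrt L$ in the first case and $a^{1/3}\ge(10L)^{1/2}\ge\sqrt L$ (valid since $a\ge c_L\ge(10L)^{3/2}$) in the second, the nonnegative term $5a^{-1/6}\ln x$ only weakening the inequality.

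The substance of the argument is entirely in the two algebraic identities above combined with the already-proven bounds on $\mathcal{I}$; the rest is bookkeeping, using only the elementary integral $\int_s^\infty e^{-\alpha(z-s)}dz=\alpha^{-1}$ and monotonicity of $x\mapsto\sqrt x$ and $a\mapsto a^{1/3}$. The one step that needs care is verifying that each sub-leading prefactor — $e^{-s}$, $e^{t-s}$, $s^{5a^{-1/6}}$, and $\ln s$ versus $\ln x$ — is genuinely dominated by the main exponential decay uniformly over $a\ge a_1$ and over the stated range of $x$; this is exactly where the quantitative choice $a_1\ge c_L\ge(10L)^{3/2}$ enters, in particular to compare $a^{-2/3}$ with $c\sqrt L$ when $x$ is near $L$.
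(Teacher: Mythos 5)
Your proof is correct and follows the same route as the paper: reduce both products to the functional $\mathcal{I}(s,t)$ via the Wronskian identity (\ref{wronski}) and the explicit forms of $m_{2a}$, $s_{2a}$, then feed in the bounds of Proposition \ref{prop:Ibnd} with the scaling $s=a^{-2/3}x$. The paper's proof is much terser (it states the two key identities and then says the estimates ``follow directly''), so your write-up essentially supplies the bookkeeping the paper omits; the only cosmetic difference is that the paper writes the integral representation for $\tilde\phi_\ddd(x)\tilde\phi_\infty(x)$ in the $x$-variable while you stay in the $s$-variable, which agree after the obvious substitution.
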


\begin{proof}
Recall  the definition of $\tilde \phi_{\ddd}, \tilde \phi_{\infty}$
	from (\ref{phitilde}). 
	On $\mathcal{C}_{L,a_1}$, the diffusion $p(t)$ does not explode on $[a^{-2/3}L,\infty)$, which also implies the largest zero of $\phi_\ddd^{(2a)}$ is smaller than $a^{-2/3}L$. By the Wronskian identity (\ref{wronski}), for all $x \geq L$ we have
	\begin{align}
	{ \tilde\phi_\infty(x)}{\tilde \phi_{\ddd}(x)}
	&=a^{2/3} s(a^{-2/3} x)m_{2a}(a^{-2/3} x)\int_{a^{-2/3}x}^\infty   \phi_{\ddd}(a^{-2/3} x)^2 \phi_{\ddd}(y)^{-2} \frac{s(y)}{s(a^{-2/3} x)} dy\notag \\
	&=e^{-a^{-2/3} x} \int_{x}^\infty  \exp\big(\mathcal{I}(a^{-2/3} x, a^{-2/3} y)\big) dy\,,
	\label{boundtildephix}
	\end{align}
	where
	\begin{align*}
	\mathcal{I}(s,t) := -2 \int_{s}^{t}(p(z)-a )dz+\frac{2}{\sqrt{\beta}}(B_{2a}(t)-B_{2a}(s))\,.
	\end{align*}
	For the product $\tph_d(y)^{-2}\tph_d(x)^2$ for $y\geq x\geq L$, we have
	\begin{align*}
	\tph_\ddd(y)^{-2}\tph_\ddd(x)^2
	&=  \exp\big( a^{-2/3}(y-x)+\mathcal{I}(a^{-2/3}x,a^{-2/3}y)\big)\,.	\end{align*}
For $a_1\geq c_L$, (\ref{ineq:phiyphix}) follows from (\ref{ineq:mathcalI}) directly. Integrating the exponential of (\ref{defmathcalI}) and using the upper bounds (\ref{ineq:mathcalI}), we get (\ref{ineq:prodtildephix}) and the statement of the proposition. The inequality (\ref{ineq:phiyphix2}) follows by comparing the upper bounds in  (\ref{ineq:prodtildephix}) and  (\ref{ineq:phiyphix}).
\end{proof}

We now turn to the proof of Lemma \ref{l:Step3}. We will  use the following identity, that follows from the linearity of the equation $\mathfrak{G}_{\beta,2a}\phi=a^2\phi$: 
\begin{align}
&\tilde\phi_\infty(x)-\tilde\phi_L(x)=m_{2a}^{1/2}(a^{-2/3}x)\frac{\phi_{\infty}(a^{-2/3}L)}{\phi_\ddd(a^{-2/3}L)}\phi_{\ddd}(a^{-2/3}x)=\frac{ \tilde\phi_\infty(L)}{\tilde \phi_{\ddd}(L)}\tph_\ddd(x). \label{identitytildephi}
\end{align}
By Propositions \ref{prop:phibnd} and \ref{prop:Ibnd},  we have that $\tph_\ddd(L)\neq 0$ and $\tph_\infty$ is well-defined for all $a\geq a_1$ on the event $\mathcal{C}_{L,a_1}$.

\begin{proof}[Proof of Lemma \ref{l:Step3}]
For $L\ge 10$ define the event
	\[
	\mathcal{C}_{L}^{(1)} = \Big\{\psi_\ddd(K)^{-2}\int_0^K\psi_\ddd(x)^2dx\leq 2K^{-1/2},\quad \mbox{for all } K\geq L\Big\}\cap\{\psi_\ddd(t)>0,\quad \forall t\geq L\}\,.
	\]
	The family of events $\mathcal{C}_L^{(1)}, L\ge 10$ is non-decreasing in $L$ and $\lim_{L\to\infty}\mathbb{P}(\mathcal{C}_L^{(1)})=1$, by Proposition \ref{pr:Airy}. Define the events
	\[
	\mathcal{C}_{L,a_1}^{(2)} = \mathcal{C}_{L,a_1} \cap \mathcal{C}_{L}^{(1)}\cap \Big\{\tph_\ddd^{(2a)}(L)^{-2}\int_0^L \tph_\ddd^{(2a)}(x)^2dx\leq 3L^{-1/2},\quad \forall a\geq a_1\Big\}\,.
	\]
    The family $\mathcal{C}_{L,a_1}^{(2)}$ is non-decreasing in  $a_1$ for fixed $L$ and the events $\cup_{a_1} \mathcal{C}_{L,a_1}^{(2)}$ are non-decreasing in $L$. By the uniform convergence of $(\tph_\ddd,\tph_\ddd')\to(\psi,\psi')$ on $[0,L]$, we have 
    \[
    \lim_{L\to\infty}\lim_{a_1\to\infty}\mathbb{P}(\mathcal{C}_{L,a_1}^{(2)})=1.
    \] 
 We now prove inequalities on the event $\mathcal{C}_{L,a_1}^{(2)}$ for all $a_1\ge c_L$. In the following, $c'$ is a  constant that may change from line to line. We start with the following identity: 
	\[
	\|K_{\textsf{G},2a}-K_{\textsf{G},2a}^{(L)}\|_2^2=\iint_{[0,L]^2} \left|K_{\textsf{G},2a}(x,y)-K_{\textsf{G},2a}^{(L)}(x,y)\right|^2 dx \, dy+\iint_{\R_+^2\setminus [0,L]^2}\left|K_{\textsf{G},2a}(x,y)\right|^2 dx \, dy.
	\]
	On $[0,L]^2$ we have
	\begin{align*}
	\iint_{[0,L]^2} \left|K_{\textsf{G},2a}(x,y)-K_{\textsf{G},2a}^{(L)}(x,y)\right|^2 dx \, dy &=2\int_0^L\int_0^y \tilde \phi_{\ddd}(x)^2 ( \tilde\phi_\infty(y)-\tilde\phi_L(y))^2 dx dy\,,\\
	&=\left(\tilde \phi_{\ddd}(L)^{-2}\int_0^L \tilde \phi_{\ddd}(x)^2 dx \right)^2 { \tilde\phi_\infty(L)^2}{\tilde \phi_{\ddd}(L)^2}\,,\\
	&\leq (3L^{-1/2})^2(cL^{-1/2})^2\,,
	\end{align*}
	using identity \eqref{identitytildephi} for the second line and  the bound (\ref{ineq:prodtildephix})  for $x = L$ for the third line. Thus this term is bounded by $c'L^{-2}$ uniformly in $a$.

    We further split the region  $\R_+^2\setminus [0,L]^2$ into the union of  $\mathcal{R}_1=[L,\infty)\times [0,L]\cup [0,L]\times [L,\infty)$ and $\mathcal{R}_2 = [L,\infty)^2$.
    On $\mathcal{R}_1$ we have:
	\begin{align*}
	\iint_{\mathcal{R}_1} \left|K_{\textsf{G},2a}(x,y)\right|^2 dx \, dy
	&=\Big(2 \tilde \phi_{\ddd}(L)^{-2} \int_0^L  \tilde \phi_{\ddd}(x)^2 dx \Big) \tph_{\ddd}(L)^{2} \int_L^\infty \tilde\phi_\infty(y)^2 dy\,.
	\end{align*}
    The first term $2\, \tilde \phi_{\ddd}(L)^{-2} \int_0^L  \tilde \phi_{\ddd}(x)^2 dx$ is bounded from above by $6\, L^{-1/2}$. For the second term, we split the integral, and apply Proposition \ref{prop:prod} to get the following upper bound:
	\begin{align*}
	\tph_{\ddd}(L)^{2}& \int_L^\infty \tilde\phi_\infty(y)^2 dy\\
	& = \int_{L}^{a^{2/3}t_0}\tph_\infty(y)^2\tph_\ddd(y)^2 \tph_\ddd(y)^{-2}\tph_\ddd(L)^2 dy + \int_{a^{2/3}t_0}^\infty\tph_\infty(y)^2\tph_\ddd(y)^2 \tph_\ddd(y)^{-2}\tph_\ddd(L)^2 dy\\
	& \leq \int_{L}^{a^{2/3}t_0} c^2y^{-1}e^{-c_1 \sqrt{L}(y-L)+c}dy +\int_{a^{2/3}t_0}^\infty c^2a^{-2/3}e^{-a^{-2/3}y}e^{-c_1\sqrt{L}(y-L)+c}dy\\
	& \leq c' (L^{-3/2}+L^{-1/2}a^{-2/3}).
	\end{align*}

    At last, on $\mathcal{R}_2$ we have
	\begin{align*}
	\iint_{\mathcal{R}_2} \left|K_{\textsf{G},2a}(x,y)\right|^2 dx dy 
	&= 2\int_{L}^{a^{2/3}t_0}\int_L^y \tph_{\ddd}(x)^2 \tph_\infty(y)^2 dx dy + 2\int_{a^{2/3}t_0}^\infty\int_L^y\tph_{\ddd}(x)^2 \tph_\infty(y)^2 dx dy.
	\end{align*}
	We use (\ref{ineq:prodtildephix}) and (\ref{ineq:phiyphix}) to bound the first integral,
	\begin{align*}
	\int_{L}^{a^{2/3}t_0}\int_L^y \tph_{\ddd}(x)^2 \tph_\infty(y)^2 dx dy&= \int_L^{a^{2/3}t_0}\tph_\ddd(y)^2\tph_\infty(y)^2\int_L^y \tph_\ddd(y)^{-2}\tph_\ddd(x)^2 dx dy\\
	&\le \int_L^{a^{2/3}t_0}c^2 y^{-1}\int_L^y e^{-c_1\sqrt{x}(y - x) + c} dxdy\\
	&\le \int_L^{a^{2/3}t_0}c' y^{-3/2} dy\\
	&\le c' L^{-1/2}\,.
	\end{align*}
		For the second integral, we use (\ref{ineq:prodtildephix}) and (\ref{ineq:phiyphix2}), 
	\begin{align*}
	\int_{a^{2/3}t_0}^\infty \int_L^{y}  \tph_\infty(y)^2 \tph_{\ddd}(x)^2  dx dy &= \int_{a^{2/3}t_0}^\infty \tph_\infty(y)^2\tph_\ddd(y)^2 \int_L^y \tph_\ddd(y)^{-2}\tph_{\ddd}(x)^2  dx dy\\
	&\leq  \int_{a^{2/3}t_0}^\infty c^2a^{-2/3}e^{-a^{-2/3}y}\int_{ L}^y e^{-c_1 \sqrt{L}(y-x)+5a^{-1/6}\ln  y+ c}dx dy\\
	&\leq \int_{a^{2/3}t_0}^\infty c'L^{-1/2}a^{-2/3}e^{-a^{-2/3}y+5a^{-1/6}\ln  y} dy\\
	&\leq c'L^{-1/2}\,.
	\end{align*}
	
	
Recall that the family of events $\mathcal{C}_{L,a_1}^{(2)}$ is non-decreasing in $a_1$ for fixed $L$, and the events $\mathcal{C}_{L}^{(2)}:=\cup_{a_1}\mathcal{C}_{L,a_1}^{(2)}$ satisfy $\mathcal{C}_{L}^{(2)}\uparrow \Omega$ as $L\to\infty$ with $P(\Omega)=1$. On the event $\Omega$ we have
\[
	\lim\limits_{L\to \infty} \limsup\limits_{a\to \infty} \|K_{\textsf{G},2a}-K_{\textsf{G},2a}^{(L)}\|_2^2=0,
\]
which completes the proof.
\end{proof}

\begin{remark}\label{rmk:convergence rate}
Note that our estimates give an upper bound of the order $O(L^{-1/2})$ on the squared Hilbert-Schmidt norm difference of $K_{\textsf{G},2a}$ and $K_{\textsf{G},2a}^{(L)}$. A bound of the same order was shown on the  truncation error for  $K_{\textsf{A}}$. 

By choosing $L=L_a$ to be dependent on $a$ with $L_a\to \infty$ at some rate, one could potentially obtain a bound on the rate of convergence in (\ref{HSlim}). This would require the extension of the result of Lemma \ref{l:SDE_lim} to increasing intervals $[0,L_a]$. We do not explore this path in this paper, but we want to present a hand-waving argument to show that our methods are not expected to give better than logarithmic convergence.  


In the proof of Proposition \ref{prop:sdelim}, we  viewed the process $(\hat u,\hat{u}')$ as a stochastic flow depending on two variables $\eps=a^{-1/3}$ and $x$. It is reasonable to expect that if the statement of  Lemma \ref{l:SDE_lim} holds on the interval $[0,L_a]$ then $\sup_{x\leq L_a}|\hat u_\eps(x)-\hat u_0(x)|$ should vanish as $a\to \infty$. This quantity should be of the same order as $\eps \sup_{x\leq L_a} |v(x)|$ where $v(x)=\partial_\eps \hat u_\eps(x)|_{\eps=0}$.  One can check that $v$ satisfies the stochastic differential equation,
\begin{align*}
dv=v'dx,\quad dv'=v(\tfrac{2}{\sqrt{\beta}}dB+xdx) + \tfrac{2}{\beta}\hat u_0(x)dx + \tfrac{2}{\sqrt{\beta}}\hat u_0'(x)dB
\end{align*}
with initial values $v(0)=0$ and $v'(0)=0$. If we assume that $v'$ grows at least as fast as the contribution of the $\tfrac{2}{\beta}\hat u_0(x)dx$ term then we would get that $v$ grows at least as fast as $e^{\tfrac12 x^{3/2}}$. This would lead to the requirement $a^{-1/3} e^{\tfrac12 L_a^{3/2}}\to 0$, and $L_a\ll(\ln a)^{2/3}$. Hence the speed of convergence could not be faster than $(\ln a)^{-1/3}$.
\end{remark}

\section{Bounds on the soft and hard edge diffusions}
\subsection{Asymptotic properties of the soft edge diffusion $\psi_{\ddd}$}
\label{s:Xlemmas}

This section contains the proofs of Lemma \ref{l:Airy_close} and  \ref{l:Airy_stays},  which were used for the asymptotic analysis of the diffusion $X$ in (\ref{AirySDE}). In this section  we set $f(t)=t^{-1/4}\ln  t$.

\begin{proof}[Proof of Lemma \ref{l:Airy_close}]
We will prove that  
\begin{align}\label{region1}
\lim_{t_0\to \infty} P\left(|X(t)-\sqrt{t}|\le \tfrac12 f(t) \text{ for some $t\in [t_0,t_0 +  \tfrac{1}{\sqrt{t_0}}\ln ^3(t_0)]$}\right)=1.
\end{align}
This means that with higher and higher probability we will hit the region $|X(t)-\sqrt{t}|\le \frac12 f(t)$ within a small time interval, which implies that $\sigma_s<\infty$ with probability one. 

To prove \eqref{region1} we consider $X$ with initial condition $X(t_0)=x_0$ with $t_0\ge 10$, $x_0\in \R$, and give a bound on the probability in (\ref{region1}) in each of the following cases (see Figure \ref{differentcases_RT}):
\begin{align*}
    \text{Case I:\qquad}& \qquad\qquad\qquad x_0>\sqrt{t_0}+f(t_0)/2\\
    \text{Case II:\qquad}& \qquad\qquad\qquad x_0<-\sqrt{t_0}-f(t_0)\\
    \text{Case III:\qquad}& -\sqrt{t_0}+f(t_0)<x_0<\sqrt{t_0}-\frac12 f(t_0)\\
    \text{Case IV:\qquad}& -\sqrt{t_0}-f(t_0)\le x_0\le -\sqrt{t_0}+f(t_0).
\end{align*}
In each one of these cases we will compare the diffusion to a time-homogeneous version of itself.  Then 
in Cases I-III we use the idea that as long as we control the maximal value of the Brownian motion $B$, the diffusion will stay close to the deterministic path solving the ODE $x(t)' = t- x(t)^2$ which is what we get if we remove the noise from the SDE of $X$. In Case IV we will use explicit computations about hitting times of diffusions.

\begin{figure}
\centering
\includegraphics[width = 9cm]{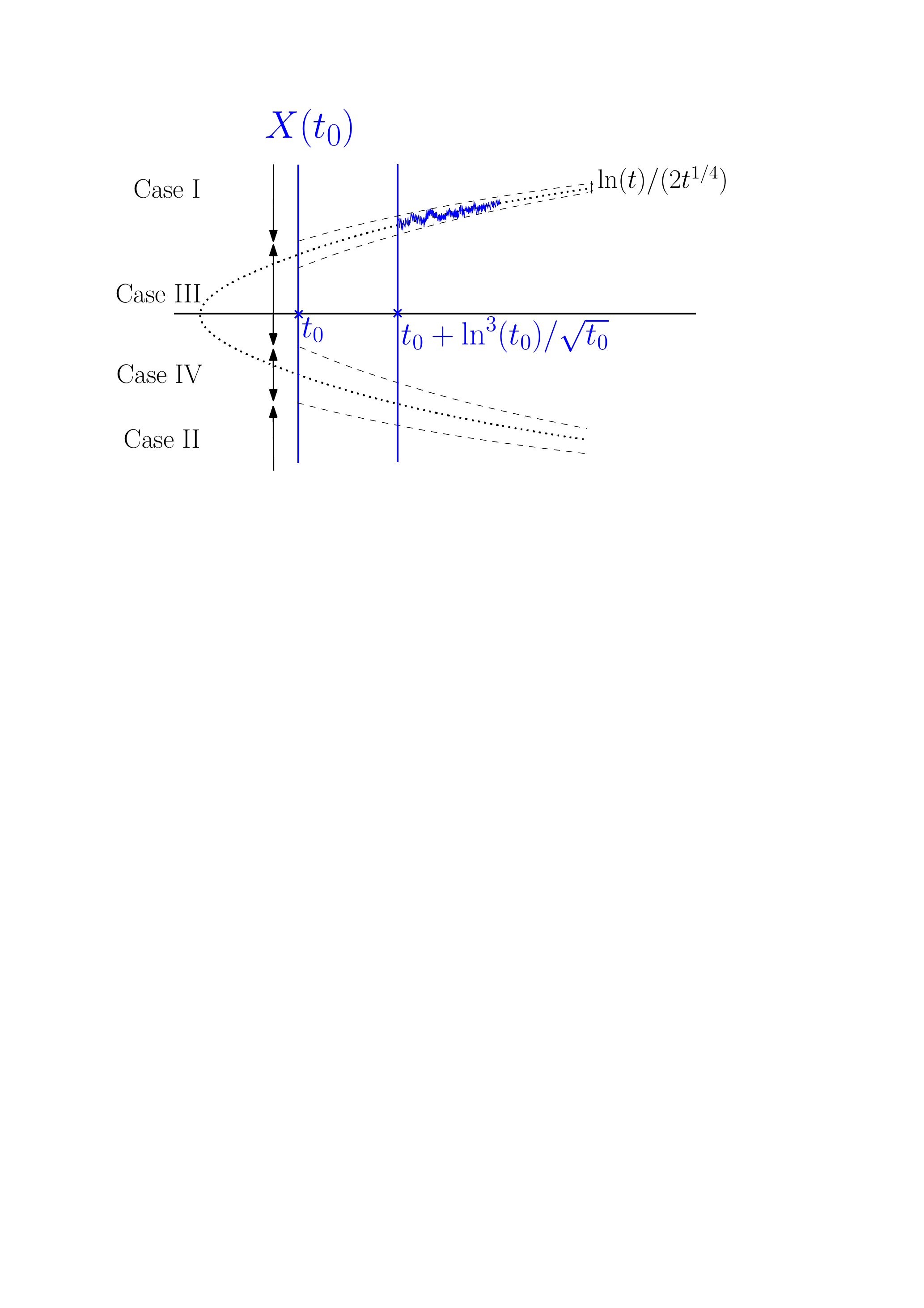}
\caption{Representation of the four different cases for the position of $X(t_0)$}\label{differentcases_RT}
\end{figure}

Let $g(x)=x+\frac{1}{\sqrt{x}}\ln (x)$. We consider Case I, when $x_0 > \sqrt{t_0} + f(t_0)/2$. We set  $t_1=g(t_0)$ and assume that $t_0$ is large enough. 
Let the time-homogeneous diffusion $X_{+}$ on $[t_0, t_1]$ be given by the strong solution of 
\[
dX_{+}(t) = (t_1 - X_{+}(t)^2)dt + \tfrac{2}{\sqrt{\beta}} dB(t), \qquad X_{+}(t_0)=+\infty.
\]
Comparing the drifts of $X_+$ and $X$ we see that on the event $\{X(t)>\sqrt{t}, t\in [t_0,t_1]\}$ we have $X_+(t)\ge X(t)$ for $t\in [t_0,t_1]$. 

The process $Z(t):= X_{+}(t) - \tfrac{2}{\sqrt{\beta}}\, \tilde B(t)$ with $\tilde B(t)=B(t)-B(t_0)$ satisfies the ODE
\[
Z'(t) =t_1 - Z(t)^2(1+ \tfrac{\tfrac{2}{\sqrt{\beta}} \tilde B(t)}{Z_t})^2, \qquad Z(t_0)=\infty
\]
for all time $t\ge t_0$ smaller than the first hitting time of $0$  for $Z$. We set 
\[
M := \tfrac{1}{10}f(t_0)=\tfrac{1}{10}t_0^{-1/4}\ln  t_0,
\]
and introduce the event 
\[
\mathcal{A}=\mathcal{A}_{t_0}:= \{\sup_{t \in [t_0,t_1]}\;|B(t) - B(t_0)| \leq \tfrac{\sqrt{\beta}}{2} M\}.
\]
Note that 
\[
P(\mathcal{A})= P\left(\sup_{s\in [0,1]} |B(s)|\le \tfrac{\sqrt{\beta}}{20}\sqrt{\ln  t_0}\right)
\]
which shows that $P(\mathcal{A}_{t_0})\to 1$ as $t_0\to \infty$. 

On  the event $\mathcal{A}$, if $Z(s)=\sqrt{t_0}$ for an $s\in [t_0,t_1]$ then this would imply 
\[
X(s)\le \sqrt{t_0}+M\le \sqrt{s}+f(s)/2.
\]
On $\widetilde{\mathcal{A}}=\mathcal{A}\cap \{Z(t)>\sqrt{t_0}, t\in [t_0, t_1]\}$, $Z$ is bounded from above by the deterministic solution of 
\[
F'(t) = t_1 - F(t)^2(1-2M/\sqrt{t_0}), \qquad F(t_0)=\infty,
\]
which is given by 
\begin{align*}
F(t)=\sqrt{t_1/D}\coth(\sqrt{t_1D}(t-t_0)), \qquad D=1-2M/\sqrt{t_0}.
\end{align*}
Using Taylor-expansion, we get that for   $t_0$  large enough we have  $F(t_1)  \leq \sqrt{t_0} + 2  M$ which implies that on $\widetilde{\mathcal{A}}$ we must have $X(t_1)\le \sqrt{t_0}+3M\le \sqrt{t_1}+\frac12 f(t_1)$. 
This shows that 
\[
\mathcal A\subset \{|X(t)-\sqrt{t}|\le \tfrac12 f(t) \text{ for some } t\in [t_0,t_1]\},
\]
which implies
\[
\lim_{t_0\to \infty}\inf_{x_0>\sqrt{t_0}+\frac12f(t_0)}P_{x_0, t_0}\left(|X(t)-\sqrt{t}|\le \tfrac12 f(t) \text{ for some $t\in [t_0,t_0 +  \tfrac{1}{\sqrt{t_0}}\ln ^3(t_0)]$}\right)=1.
\]

Next we consider the case $x_0<-\sqrt{t_0} - f(t_0)$ (this is Case II).
Similar arguments used as in Case I show that for $t_0$ large enough   $X$ explodes to $-\infty$ before time $t_1=g(t_0)$ on the event $\mathcal{A}$. Since $X$ restarts at $+\infty$ at the explosion, we are back in Case I, and by the arguments presented there 
 we get that $|X(t)-\sqrt{t}|\le \frac12 f(t)$ must hold  before time $g(t_1)$ with high probability.   Since $g(t_1)\le t_0+\ln ^3 t_0/\sqrt{t_0}$ for $t_0$ large, we get 
 \[
\lim_{t_0\to \infty}\inf_{x_0<-\sqrt{t_0}-\frac12f(t_0)}P_{x_0, t_0}\left(|X(t)-\sqrt{t}|\le \tfrac12 f(t) \text{ for some $t\in [t_0,t_0 +  \tfrac{1}{\sqrt{t_0}}\ln ^3(t_0)]$}\right)=1.
\]

Now consider  Case III, when $x_0 \in (-\sqrt{t_0} + f(t_0), \sqrt{t_0} - f(t_0)/2)$. We show that   $X$
 reaches $\sqrt{t_0} - f(t_0)/2$ before time $t_1$ with probability going to $1$. For this we can just assume  that $x_0 = -\sqrt{t_0} + f(t_0)$, since the other cases stochastically  dominate this one by a simple coupling.  Let us examine again $Z = X - \tfrac{2}{\sqrt{\beta}} \tilde B$. The process $Z(t)$ satisfies the ODE
 \[
 Z'(t)=t-(Z(t)+\tfrac{2}{\sqrt{\beta}}\tilde B(t))^2, \quad Z(t_0) = -\sqrt{t_0}+f(t_0). 
 \]
On the event $\mathcal{A}$, the process $Z$ is increasing when $-\sqrt{t}+M\leq Z(t)\leq \sqrt{t}-M$, in particular $Z'(t_0)>0$. Before $Z$ hits $\sqrt{t_0}$, we can bound $Z$ from below by $G(t)$ where
\[
G'(t) =  (\sqrt{t_0}-\tfrac32M)^2-G^2(t),\quad G(t_0) = -\sqrt{t_0}+f(t_0). 
\]
Solving the above initial value problem, we get $G(t) = (\sqrt{t_0}-\tfrac32 M)\tanh((\sqrt{t_0}-\tfrac32M)(t-t_0)+c)$ where $c<0$ is chosen such that $G(t_0) = -\sqrt{t_0} + f(t_0)$. Here $c\sim -\frac38 \ln  t_0$ if $t_0$ is large. Using Taylor-expansion again, we get $G(t_1)\geq \sqrt{t_0}-2M$ which implies that $X(t)\ge \sqrt{t}-f(t)/2$ somewhere in $[t_0,t_1]$.

For the last case IV when $x_0 \in [-\sqrt{t_0} - f(t_0),-\sqrt{t_0} + f(t_0)]$, denote by $\tau$ the exit time of $X(t)$ from the interval $[q^-,q^+]:=[-\sqrt{t_1}-f(t_1),-\sqrt{t_1}+f(t_1)]$. We use the time-homogeneous diffusion $\tilde X(t)$ satisfying the SDE
\[
d\tilde X(t) = (t_0 - \tilde X(t)^2)dt + \tfrac{2}{\sqrt{\beta}} dB(t),\quad \tilde X(t_0)=x_0.
\] 
Let us denote by $\tilde \tau$ the first exit time for $\tilde X$ after time $t_0$ from $(q^-,q^+)$. By the Cameron-Martin-Girsanov formula, the Radon-Nikodym derivative of $X$ with respect to $\tilde X$ on the time interval $[t_0,t_1]$ can be expressed as $e^{G(\tilde X)}$ where
\begin{align*}
G(\tilde X) = \frac{1}{(\tfrac{2}{\sqrt{\beta}})^2} \Big( \int_{t_0}^{t_1} (\tilde X(t_1) - \tilde X(t)) dt -\frac{(t_1-t_0)^3}{6} - \int_{t_0}^{t_1} t \, (t_0 - \tilde X(t)^2) dt\Big)\,.
\end{align*}
On the event $\{\tilde X(t) \in [q^-,q^+], t\in [t_0,t_1]\}$  one can bound $G(\tilde X)$ by a constant.
This means that $P(\tau>t_1)$ can be bounded by a constant times $P(\tilde \tau>t_1)$.

We can explicitly compute $E[\tilde \tau]$ in terms of the scale function and speed measure of $\tilde X$. The scale function $sc$ and speed measure $sp$ for $\tilde X(t)$ are given by
\[
sc(x) = \int_{-\infty}^x \exp(-2t_0y+\frac23 y^3)dy,\quad sp(dx) = \frac{2}{sc'(x)}dx.
\]
From this we can express the first moment of $\tilde \tau$ as
\begin{align*}
E[\tilde \tau-t_0]=&\int_{q^-}^{x_0} \frac{(sc(y)-sc(q^-))(sc(q^+)-sc(x_0))}{sc(q^+)-sc(q^-)}sp(dy)\\& + \int_{x_0}^{q^+}\frac{(sc(x_0)-sc(q^-))(sc(q^+)-sc(y))}{sc(q^+)-sc(q^-)}sp(dy).
\end{align*}
(See for example Theorem VII.3.6 \cite{RY}.)
By analyzing the above integrals as $t_0\to\infty$, one can bound $E[\tilde \tau-t_0]$ by $c\tfrac{\ln \ln  t_0}{\sqrt{t_0}}$ with an absolute constant $c$ for all $t_0$ large enough and all $x_0\in[-\sqrt{t_0}-f(t_0),-\sqrt{t_0}+f(t_0)]$. (We refer to Lemma 5.7. of \cite{DL} for additional details for this argument.) By Markov's inequality, we get
\begin{align*}
P_{x_0,t_0}[\tau>t_1] = E[\exp(G(\tilde X))\mathbf{1}_{\{\tilde \tau > t_1\}}] \leq c' \frac{\ln \ln  t_0}{(t_1-t_0)\sqrt{t_0}}=c'\frac{\ln \ln  t_0}{\ln  t_0} \,,
\end{align*}
with an absolute constant $c'$. 
Therefore $X$ exits the region $(q^-,q^+)$ before time $t_1$ with probability tending to $1$ as $t_0\to \infty$. Once $X(t)$ exits this region, we get to Case II or III, and 
repeating the arguments there we can show that
 \[
\lim_{t_0\to \infty}\inf_{x_0:|x_0+\sqrt{t_0}|\leq f(t_0)}P_{x_0, t_0}\left(|X(t)-\sqrt{t}|\le \tfrac12 f(t) \text{ for some $t\in [t_0,t_0 +  \tfrac{1}{\sqrt{t_0}}\ln ^3(t_0)]$}\right)=1.
\]
This completes the proof of (\ref{region1}) and hence the statement of the lemma.
\end{proof}

\begin{proof}[Proof of Lemma \ref{l:Airy_stays}]
Introduce $Y(t):=X(t)-\sqrt{t}$, then $Y(t)$ satisfies the stochastic differential equation
\[
dY(t) = (-Y(t)^2-2\sqrt{t}Y(t)-\frac{1}{2\sqrt{t}})dt + \tfrac{2}{\sqrt{\beta}} dB(t),
\]
with initial condition $y_0=x_0-\sqrt{t_0}$.

With the same driven noise $dB$, we define two families of  diffusions $Y_1(t)=Y_{1}^{y_0,t_0}(t)$, $Y_2(t)=Y_2^{y_0,t_0}(t)$ on $[t_0,  \infty)$ with initial condition $y_0$ as follows:
\begin{align*}
dY_1(t) &= -2\sqrt{t}Y_1(t)  dt + \tfrac{2}{\sqrt{\beta}} dB(t),\quad Y_1(t_0)=y_0,\\
dY_2(t) &= (-2\sqrt{t}Y_2(t) - 2 f(t)^2)dt + \tfrac{2}{\sqrt{\beta}} dB(t), \quad Y_2(t_0)=y_0.
\end{align*}
By comparing the drift terms in $Y, Y_1, Y_2$ we see that if for a given $t_0$ we start $Y_1, Y_2$ from $y_0=Y(t_0)$ at time $t_0$ then 
 the coupling $Y_2(t)\leq Y(t)\leq Y_1(t)$  holds for all $t\ge t_0$ on the event 
 \begin{align}
 \mathcal{D}_{t_0,y_0}:=\{-f(t)\le Y_2(t), Y_1(t)\le f(t) \text{ for all $t\geq t_0$}\}.
 \end{align}
 Consequently, this shows that
\begin{align}\label{cpl1}
\mathcal{D}_{t_0, y_0}\subset \{|Y(t)|\leq f(t),\forall t\geq t_0\},
\end{align}
and thus it is enough to prove
\begin{align}\label{probD}
    \lim_{t_0\to \infty} \inf_{|y_0|\le \frac12 f(t_0)} P(\mathcal{D}_{t_0,y_0})=1.
\end{align}

Using the integrating factor trick, both $Y_1$ and $Y_2$ can be  solved explicitly: 
\begin{align*}
Y_1(t) &=\exp(-\tfrac{4}{3}(t^{3/2}-t_0^{3/2}))y_0 + \tfrac{2}{\sqrt{\beta}} e^{-\frac{4}{3}t^{3/2}}\int_{t_0}^t e^{\frac{4}{3}s^{3/2}}d B_s,\\
Y_2(t) &=\exp(-\tfrac{4}{3}(t^{3/2}-t_0^{3/2}))y_0 - 2 \,e^{-\frac{4}{3}t^{3/2}}\int_{t_0}^t f^2(s) e^{\frac{4}{3}s^{3/2}}ds + \tfrac{2}{\sqrt{\beta}} e^{-\frac{4}{3}t^{3/2}}\int_{t_0}^t e^{\frac{4}{3}s^{3/2}}d B_s.
\end{align*}
Let $\xi(t) = \int_1^t e^{\frac{8}{3}s^{3/2}}ds$ . There exists a Brownian motion $W$ such that we have the following distributional identity:
\[
\left(\int_1^t e^{\frac{4}{3}s^{3/2}}dB_s,\,  t\ge 1 \right){\,\buildrel d\over =\,}  (W(\xi(t)), \, t\ge 1) .
\]
By the Law of Iterated Logarithm, there exist finite random constant $C$ such that
\[
|W (u)|\leq C \sqrt{u\ln \ln  u},\quad \text{ for all $u\geq 20$}.
\]
Note that $\xi(t)\leq \tfrac12 e^{\frac{8}{3}t^{3/2}}t^{-1/2}$ for all $t\geq 1$. We may assume $t_0\ge \max(10, \xi^{-1}(20))$, then for $t\geq t_0$ we get
\begin{align*}
Y_1(t) &\leq \tfrac12  e^{-\frac{4}{3}(t^{3/2}-t_0^{3/2})}f(t_0) + \tfrac{2}{\sqrt{\beta}}\,C\,e^{-\frac{4}{3}t^{3/2}}(\sqrt{\xi (t)\ln \ln  \xi(t)}+\sqrt{\xi(t_0)\ln \ln  \xi(t_0)})\\
&\leq e^{-\frac{4}{3}(t^{3/2}-t_0^{3/2})}(\tfrac{1}2 f(t_0) + \tfrac{2}{\sqrt{\beta}} C t_0^{-1/4}\sqrt{\ln  t_0}) + \,\tfrac{2}{\sqrt{\beta}} C t^{-1/4}\sqrt{\ln  t}.
\end{align*}
Integration by parts yields the bound
\[
\int_{t_0}^t f(s)^2 e^{\frac{4}{3}s^{3/2}}ds \leq  \frac1{ \sqrt{t}} \, f(t)^2 e^{\frac{4}{3}t^{3/2}}.
\]
Therefore, we obtain that
\begin{align*}		
	Y_2(t)&\geq - e^{-\frac43 (t^{3/2} - t_0^{3/2})} (\tfrac{1}{2} f(t_0) + \tfrac{2}{\sqrt{\beta}} C t_0^{-1/4} \sqrt{\ln  t_0}) - 2t^{-1/2} f(t)^2 -\tfrac{2}{\sqrt{\beta}} C t^{-1/4} \sqrt{\ln  t}\,. 
\end{align*}
For a large enough deterministic $c_0$, we have $- f(t)\leq Y_2(t)\leq Y_1(t)\leq f(t)$ for all $t\geq t_0\ge c_0$  on the event $\{C<\tfrac{\sqrt{\beta}}{20}\sqrt{\ln  t_0}\}$. 
Hence if $t_0\ge c_0$ then 
\[
\inf_{|y_0|\le \frac12 f(t_0)} P(	\mathcal{D}_{y_0, t_0})\ge P(C<\tfrac{\sqrt{\beta}}{20}\sqrt{\ln  t_0})
\]
which completes the proof of (\ref{probD}).
\end{proof}

\subsection{Bounds for the hard edge diffusion}\label{s:phi tail}


We start this section with a lemma controlling the fluctuations of  Brownian motion. Although the bounds in the  lemma are not optimal they are sufficient for our purposes.

\begin{lemma}\label{BM fluctuation}
Let $B$ be a standard Brownian motion. Then there is a random finite positive $C$ so that a.s.~we have the following inequality:
\begin{align}\label{BM_0}
|B(s+h)-B(s)|\le C \sqrt{h \ln (2+\tfrac{s}{h}+|\ln  h|)},\qquad  \text{for all }h >0, \;s >0.
\end{align}
 This implies in particular the following simple bounds:
\begin{align}\label{BM_modulus}
|B(s+h)-B(s)|\le C_1 (h+\ln  s), \qquad \text{for all } \quad h>0, \;s\ge 10,
\end{align}
with a random constant $C_1$.
%
%
\end{lemma}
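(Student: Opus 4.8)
The bound \eqref{BM_0} can be obtained by a dyadic chaining argument together with the Borel--Cantelli lemma, and \eqref{BM_modulus} then follows by elementary real-variable estimates. For the dyadic reduction: for $n\in\Z$ and integers $k\ge 0$ set $J_{n,k}=[k2^{n},(k+1)2^{n}]$ and let $M_{n,k}=\sup_{t,u\in J_{n,k}}|B(t)-B(u)|$ be the oscillation of $B$ on $J_{n,k}$. Given $s>0$ and $h>0$, pick $n\in\Z$ with $2^{n-1}\le h<2^{n}$ and put $k=\lfloor s/2^{n}\rfloor$; then $[s,s+h]\subseteq J_{n,k}\cup J_{n,k+1}$, so $|B(s+h)-B(s)|\le M_{n,k}+M_{n,k+1}$. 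On the other hand, for such $s,h$ one checks directly, using $h\ge 2^{n-1}$, $s/h\ge k$ and a lower bound for $|\ln h|$ of order $|n|$, that
\[
h\ln\!\Big(2+\frac sh+|\ln h|\Big)\;\ge\; c\,2^{n}\bigl(\ln(2+k)+\ln(2+|n|)\bigr)
\]
with an absolute constant $c>0$. Hence \eqref{BM_0} reduces to the claim that a.s.\ there is a finite $C$ with $M_{n,k}\le C\,2^{n/2}\sqrt{\ln(2+k)+\ln(2+|n|)}$ for all $n\in\Z$ and all integers $k\ge 0$.

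To prove this claim I would use Brownian scaling and the reflection principle: $M_{n,k}\stackrel{d}{=}2^{n/2}M$, where $M=\sup_{t,u\in[0,1]}|B(t)-B(u)|$ is sub-Gaussian, $P(M>x)\le C_{0}e^{-x^{2}/C_{0}}$. Taking the threshold $x_{n,k}=A\sqrt{\ln(2+k)+\ln(2+|n|)}$ with $A$ a large enough absolute constant gives $P\bigl(M_{n,k}>x_{n,k}2^{n/2}\bigr)\le C_{0}(2+k)^{-A^{2}/C_{0}}(2+|n|)^{-A^{2}/C_{0}}$, which is summable over $(n,k)\in\Z\times\{0,1,2,\dots\}$. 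By Borel--Cantelli only finitely many of the events $\{M_{n,k}>x_{n,k}2^{n/2}\}$ occur almost surely; since each $M_{n,k}$ is finite and $x_{n,k}>0$, the supremum of $M_{n,k}/(x_{n,k}2^{n/2})$ over all $(n,k)$ is then a.s.\ finite, which is the desired bound, and \eqref{BM_0} follows.

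For \eqref{BM_modulus} it suffices to show $\sqrt{h\ln(2+\frac sh+|\ln h|)}\le C_{1}(h+\ln s)$ for all $h>0$ and $s\ge 10$. Using $2+\frac sh+|\ln h|\le 4(1+\frac sh)(1+|\ln h|)$ one gets $h\ln(2+\frac sh+|\ln h|)\le 2h+h\ln(1+\frac sh)+h\ln(1+|\ln h|)$, and then $\sqrt{a+b+c}\le\sqrt a+\sqrt b+\sqrt c$ reduces matters to three term-by-term estimates: $\sqrt{2h}\le h+1$; $\sqrt{h\ln(1+|\ln h|)}\le h+1$ (treating $h\le 1$ and $h\ge 1$ separately); and $\sqrt{h\ln(1+\frac sh)}\le C(h+\ln s)$, which follows by distinguishing $h\ge\ln s$ (where $\ln(1+\frac sh)\le\ln(1+s)\le 2\ln s\le 2h$) from $h<\ln s$ (where, writing $m=\ln s/h>1$, one has $h\ln(1+\frac sh)\le C(\ln s)^{2}$ since $\tfrac{\ln m}{m}\le 1$). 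Combining these and using $1\le\ln s$ for $s\ge 10$ yields \eqref{BM_modulus}.

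The main obstacle is the uniformity in the scale index $n$ over all of $\Z$ (i.e.\ simultaneously as $h\to 0$ and $h\to\infty$) together with the position index $k$: this is precisely what forces the threshold to have the form $\sqrt{\ln(2+k)+\ln(2+|n|)}$ — an appreciably smaller power of the logarithm would break summability of the Borel--Cantelli series — and it is why verifying the elementary lower bound $h\ln(2+\frac sh+|\ln h|)\gtrsim 2^{n}(\ln(2+k)+\ln(2+|n|))$, which matches exactly the logarithmic structure coming from the tail estimate, is the real content of the reduction step.
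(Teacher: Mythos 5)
Your proposal is correct and follows essentially the same route as the paper: a dyadic grid in both scale ($2^n$, $n\in\Z$) and position, Gaussian tail bounds plus Borel--Cantelli to control the oscillations uniformly, a comparison of the dyadic threshold with $\sqrt{h\ln(2+\tfrac{s}{h}+|\ln h|)}$ to pass to general $(s,h)$, and then the same elementary case split $h\ge \ln s$ versus $h<\ln s$ for \eqref{BM_modulus}. The only differences are cosmetic (oscillation over intervals $J_{n,k}$ and the threshold $\sqrt{\ln(2+k)+\ln(2+|n|)}$ in place of the paper's $\sqrt{\ln(2+|n|+m)}$, which are equivalent up to constants).
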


\begin{proof}
First set $h=2^n$, $s=m 2^n$, for $n \in \Z$ and $m \in \N$. We have
\begin{align*}
P(\max_{x\le h}|B(s)-B(s+x)|\ge 4\cdot  2^{n/2} \sqrt{\ln (2+|n|+m)})&\le 2 P(|B(1)|\ge 4  \sqrt{\ln (2+|n|+m)})\\
&\le  2e^{-8 \ln (2+|n|+m)}=\frac{2}{(2+|n|+m)^{8}},
\end{align*}
which is summable for $n\in \Z, m\in \N$. Hence by the Borel-Cantelli Lemma, there is a random $\tilde C$ so that 
\begin{align}\label{BM_maxh}
\max_{x\le h}|B(s)-B(s+x)|\le  \tilde C  \sqrt{h} \sqrt{\ln (2+|\ln  h|+\tfrac{s}{h})}
\end{align}
for all $s=m 2^n$, $h=2^n$. For general $s>0,h>0$, there exist $n\in\Z,m\in\N$ such that $2^n<h\leq 2^{n+1}$ and $m2^n< s\leq  (m+1)2^n$. Using (\ref{BM_maxh}) and the triangle inequality, we get
\[
|B(s+h)-B(s)|\leq 
 8\tilde C\sqrt{h\ln(2+|\ln h|+\tfrac{s}{h})},
\]
which proves the first part of the lemma with $C=8\tilde C$.

For $s\ge 10$ we have
\begin{align*}
	 \sqrt{h \ln (2+\tfrac{s}{h}+|\ln  h|)} 
	 \leq \sqrt{h\ln (2+\tfrac{1}{h}+|\ln  h|)+h\ln (1+s)}.
\end{align*}
For $h\geq \ln  s$, we have
\[
\ln (2+\tfrac{1}{h}+|\ln  h|)<h,\quad \ln (1+s)< \ln  (2s)\leq 2h,
\]
which implies $ \sqrt{h \ln (2+\tfrac{s}{h}+|\ln  h|)} \le 2 h$ in this case.

Now assume $h<\ln  s$. We have   $h\ln  (2+\frac2h+\ln  h)\le 2$ for  $h\in [0,1]$, which yields $h\ln  (2+\frac1h+|\ln  h|) \leq 2 \ln(s) \ln \ln(s)$ for $h<\ln  s$, $s\ge 10$. We also have $h \ln (1+s) \leq (3/2) (\ln  s)^2$ under the same conditions, which 
yields $ \sqrt{h \ln (2+\tfrac{s}{h}+|\ln  h|)} \le 2\ln s$. The bound (\ref{BM_modulus}) now follows from (\ref{BM_0}).
\end{proof}

The next lemma  gives estimates on the  diffusion $p^{(2a)}(t)$ at time $t=a^{-2/3} L$ using the convergence result of Proposition \ref{prop:sdelim}.
\begin{lemma}\label{lem:controltimeL}
For all positive $L$ and $a_1$, let  $\mathcal{A}^{(1)}_{L,a_1}$ be the event that 
\begin{align*}
a\big(1+\tfrac{4}{5}a^{-1/3}\sqrt{L}\big)\leq p(a^{-2/3}L)\leq a\big(1+\tfrac{6}{5}a^{-1/3}\sqrt{L}\big),   \text{ for all $a \geq a_1$}.
\end{align*}
Then $\lim_{L \to \infty} \lim_{a_1 \to \infty} P(\mathcal{A}^{(1)}_{L,a_1}) = 1$.
\end{lemma}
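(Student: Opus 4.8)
The plan is to transfer the asymptotics of the Airy Riccati diffusion to the Bessel Riccati diffusion at the microscopic time $a^{-2/3}L$ by means of Proposition \ref{prop:sdelim}. First I would take $u_a=\phi_\ddd^{(2a)}$, the solution of $\mathfrak{G}_{\beta,2a}u_a=a^2u_a$ with $u_a(0)=0$, $u_a'(0)=1$, and set $\hat u_a(x)=a^{2/3}e^{-a^{1/3}x}u_a(a^{-2/3}x)$ as in Proposition \ref{prop:sdelim}. Since $(a^{2/3}u_a(0),\,u_a'(0)-au_a(0))=(0,1)$ for every $a$, that proposition applies with $\eta_0=0$, $\eta_1=1$ and yields $(\hat u_a,\hat u_a')\to(\psi_\ddd,\psi_\ddd')$ a.s.\ uniformly on $[0,L]$ as $a\to\infty$. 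Using $\hat u_a'(x)=-a^{1/3}\hat u_a(x)+e^{-a^{1/3}x}u_a'(a^{-2/3}x)$ together with $u_a(a^{-2/3}x)=a^{-2/3}e^{a^{1/3}x}\hat u_a(x)$ (the identities already used in the proof of Proposition \ref{prop:sdelim}), one obtains the exact relation
\[
p^{(2a)}(a^{-2/3}L)=\frac{u_a'(a^{-2/3}L)}{u_a(a^{-2/3}L)}=a+a^{2/3}\,\frac{\hat u_a'(L)}{\hat u_a(L)},
\]
valid whenever $\hat u_a(L)\neq0$. Hence the pair of inequalities defining $\mathcal{A}^{(1)}_{L,a_1}$ is equivalent to requiring $\hat u_a'(L)/\hat u_a(L)\in[\tfrac45\sqrt L,\tfrac65\sqrt L]$ for all $a\geq a_1$.

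Next I would record the relevant behavior of the limiting function $\psi_\ddd$. By Proposition \ref{pr:Airy} applied to $\psi_\ddd$ there is an a.s.\ finite random time $T$ with $|\psi_\ddd'(t)/\psi_\ddd(t)-\sqrt t|\le t^{-1/4}\ln t$ for all $t\geq T$; combined with the growth bound (\ref{psibnd}), which forces $\psi_\ddd>0$ for all large $t$, this gives $\psi_\ddd>0$ on $[T,\infty)$. Fix once and for all a deterministic $L_0\ge 10$ such that $L^{-1/4}\ln L\le\tfrac1{10}\sqrt L$ for all $L\ge L_0$ (possible since $L^{-3/4}\ln L\to0$), and for $L\ge L_0$ set $\Omega_L=\{T\le L\}$. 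On $\Omega_L$ we have $\psi_\ddd(L)>0$ and $\psi_\ddd'(L)/\psi_\ddd(L)\in[\tfrac9{10}\sqrt L,\tfrac{11}{10}\sqrt L]$, and $P(\Omega_L)\to1$ as $L\to\infty$.

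Finally I would combine the two ingredients. Fix $L\ge L_0$. On $\Omega_L$, since $\hat u_a(L)\to\psi_\ddd(L)>0$ and $\hat u_a'(L)\to\psi_\ddd'(L)$ a.s.\ as $a\to\infty$, there is an a.s.\ finite random $\mathfrak a=\mathfrak a(\omega,L)$ such that for every $a\ge\mathfrak a$ both $\hat u_a(L)>0$ and $|\hat u_a'(L)/\hat u_a(L)-\psi_\ddd'(L)/\psi_\ddd(L)|\le\tfrac1{10}\sqrt L$ hold; the latter, together with $\psi_\ddd'(L)/\psi_\ddd(L)\in[\tfrac9{10}\sqrt L,\tfrac{11}{10}\sqrt L]$, gives $\hat u_a'(L)/\hat u_a(L)\in[\tfrac45\sqrt L,\tfrac65\sqrt L]$, so by the displayed identity $p^{(2a)}(a^{-2/3}L)$ lies in the interval prescribed in the lemma. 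Thus $\mathcal{A}^{(1)}_{L,a_1}\supseteq\Omega_L\cap\{\mathfrak a\le a_1\}$, and since $\{\mathfrak a\le a_1\}\uparrow\Omega_L$ as $a_1\to\infty$ we obtain $\lim_{a_1\to\infty}P(\mathcal{A}^{(1)}_{L,a_1})\ge P(\Omega_L)$; letting $L\to\infty$ and using $P(\Omega_L)\to1$ concludes the proof.

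The estimates themselves are soft once the displayed identity for $p^{(2a)}(a^{-2/3}L)$ is in hand. The only point requiring some care is the bookkeeping that turns the a.s.\ convergence of Proposition \ref{prop:sdelim}, stated along the continuous parameter $a\to\infty$, into the uniform-in-$a$ statement ``for all $a\ge a_1$'' with probability tending to one, and in particular the observation that $\hat u_a(L)$ stays bounded away from $0$ for all large $a$ (because its limit $\psi_\ddd(L)$ is positive on $\Omega_L$), so that the Riccati identity remains applicable throughout.
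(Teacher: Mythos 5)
Your proof is correct and follows essentially the same route as the paper: both proofs rest on the exact algebraic identity $p^{(2a)}(a^{-2/3}L)=a+a^{2/3}\,\hat u_a'(L)/\hat u_a(L)$ (which the paper records equivalently as $a^{-2/3}p(a^{-2/3}L)-a^{1/3}\to X(L)$), obtain the a.s.\ convergence of the right-hand side from Proposition \ref{prop:sdelim}, control $X(L)=\psi_\ddd'(L)/\psi_\ddd(L)$ via Proposition \ref{pr:Airy}, and then upgrade pointwise-in-$a$ a.s.\ convergence to the uniform ``for all $a\ge a_1$'' statement by monotonicity of events in $a_1$ and $L$. The only cosmetic difference is that the paper parameterises the good event directly as $\mathcal{A}_L=\{\tfrac{9}{10}\sqrt t\le X(t)\le\tfrac{11}{10}\sqrt t,\ \forall t\ge L\}$ while you phrase it through the stopping time $T$ of Proposition \ref{pr:Airy}; these coincide once you insist $L\ge L_0$, as you do.
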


\begin{proof}
The uniform convergence of Proposition \ref{prop:sdelim} implies that almost surely,
\begin{equation}\label{eq:unif_conv}
p(a^{-2/3} L) a^{-2/3}-a^{1/3}\to X(L)\,,\quad \mbox{as }a \to \infty\,.
\end{equation}
Indeed
\begin{align*}
(a^{2/3} \phi(a^{-2/3} t) e^{-a^{1/3} t}, \phi'(a^{-2/3} t) e^{-a^{1/3}t}-a \phi(a^{-2/3} t) e^{-a^{1/3} t})\to  (\psi(t), \psi'(t))\,,
\end{align*}
uniformly on $[0,L]$ and $p(t)=\phi'(t)/\phi(t)$ and  $X(t) = \psi'(t)/\psi(t)$. 

Fix $L$ large and define the event:
\[
\mathcal{A}_L:=\{\tfrac{9}{10}\sqrt{t} \leq X(t)\leq \tfrac{11}{10}\sqrt{t}, \quad\forall t\geq L\}.
\]
Note that the family $\mathcal{A}_L$ is non-decreasing in $L$. From Proposition \ref{pr:Airy} it follows that $\lim_{L\to\infty} \mathbb{P}(\mathcal{A}_L)=1$. For all $L$ and $a_1$, define
\[
\mathcal{A}_{L,a_1} = \mathcal{A}_L\cap\{a(1+\tfrac{4}{5}a^{-1/3}\sqrt{L})\leq p(a^{-2/3}L)\leq a(1+\tfrac{6}{5}a^{-1/3}\sqrt{L}),\quad \forall a\geq a_1\}.
\]
By (\ref{eq:unif_conv}) and the condition $\tfrac{9}{10}\sqrt{L}\leq X(L)\leq \tfrac{11}{10}\sqrt{L}$ on $\mathcal{A}_L$, we have $\mathcal{A}_{L,a_1}\uparrow\mathcal{A}_L$ as $a_1\to\infty$ which concludes the proof.
\end{proof}

Let us introduce $q = q^{(2a)} =\ln  p^{(2a)}-\ln  a$. By Lemma \ref{lem:controltimeL}, the diffusion $q$ is well-defined at time $a^{-2/3} L$ on the event $\mathcal{A}^{(1)}_{L,a_1}$.  By It\^o's formula, for  $t \geq a^{-2/3} L$ we have
\begin{align}
dq(t)=\tfrac{2}{\sqrt{\beta}}dB_{2a}(t)+a(2 -e^{q(t)} - e^{-t-q(t)}) dt\,. \label{SDE q}
\end{align}
The diffusion $q$ blows-up when $p$ reaches $0$, so $q$ may not be well-defined on the whole interval $[a^{-2/3} L,+\infty)$. 

The next proposition controls the growth of $q$ from small times starting at $a^{-2/3} L$ until a positive deterministic time. In this time-interval, $q$ is small and therefore $p$ is close to $a(1+q)$. Analyzing the drift of the $q$ diffusion  for small  $t$ and $q$, we see that  one can compare the behavior of $q$ with the diffusion  $X$ defined in (\ref{AirySDE}).  This allows us to bound  $q$ with constant multiples of the  square root function with large probability.

\begin{proposition}\label{Airy approx}
Fix $t_0:=1/8$. For all positive $L$ and $a_1$ with $a_1^{-2/3}L\leq t_0$, we define $\mathcal{A}^{(2)}_{L,a_1}$ to be the event  that
\begin{align}
\tfrac{2}{5} \sqrt{t} \leq q^{(2a)}(t) \leq \tfrac{7}{5} \sqrt{t}, \quad \forall t\in [a^{-2/3}L,t_0]\,\label{ineq_q_tillt0}\quad \text{ for all $a\geq a_1$.}
\end{align}
Then $\lim_{L \to \infty} \lim_{a_1 \to \infty} P\big(\mathcal{A}^{(2)}_{L,a_1}\big) = 1$. 
\end{proposition}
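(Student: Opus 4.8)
The idea is to reduce the statement, via the microscopic rescaling, to a uniform-in-$a$ analogue of the $\sqrt t$-asymptotics already proved for the Airy diffusion in Proposition~\ref{pr:Airy}. Set $r_a(x):=a^{1/3}q^{(2a)}(a^{-2/3}x)$. By It\^o's formula together with \eqref{sdehard} (equivalently, from \eqref{SDE q} after rescaling time by $a^{-2/3}$), $r_a$ solves on $x\in[L,a^{2/3}t_0]$ the perturbed Airy equation $dr_a=(x-r_a^2)\,dx+\tfrac2{\sqrt\beta}\,dB(x)+E_a(x,r_a)\,dx$, where a Taylor expansion of $a^{2/3}\bigl(2-e^{a^{-1/3}r}-e^{-a^{-2/3}x-a^{-1/3}r}\bigr)$ gives, on the band $r\in[\tfrac25\sqrt x,\tfrac75\sqrt x]$, the bound $|E_a(x,r)|\le\rho(t_0)\,x$ with a deterministic $\rho(t_0)$ satisfying $\rho(t_0)\to 0$ as $t_0\to 0$ (one uses $a^{-2/3}x\le t_0$ and $a^{-1/3}r\le\tfrac75\sqrt{t_0}$). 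Thus $r_a$ is an Airy-type diffusion whose drift perturbation is only a small fraction of the main drift, so its stable branch $r\approx\sqrt x$ is displaced to $r\approx\sqrt{x\,(1+O(\rho(t_0)))}$. The reverse passage is $q^{(2a)}(a^{-2/3}x)=\ln\bigl(1+a^{-1/3}r_a(x)\bigr)$; since $a^{-1/3}\sqrt x=\sqrt{a^{-2/3}x}=\sqrt t\le\sqrt{t_0}$, a bound $r_a(x)\in[\gamma_-\sqrt x,\gamma_+\sqrt x]$ with $\gamma_\pm=1+O(\rho(t_0))$ and $\gamma_-\le1\le\gamma_+$ converts, via $\ln(1+y)\ge y-\tfrac12 y^2$, into $q^{(2a)}(t)$ lying between $\gamma_-\sqrt t-\tfrac12 t$ and $\gamma_+\sqrt t$; an elementary check shows that for $t_0=\tfrac18$ this interval sits strictly inside $(\tfrac25\sqrt t,\tfrac75\sqrt t)$, with a margin that will absorb the small fluctuation discussed below. (This realizes the comparison with $X$ anticipated just before the statement.)

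It then remains to control $r_a$ on $[L,a^{2/3}t_0]$. The initial value is handled by Lemma~\ref{lem:controltimeL}: on the events there, $r_a(L)=a^{1/3}q^{(2a)}(a^{-2/3}L)$ is within $o_L(1)\sqrt L$ of $X(L)$ for all large $a\ge a_1$, hence in $[\tfrac9{10}\sqrt L,\tfrac{11}{10}\sqrt L]$, well inside the band. I would then reproduce, for the perturbed equation, the two-step scheme behind Proposition~\ref{pr:Airy}: first a ``getting-close'' step in the spirit of Lemma~\ref{l:Airy_close}, showing that from $x=L$ the process reaches an $O(x^{-1/4}\ln x)$-neighbourhood of the (displaced) stable branch within a bounded multiplicative increase of $x$ --- comparing on geometric blocks $[2^kL,2^{k+1}L]$ with the corresponding time-frozen diffusion and using that the drift $x-r^2+E_a$ points towards that branch with strength at least a fixed multiple of $x$ once $r$ leaves the neighbourhood; then a ``staying-close'' step in the spirit of Lemma~\ref{l:Airy_stays}, bracketing $r_a-\sqrt x$ between two explicitly solvable linear diffusions with feedback of order $-\sqrt x\,(\cdot)$ and checking that with high probability they remain within $\pm x^{-1/4}\ln x$ of $0$ on $[L,a^{2/3}t_0]$. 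The crucial point is that the fluctuation bounds in these steps are governed by the running variable $x$, not by $a$ (exactly as in Proposition~\ref{pr:Airy}), so the block-wise failure probabilities --- of order $e^{-c\beta L^{3/2}}$ near $x=L$ and decaying geometrically thereafter --- are bounded uniformly in $a$ and tend to $0$ as $L\to\infty$. The uniformity over all $a\ge a_1$ I would get as elsewhere in the paper: either by noting that, as $a_1\to\infty$, the intersection of the good events over $a\ge a_1$ increases to an event controlled by $X$ and the a.s.\ bound of Proposition~\ref{pr:Airy} (compare the proof of Lemma~\ref{lem:controltimeL}), or by running the block estimates along a sequence $a_n\uparrow\infty$ with summable failure probabilities, Borel-Cantelli, and interpolation to real $a$ through the a.s.\ continuity of $a\mapsto(r_a,r_a')$ on compacts. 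Putting these ingredients together gives $\lim_{L\to\infty}\lim_{a_1\to\infty}P(\mathcal{A}^{(2)}_{L,a_1})=1$.

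The main obstacle is to make every estimate uniform in $a$ on the growing window $[L,a^{2/3}t_0]$ while reconciling its two ends. Near $x=L$ the relevant band has width of order $\sqrt L$ in the $r$-variable, so one needs the fluctuation $x^{-1/4}\ln x$ to be much smaller there, i.e.\ $\ln L\ll L^{3/4}$ with enough room to swallow the constants --- this is why the outer $L\to\infty$ limit is indispensable. Near $x=a^{2/3}t_0$ the perturbation $E_a$ is no longer small compared with $\sqrt x$ in absolute terms, only as a fraction of the main drift, so one must verify that $t_0=\tfrac18$ is small enough that the displaced stable branch --- and hence, after the $\ln(1+a^{-1/3}r_a)$ transfer, the diffusion $q^{(2a)}$ itself --- stays strictly within the prescribed band $[\tfrac25\sqrt t,\tfrac75\sqrt t]$. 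Closing all the constants with the concrete choices $t_0=\tfrac18$ and band $[\tfrac25,\tfrac75]$ is where the real bookkeeping lies.
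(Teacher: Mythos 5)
The approach you outline is genuinely different from the paper's. You rescale a single process $r_a$ and view it as a perturbed Airy diffusion $dr_a=(x-r_a^2)\,dx+\tfrac{2}{\sqrt\beta}\,dB+E_a\,dx$, with the plan to re-run the getting-close / staying-close machinery of Proposition~\ref{pr:Airy} for that perturbed equation, uniformly in $a$. The paper instead \emph{sandwiches} $q^{(2a)}$ between two auxiliary diffusions $q_1,q_2$ whose drifts are the exact polynomial bounds
\[
-2q^2+\tfrac12 t\;\le\;2-e^q-e^{-t-q}\;\le\;-q^2+t\qquad(0\le q\le\tfrac12,\;t\le t_0),
\]
so that after the rescalings $y_1(x)=2a^{1/3}q_1(a^{-2/3}x)$ and $y_2(x)=a^{1/3}q_2(a^{-2/3}x)$ one obtains the \emph{unperturbed} Airy SDE $dy_i=(x-y_i^2)\,dx+c_i\,dB$ with different but $a$-independent noise coefficients. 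This is the key trick: the error terms from the Taylor expansion are not carried along as a perturbation but are absorbed into the multiplicative rescaling, so Proposition~\ref{pr:Airy} (and its underlying Lemmas~\ref{l:Airy_close}--\ref{l:Airy_stays}) applies verbatim with no further work, and the resulting events are automatically uniform in $a$ because the rescaled SDEs no longer depend on $a$; only the starting values $y_i(L)$ do, and those are fixed in a compact window by Lemma~\ref{lem:controltimeL}. Your route dodges none of this and instead commits to reproving Lemmas~\ref{l:Airy_close} and~\ref{l:Airy_stays} for the perturbed equation with uniform-in-$a$ constants, which is exactly the "real bookkeeping" you defer at the end --- and the delicate point you flag near $x=a^{2/3}t_0$, where $E_a$ is only a fraction of the main drift, is precisely what the paper's two-sided Taylor bound is engineered to handle without any perturbative analysis.

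Two smaller remarks. First, there is an internal inconsistency in your proposal: with your definition $r_a(x)=a^{1/3}q^{(2a)}(a^{-2/3}x)$ the reverse passage is simply $q^{(2a)}(a^{-2/3}x)=a^{-1/3}r_a(x)$ (linear), not $\ln(1+a^{-1/3}r_a(x))$. The logarithmic formula would instead correspond to rescaling $p-a$, i.e.\ $r_a=a^{1/3}(e^{q}-1)$; as written, the $\ln(1+y)\ge y-\tfrac12 y^2$ manipulation is spurious. Second, your proposal also leans on the perturbation bound $|E_a|\le\rho(t_0)x$, which is correct (on the band one gets $\rho(t_0)=O(\sqrt{t_0})$), but actually establishing that the perturbed stable branch lies in $[\tfrac25\sqrt t,\tfrac75\sqrt t]$ for $t_0=\tfrac18$ requires tracking all the hidden constants in the perturbed analogues of Lemmas~\ref{l:Airy_close}--\ref{l:Airy_stays}, whereas the paper's sandwich lands directly in the target band via the explicit factor-of-two between $y_1$ and $y_2$. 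So the proposal is plausible in outline but leaves a nontrivial amount of work unaddressed that the paper's cleverer comparison makes unnecessary.
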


Note that the inequality \eqref{ineq_q_tillt0} implies
\[
p^{(2a)}(t)\geq a(1+\tfrac{2}{5}\sqrt{t}), \quad \forall t\in [a^{-2/3}L,t_0]\,\quad \text{ for all $a\geq a_1$.}
\] 

\begin{proof}
If $a_1>(8 L)^{3/2}$ then on the event $\mathcal{A}^{(1)}_{L,a_1}$ of Lemma \ref{lem:controltimeL}, we have
\[
\tfrac{3}{5} \sqrt{L}\le  a^{1/3}q(a^{-2/3} L)\le \tfrac{6}{5} \sqrt{L}, \qquad \text{ for all $a \geq a_1$.}
\] 
For  $0\leq q \le 1/2, t\le t_0$ we have the following inequalities:
\[
-q^2+t\ge 2 -e^q - e^{-t-q}=2-e^q-e^{-q}+e^{-q}(1-e^{-t})\ge -2 q^2+\tfrac12 t\,.
\]
Let $q_1=q_{1}^{(2a)}$ and $q_2=q_{2}^{(2a)}$ be the diffusions on $[a^{-2/3}L,t_0]$ so that
\[
d q_1(t)=\tfrac{2}{\sqrt{\beta}}  dB_{2a}(t)+a(\tfrac12  t -2 q_1(t)^2) dt, \quad d q_2(t)=\tfrac{2}{\sqrt{\beta}}  dB_{2a}(t)+a(t-q_2(t)^2) dt\,,
\]
with $q_1(a^{-2/3}L)=q_2(a^{-2/3}L)=q(a^{-2/3}L)$. Then the coupling $\{q_1(t)\leq q(t)\leq q_2(t)\}$ holds on the event $\{0\leq q_1(t)\leq q_2(t)\leq 1/2,\forall t\in[a^{-2/3}L,t_0]\}$.

Recall that  $B_{2a}(t)=a^{-1/3} B(a^{2/3} t)$. Setting $y_1(t)= 2\,a^{1/3} q_1(a^{-2/3} t)$ and  $y_2(t)= a^{1/3} q_2(a^{-2/3} t)$, we get
\[
dy_1(t)=\tfrac{4}{\sqrt{\beta}} dB(t)+(t- y_1(t)^2) dt, \quad dy_2(t)=\tfrac{2}{\sqrt{\beta}} dB(t)+( t-y_2(t)^2) dt, 
\]
with $\frac{6}{5}\sqrt{L}\le y_1(L)\le \frac{12}{5}\sqrt{L}$ and $\frac35 \sqrt{L}\le y_2(L)\le \frac{6}{5}\sqrt{L}$. Thanks to Proposition \ref{pr:Airy}, we know that the event
\begin{align}\label{event1234}
\left\{\forall t \geq L,\quad y_1(t) \in [\tfrac{4}{5}\sqrt{t}, \tfrac{13}{5}\sqrt{t}],\quad y_2(t) \in [\tfrac12 \sqrt{t}, \tfrac75 \sqrt{t}]\right\}
\end{align}
has probability going to $1$ when $L \to \infty$. On the event (\ref{event1234}) we have
\[
0\leq \tfrac{2}{5}\sqrt{t}\leq q_1(t)\leq q(t)\leq q_2(t)\leq \tfrac75 \sqrt{t}\leq \tfrac12, \quad \forall t\in[a^{-2/3}L,t_0],
\]
implying that $p(t)\geq a(1+\frac{2}{5}\sqrt{t})$ on $[a^{-2/3}L, t_0]$.
\end{proof}

Next we estimate the growth of $q(t)$ in the time interval $t\in[t_0,\infty)$. As we will see, $q$ will have a different behavior for large times:  it oscillates near the value $\ln  2$ with possibly making  large excursions away from this value. We will prove  bounds on those fluctuations using a comparison with a non-exploding, stationary version of the diffusion $q$.

\begin{proposition}\label{Log bound} Recall the definition of $\mathcal{A}^{(2)}_{L,a_1}$ from Proposition \ref{Airy approx}. Define 
\[
\mathcal{A}^{(3)}_{L,a_1}  = \mathcal{A}_{L,a_1}^{(2)}\cap \{-a^{-1/6}\ln  t \leq q^{(2a)}(t)\le c+ a^{-1/6} \ln  t, \forall t\geq t_0, \forall a\geq a_1\}\,.
\]
Then, there exists a constant $c>0$ such that
$\lim_{L \to \infty} \lim_{a_1 \to \infty} P\big(\mathcal{A}^{(3)}_{L,a_1}\big) = 1$. 
\end{proposition}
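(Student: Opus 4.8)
The plan is to analyze the diffusion $q=q^{(2a)}$ on $[t_0,\infty)$ by squeezing it between time-homogeneous diffusions whose scale functions and stationary measures are explicit, and then to control its running extrema over the dyadic time blocks $[2^n,2^{n+1}]$ by a Borel--Cantelli argument. Throughout we work on the event $\mathcal{A}^{(2)}_{L,a_1}$ of Proposition~\ref{Airy approx}; in particular it forces $q(t_0)$ to lie in the fixed compact interval $[\tfrac25\sqrt{t_0},\tfrac75\sqrt{t_0}]\subset(0,\ln 2)$, uniformly in $a\ge a_1$. Fix a constant $c>\ln 2$ (to be chosen, e.g.\ $c=2$) and, for each $a$, let $\tau_a=\inf\{t\ge t_0:\ q(t)\notin(-a^{-1/6}\ln t,\,c+a^{-1/6}\ln t)\}$; on $[t_0,\tau_a)$ the diffusion does not explode and satisfies \eqref{SDE q}. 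It suffices to show that, on $\mathcal{A}^{(2)}_{L,a_1}$, the probability $P(\tau_a<\infty\text{ for some }a\ge a_1)$ tends to $0$ when first $a_1\to\infty$ and then $L\to\infty$; since $\lim_{L}\lim_{a_1}P(\mathcal{A}^{(2)}_{L,a_1})=1$, this gives $\lim_L\lim_{a_1}P(\mathcal{A}^{(3)}_{L,a_1})=1$.

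For the \emph{upper bound} we use that the drift of $q$ is at most $a(2-e^q)$, since $-ae^{-t-q}\le 0$. By the comparison theorem for one-dimensional SDEs driven by the common Brownian motion $B_{2a}$, we get $q(t)\le\tilde q(t)$ on $[t_0,\tau_a)$, where $\tilde q$ is the non-exploding time-homogeneous diffusion $d\tilde q=\tfrac{2}{\sqrt\beta}dB_{2a}+a(2-e^{\tilde q})dt$ with $\tilde q(t_0)=q(t_0)$ (the diffusion already mentioned before Proposition~\ref{prop:Ibnd}). Its stationary density is proportional to $\exp(-\tfrac{\beta a}{2}(e^x-2x))$ and its scale density is $s'(x)=\exp(-\tfrac{\beta a}{2}(2x-e^x+1))$, so the right tail is doubly exponentially thin. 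Using $s'$ to bound the probability that $\tilde q$, started below $\ln 2$, reaches level $c+a^{-1/6}n\ln 2$ before returning to $\ln 2-1$, together with the fact that $\tilde q$ relaxes on time scale $O(1/a)$, one gets that $P\big(\sup_{[2^n,2^{n+1}]}\tilde q> c+a^{-1/6}n\ln 2\big)$ is at most $a\,2^{n+1}\exp(-c'\,a\,2^{a^{-1/6}n})$. This is summable in $n$, with sum $\to 0$ as $a_1\to\infty$ uniformly in $a\ge a_1$, and $\{q(t)>c+a^{-1/6}\ln t\text{ for some }t\ge t_0\}\subset\bigcup_n\{\sup_{[2^n,2^{n+1}]}q>c+a^{-1/6}n\ln 2\}$.

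For the \emph{lower bound} no global comparison is available, because the term $-ae^{-t-q}$ makes the drift of $q$ very negative for $q$ very negative. Instead we exploit a strong restoring drift near the bottom of the strip: a short case analysis (using $e^q\le e^{1/16}$, and via $q\ge-a^{-1/6}\ln t$ the estimate $e^{-t-q}\le e^{-t}t^{a^{-1/6}}\le e^{-t_0}t_0^{a^{-1/6}}$ for $t\ge t_0$) gives a constant $c_\ast>0$ with $a(2-e^q-e^{-t-q})\ge c_\ast a$ whenever $t\ge t_0$ and $-a^{-1/6}\ln t\le q\le\tfrac1{16}$, once $a$ is large. Since $q(t_0)\ge\tfrac25\sqrt{t_0}>\tfrac1{16}$ and $-a^{-1/6}\ln t<\tfrac1{16}$ for all $t\ge t_0$ and $a$ large, in order to exit the strip from below $q$ must first hit $\tfrac1{16}$; from the last such time on it dominates a Brownian motion with drift $c_\ast a$ started at $\tfrac1{16}$, so the formula $P(\inf_{t\ge 0}(\tfrac2{\sqrt\beta}W_t+c_\ast a\,t)\le-y)=e^{-\beta c_\ast a y/2}$ bounds the probability that $q$ drops below $-a^{-1/6}n\ln 2$ during $[2^n,2^{n+1}]$ by $e^{-c'a}\,e^{-\tfrac{\beta c_\ast\ln 2}{2}a^{5/6}n}$, again summable in $n$ with sum $\to 0$ as $a_1\to\infty$, uniformly in $a\ge a_1$.

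Combining the two one-sided block estimates and summing over $n$ shows $P(\tau_a<\infty\mid\mathcal{A}^{(2)}_{L,a_1})$ is small for each $a\ge a_1$; upgrading this to ``$\tau_a=\infty$ for all $a\ge a_1$ simultaneously'' uses the a.s.\ continuity of $a\mapsto q^{(2a)}(\cdot)$ (inherited from $B_{2a}(t)=a^{-1/3}B(a^{2/3}t)$) to pass to a grid of parameters, the summability and integrability in $a$ of the failure probabilities making the resulting union bound effective. The two genuine difficulties are: (i) the time-inhomogeneous term $-ae^{-t-q}$, which rules out a single clean SDE comparison for the lower bound and forces the two bounds to be run jointly through the exit time $\tau_a$; and (ii) the passage from a fixed-$a$ estimate to one uniform over the continuum $\{a\ge a_1\}$, which is the same technical point already met in Lemma~\ref{lem:controltimeL} and Proposition~\ref{Airy approx}.
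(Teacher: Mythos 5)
Your fixed-$a$ estimates are plausible in shape (the scale-function/excursion bound for the upper barrier and the drifted-Brownian bound for the lower one can be made to work for each single $a$, modulo standard care with the ``from the last such time'' step, which is not a stopping time, and with the fact that conditioning on $\mathcal{A}^{(2)}_{L,a_1}$ — an event that quantifies over all $a\ge a_1$ and hence depends on the whole path of $B$ — must be replaced by a Markov-property argument at time $a^{2/3}t_0$). The genuine gap is the very last step: upgrading from ``for each fixed $a\ge a_1$ the strip is respected with probability $1-\eps(a)$'' to ``the strip is respected simultaneously for the continuum of all $a\ge a_1$''. You propose a grid of parameters plus ``a.s.\ continuity of $a\mapsto q^{(2a)}(\cdot)$'', but no quantitative control in $a$ is given, and none is easy to get here: the required bound must hold on the unbounded time interval $[t_0,\infty)$, there is no monotonicity in $a$, and the processes $q^{(2a)}$ for nearby $a$ are driven by different (only correlated) noises $B_{2a}$, so continuity in $a$ uniformly in $t\ge t_0$ is itself a nontrivial claim that your argument does not establish. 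The analogous uniformity in Lemma \ref{lem:controltimeL} and Proposition \ref{Airy approx} is obtained from the stochastic-flow convergence of Proposition \ref{prop:sdelim}, which only works on compact time intervals $[0,L]$, so it cannot be invoked for $t\in[t_0,\infty)$. As written, the union bound over a grid does not control what happens between grid points, and this is precisely the hard point of the statement.

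The paper's proof is organized so that this issue never arises: it sandwiches $q$ between two time-homogeneous diffusions $q_1,q_2$ with drifts $a(c_i-e^{q_i})$, $c_1=2-e^{-t_0}$, $c_2=2$, started at $q(t_0)$ and driven by the same $B_{2a}$ (the comparison $q_1\le q\le q_2$ being valid as long as $q_1(t)\ge -t+t_0$), and then uses that $e^{-q_i}$ solves a linear equation with an explicit pathwise formula. Plugging the single Brownian-modulus event $\{C_1<a_1^{1/6}\}$ from Lemma \ref{BM fluctuation} into that formula yields the deterministic bounds $-a^{-1/6}\ln t\le q(t)\le 1+a^{-1/6}\ln t$ simultaneously for every $a\ge a_1$, with no per-$a$ probability estimate and hence no union bound over $a$ at all. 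If you want to salvage your route, you would either need a quantitative modulus of continuity in $a$ valid uniformly on $[t_0,\infty)$ (which essentially forces you back to pathwise estimates), or to recast your hitting-time bounds as deterministic consequences of a single event about $B$, as the paper does.
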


\begin{proof}
	For each $a$, we bound $q(t)$ using two stationary diffusions $q_1(t)=q_1^{(2a)}(t)$ and $q_2(t)=q_2^{(2a)}(t)$, and we show that the growth of $q_1,q_2$ is at most logarithmic with  a large probability.  Let $q_1$ and $q_2$ be the following diffusions:
	\[
	dq_1(t)=\tfrac{2}{\sqrt{\beta}}dB_{2a}(t) + a(c_1-e^{q_1(t)})dt,\quad dq_2(t)=\tfrac{2}{\sqrt{\beta}}dB_{2a}(t) + a(c_2-e^{q_2(t)})dt,
	\]
	with $c_1=2-e^{-t_0}, c_2=2$, and $q_1(t_0)=q_2(t_0)=q(t_0)$. Comparing the drift terms of $q, q_1, q_2$ we see that 
	 the event $\{q_1(t)\geq -t + t_0,\forall t\geq t_0\}$ implies the event  $\{q_1(t)\leq q(t)\leq q_2(t), \forall t\geq t_0\}$.
	
	Notice that the SDEs for $q_{i}$ for $i=1,2$ can be solved. We get that for $t\geq t_0$, $i=1,2$,
	\begin{align*}
	\exp(-q_i(t)) &= \exp(-q_i(t_0)) \exp\Big(a\,c_i(t_0-t)+\tfrac{2}{\sqrt{\beta}}(B_{2a}(t_0)-B_{2a}(t))\Big)\\
	&\quad +a\int_{t_0}^t \exp\Big(a\,c_i(s-t)+\tfrac{2}{\sqrt{\beta}}(B_{2a}(s)-B_{2a}(t))\Big)ds.
	\end{align*}
	Recall that $B_{2a}(t)=a^{-1/3}B(a^{2/3}t)$. Applying the bound (\ref{BM_modulus}) of Lemma \ref{BM fluctuation} on the event $\{C_1 < a_1^{1/6}\}$ for the Brownian motion $B$, we have the following inequality for $x\geq a^{-2/3}L$, $L \geq 10$ and for all $a\ge a_1$:
	\begin{align}
 	\tfrac{2}{\sqrt{\beta}}|B_{2a}(x+h)-B_{2a}(x)| 
	\leq C_1a^{-1/3}(a^{2/3}h+\ln  (a^{2/3}x))
	\leq a^{1/2}h+a^{-1/6}\ln  (a^{2/3}x).\label{pfeqB2abound}
	\end{align}
Note that this is exactly inequality \eqref{eq:B2abound} of Proposition \ref{prop:phibnd}.
	
	Moreover, on $\mathcal{A}^{(2)}_{L,a_1}$, for $a \geq a_1$, we have $\exp(q(t_0))\geq \exp(2\sqrt{t_0}/5)> c_1$. We get that there is an absolute constant $c_3>0$ so that 	for all $a \geq a_1\ge c_3$ we have
	\begin{align*}
	e^{-q_1(t)}
	&\leq \exp\big(-q(t_0)+(ac_1-a^{1/2})(t_0-t)+a^{-1/6}\ln (a^{2/3}t_0)\big)\\
	&\quad +\exp\big(a^{-1/6}\ln (a^{2/3}t)\big)(c_1-a^{-1/2})^{-1}\Big(1-\exp\big((ac_1-a^{1/2})(t_0-t)\big)\Big)\\	
	&\leq e^{a^{-1/6}\ln (a^{2/3}t)}\left((c_1-a^{-1/2})^{-1}+e^{(ac_1-a^{1/2})(t_0-t)}(e^{-q(t_0)}-(c_1-a^{-1/2})^{-1}) \right)\\
	&\leq t^{a^{-1/6}}\,.
	\end{align*}
	We conclude that for all $a \geq a_1\ge c_3$ we have
	\[
	q_1(t)\geq -a^{-1/6}\ln  t \geq -t+t_0, \quad\forall t\geq t_0,
	\]
	which also implies that the coupling $q_2(t)\geq q(t)\geq q_1(t)$ holds on $\{C_1 < a_1^{1/6}\} \cap \mathcal{A}^{(2)}_{L,a_1}$.
		
	For the upper bound, first note that $\exp(q(t_0))<e^{1/2}<c_2=2$ on $\mathcal{A}^{(2)}_{L,a_1}$. Then there is an absolute constant $c_4>0$, so that 
	for all  $a \geq a_1\ge c_4$ and $t \geq t_0$, we have
	\begin{align*}
	e^{-q_2(t)}&\geq e^{-a^{-1/6}\ln (a^{2/3}t)}\left( (c_2+a^{-1/2})^{-1}+e^{(ac_2+a^{1/2})(t_0-t)}(e^{-q(t_0)}-(c_2+a^{-1/2})^{-1})\right)\\
	&\geq e^{-a^{-1/6}\ln  a^{2/3}-q(t_0)}t^{-a^{-1/6}}\,.
	\end{align*}
	Therefore, we deduce
	\[
	-a^{-1/6}\ln  t\leq q(t)\leq a^{-1/6}\ln  t + 1, \quad \forall t\geq t_0
	\]
	on the event  $\{C_1 < a_1^{1/6}\} \cap \mathcal{A}^{(2)}_{L,a_1}$ 	
	for all  $a \geq a_1\ge c_5$ with a fixed $c_5>0$, which completes the proof of  the proposition. \end{proof}

Now we are ready to complete the proof of  Proposition \ref{prop:phibnd}.

\begin{proof}[Proof of  Proposition \ref{prop:phibnd}]
The statement follows from Propositions  \ref{Airy approx} and \ref{Log bound}, and the inequality \eqref{pfeqB2abound}.
\end{proof}

\begin{remark}\label{rmk:qfluctuaction}
A more careful analysis of the diffusion $\phi^{(2a)}_\ddd$ (using ideas described in the proofs of  Lemma \ref{Airy_close} and Lemma \ref{Airy approx}) can provide a logarithmic bound on the diffusion $q$ for a fixed $a>0$. 
%
More precisely, it can be shown that for a fixed $a>1/2$ with probability one the diffusion $q$ satisfies $|q(t)| \leq  \frac{2(32)^2}{\beta \,a} \ln t$ for all large $t$. In particular, this result implies that $\phi_\ddd:= \phi^{(2a)}_\ddd$ is a.s. not in $L^2(\R_+,m_{2a})$ for $a > 1/2$ thanks to the identities (\ref{id:qdiffusion}) and 
	\begin{align*}
	\phi_\ddd(t)^2 m_{2a}(t) = \phi_\ddd(t_0)^2 \exp(2 \,a \int_{t_0}^t e^{q(s)} ds) \exp(- (2a +1) t - \frac{2}{\sqrt{\beta}}B_{2a}(t)).
	\end{align*}
\end{remark}

\noindent\textbf{Acknowledgements.} 
The authors thank Cyril Labb\'e, Brian Rider and B\'alint Vir\'ag for valuable discussions. LD and BV thank the hospitality of Centre International de Rencontres Math\'ematiques in Marseille where part of this work was originated. This research was partially supported by the ANR-16-CE93-0003 (LD) and the NSF award DMS-1712551 (BV).

\def\cprime{$'$}

\Addresses

\begin{thebibliography}{10}

\bibitem{BhatiaElsner}
Rajendra Bhatia and Ludwig Elsner.
\newblock The {Hoffman-Wielandt} inequality in infinite dimensions.
\newblock {\em Proceedings of the Indian Academy of Sciences - Mathematical
  Sciences}, 104(3):483--494, Aug 1994.

\bibitem{AlexPhD}
Alexander Bloemendal.
\newblock {\em Finite rank perturbations of random matrices and their continuum
  limits}.
\newblock PhD thesis, University of Toronto, 2011.

\bibitem{BF}
Alexei Borodin and Peter~J Forrester.
\newblock Increasing subsequences and the hard-to-soft edge transition in
  matrix ensembles.
\newblock {\em Journal of Physics A: Mathematical and General}, 36(12):2963,
  2003.

\bibitem{DMT}
Percy~A. Deift, Govind Menon, and Thomas Trogdon.
\newblock On the condition number of the critically-scaled {Laguerre} unitary
  ensemble.
\newblock {\em Discrete and Continuous Dynamical Systems}, 36(8):4287--4347,
  Mar 2016.

\bibitem{DL}
Laure Dumaz and Cyril Labb{\'e}.
\newblock Localization of the continuous {Anderson Hamiltonian} in 1-d.
\newblock {\em Probability Theory and Related Fields}, pages 1--67, 2017.

\bibitem{DL2}
Laure Dumaz and Cyril Labb{\'e}.
\newblock The stochastic {Airy} operator at large temperature.
\newblock {\em {arXiv preprint \texttt{arXiv:1908.11273}}}, 2019.

\bibitem{ForBook}
Peter~J. Forrester.
\newblock {\em Log-gases and random matrices}.
\newblock Princeton University Press, Princeton, NJ, 2010.

\bibitem{JV}
St{{\'e}}phanie Jacquot and Benedek Valk{{\'o}}.
\newblock Bulk scaling limit of the {L}aguerre ensemble.
\newblock {\em Electron. J. Probab.}, 16:no. 11, 314--346, 2011.

\bibitem{MP}
Vladimir~A. Mar{\v{c}}enko and Leonid~A. Pastur.
\newblock Distribution of eigenvalues in certain sets of random matrices.
\newblock {\em Mat. Sb. (N.S.)}, 72 (114):507--536, 1967.

\bibitem{Minami}
Nariyuki Minami.
\newblock {Definition and Self-Adjointness of the Stochastic Airy Operator}.
\newblock {\em {Markov Processes and Related Fields}}, 21(3):695--711, 2015.

\bibitem{Protter}
Philip~E. Protter.
\newblock {\em Stochastic differential equations}.
\newblock Springer, 2005.

\bibitem{RR}
Jos{{\'e}}~A. Ram{\'{\i}}rez and Brian Rider.
\newblock Diffusion at the random matrix hard edge.
\newblock {\em Comm. Math. Phys.}, 288(3):887--906, 2009.

\bibitem{RR_err}
Jos{{\'e}}~A. Ram{\'{\i}}rez and Brian Rider.
\newblock Erratum to: {D}iffusion at the random matrix hard edge.
\newblock {\em Comm. Math. Phys.}, 307(2):561--563, 2011.

\bibitem{RR_spike}
Jos{{\'e}}~A. Ram{\'{\i}}rez and Brian Rider.
\newblock Spiking the random matrix hard edge.
\newblock {\em Probability Theory and Related Fields}, 169(1):425--467, 2017.

\bibitem{RRV}
Jos{{\'e}}~A. Ram{\'{\i}}rez, Brian Rider, and B{{\'a}}lint Vir{{\'a}}g.
\newblock Beta ensembles, stochastic {A}iry spectrum, and a diffusion.
\newblock {\em J. Amer. Math. Soc.}, 24(4):919--944, 2011.

\bibitem{RY}
Daniel Revuz and Marc Yor.
\newblock {\em Continuous martingales and Brownian motion}, volume 293.
\newblock Springer Science \& Business Media, 2013.

\bibitem{Teschl}
Gerald Teschl.
\newblock {\em Mathematical methods in quantum mechanics}, volume 157.
\newblock American Mathematical Soc., 2014.

\bibitem{BVBV}
Benedek Valk\'o and B\'alint Vir\'ag.
\newblock Continuum limits of random matrices and the {B}rownian carousel.
\newblock {\em Inventiones Math.}, 177:463--508, 2009.

\bibitem{Weidmann}
Joachim Weidmann.
\newblock {\em Spectral theory of ordinary differential operators}, volume 1258
  of {\em Lecture Notes in Mathematics}.
\newblock Springer-Verlag, Berlin, 1987.

\end{thebibliography}
\end{document}